\documentclass[11pt]{amsart}
\usepackage{amsmath}
\usepackage{bbm}
\usepackage{mathrsfs}
\parskip4pt plus2pt minus2pt
\usepackage[english]{babel}
\usepackage[utf8x]{inputenc}
\usepackage[T1]{fontenc}

\usepackage[top=3cm,bottom=2cm,left=2.5cm,right=2.5cm,marginparwidth=1.75cm]{geometry}

\usepackage{amsmath}
\usepackage{graphicx}
\setlength{\marginparwidth}{2cm}
\usepackage[colorinlistoftodos]{todonotes}
\usepackage[colorlinks=true, allcolors=blue]{hyperref}


\newtheorem{thm}{Theorem}[section]
\newtheorem{prop}[thm]{Proposition}
\newtheorem{cor}[thm]{Corollary}
\newtheorem{lemma}[thm]{Lemma}
\newtheorem{defi}{Definition}

\theoremstyle{remark} 
\newtheorem{remark}[]{Remark}


\usepackage{mathtools} 
\mathtoolsset{showonlyrefs,showmanualtags}
\numberwithin{equation}{section}
\usepackage{amsmath}


\newcommand{\e}{\varepsilon}

\newcommand{\U}{\mathcal{U}}
\newcommand{\Pc}{\mathcal{P}}
\newcommand{\G}{\mathcal{G}}
\newcommand{\M}{M}
\newcommand{\N}{\mathcal{N}}

\newcommand{\hB}{\hat{B}}
\newcommand{\hG}{\hat{\mathcal{G}}}

\newcommand{\p}{\partial}
\newcommand{\Omegabad}{\Omega_{x,R_j,n}}

\newcommand{\EE}{\mathbb{E}}
\newcommand{\R}{\mathbb{R}}

\newcommand{\NN}{\mathbb{N}}
\newcommand{\PP}{\mathbb{P}}

\newcommand{\Vol}{\mathrm{Vol}}
\newcommand{\B}{\hat{B}}

\newcommand{\be}{\begin{equation}}
\newcommand{\ee}{\end{equation}}

\title[Topologies of random geometric complexes]{Topologies of random geometric complexes on Riemannian manifolds 
in the thermodynamic limit}
\author{Antonio Auffinger, Antonio Lerario, Erik Lundberg}

\begin{document}

\begin{abstract}
We investigate the topologies
of random geometric complexes
built over
random points sampled on Riemannian manifolds
in the so-called ``thermodynamic'' regime.
We prove the existence of 
universal limit laws for the topologies;
namely, the random normalized counting measure of connected components (counted according to homotopy type) is shown to converge in probability to a deterministic probability measure.
Moreover, we show that the support of the
deterministic limiting measure 
equals the set of all homotopy types for Euclidean connected geometric complexes of the same dimension as the manifold.

\end{abstract}
\maketitle

\section{Introduction}

Sarnak and Wigman \cite{SarnakWigman} recently established, utilizing methods developed by Nazarov and Sodin \cite{NazarovSodin},
the existence of universal limit laws for the topologies of nodal sets of random band-limited functions on Riemannian manifolds. In the current paper, we adapt these methods to the setting of random geometric complexes, that is,
simplicial complexes with vertices
arising from a random point process
and faces determined by distances between vertices.

Kahle \cite{Kahle2011} made the first extensive investigation into the topology of random geometric complexes  generated by a point process in Euclidean space (zero-dimensional homology of random geometric graphs were also investigated earlier in \cite{Penrose}).
The expectation of each Betti number is studied within three main phases or regimes based on the relation between density of points and radius of the neighborhoods determining the complex: the subcritical regime (or ``dust phase'') where there are many connected components with little topology, the critical regime (or ``thermodynamic regime'') where topology is the richest (and where the percolation threshold appears), and the supercritical regime where the connectivity threshold appears.
The thermodynamic regime is seen to have the most intricate topology.
Many cycles of various dimensions begin to form as we enter this regime and many cycles become boundaries as we leave this regime.

Random geometric complexes on Riemannian manifolds were studied earlier in the influential work \cite{NSW} of Niyogi, Smale, and Weinberger,
where the manifold is embedded in Euclidean space and the distance between vertices is given by the ambient Euclidean distance.\footnote{
In the current paper, where our manifold is not necessarily embedded, we use geodesic distance to build the complexes.  If the manifold happens to be embedded, it can be seen, using Lemma \ref{lemma:convex}, that the same
limit law stated in Theorem \ref{thm:main1} holds when using the ambient Euclidean distance.}
The main question in \cite{NSW}
is motivated by applications in ``manifold learning'' and concerns the recovery of the topology of a manifold via a random sample of points on the manifold.  Consequently, the authors only consider a certain window within the supercritical regime.  The subsequent study \cite{BobrowskiMukherjee} includes the thermodynamic regime where they provide upper and lower bounds of the same order of growth for each Betti number.

Yogeshwaran, Subag, and Adler \cite{YSA} established limit laws (including a central limit theorem) in the thermodynamic regime for Betti numbers of random geometric complexes built over Poisson point processes in Euclidean space.
More recently, Goel, Trinh, and Tsunoda \cite{GoTrTs} established a limit law in the thermodynamic regime for Betti numbers
of random geometric complexes built over (possibly inhomogeneous) Poisson point processes in Euclidean space, where they also addressed the case when the point process is supported on a submanifold.

Hiraoka, Shirai, and Trinh \cite{HiShTr} proved a limit law for so-called ``persistent'' Betti numbers.  Although this goes in a rather separate direction motivated by topological data analysis, the formalism they use for describing the convergence of the persistence diagram has a loose resemblance to the setup of the current paper in that they introduce a sequence of random measures and show that it  converges in an appropriate sense to a deterministic measure.

A survey of other results on random 
geometric complexes is provided in \cite{BobrowskiKahle}.
Most progress in this area has been made only recently, but the problem of studying the topology of a random geometric complex (or equivalently the $\e$-neighborhood of a random point cloud) can be traced back to one of Arnold's problems (see the historical note at the end of the introduction).

A novelty of the current paper is that, whereas previous studies of random geometric complexes have focused on Betti numbers, we consider enumeration of connected components according to homotopy type,
a count that provides more refined topological information.

\subsection{The Riemannian case} Let $(M, g)$ be a compact Riemannian manifold of dimension $\dim(M)=d$, with normalized volume form $\Vol(M)=1$. Let $U_n=\{p_1, \ldots, p_n\}$ be a random set of points independently sampled from the uniform distribution on $M$. We denote by $\B(x, r)$ the Riemannian ball\footnote{In this paper we adopt the convention that when an object is denoted with a ``hat'' sign, then it is related to $\M.$ Analogous objects related to Euclidean space will have no ``hat''. For example a ball in $\M$ is denoted by $\B(x, r)$ and a ball in $\R^d$ by $B(x, r)$.} centered at $x\in \M$ of radius $r>0$. We fix a positive number $\alpha>0$ and build the random set:
\be\label{eq:uniform} \U_n=\bigcup_{k=1}^n \B(p_k, \alpha n^{-1/d}).\ee
We denote by $\check{C}(\U_n)$ the corresponding Cech complex (which for $n>0$ large enough, is homotopy equivalent to $\U_n$ itself, see Lemma \ref{lemma:convex} below).

Let now $\hat{\G}$ be the set of equivalence classes of $M$-geometric, connected simplicial complexes, up to homotopy equivalence (observe that this is a countable set). In other words, $\hat{\G}$ consists of all the connected simplicial complexes that arise as Cech complexes of some finite family of balls in $\M$. Note that different manifolds give rise to different sets $\hat{\G}$. For example, among all $\R^d$-geometric complexes we cannot find complexes with nonzero $d$-th Betti number; but if $M=S^d$, such complexes belong to $\hat{\G}$. When $M=\R^d$ we simply denote this set by $\G$.

\begin{figure}
\includegraphics[width=0.6\textwidth]{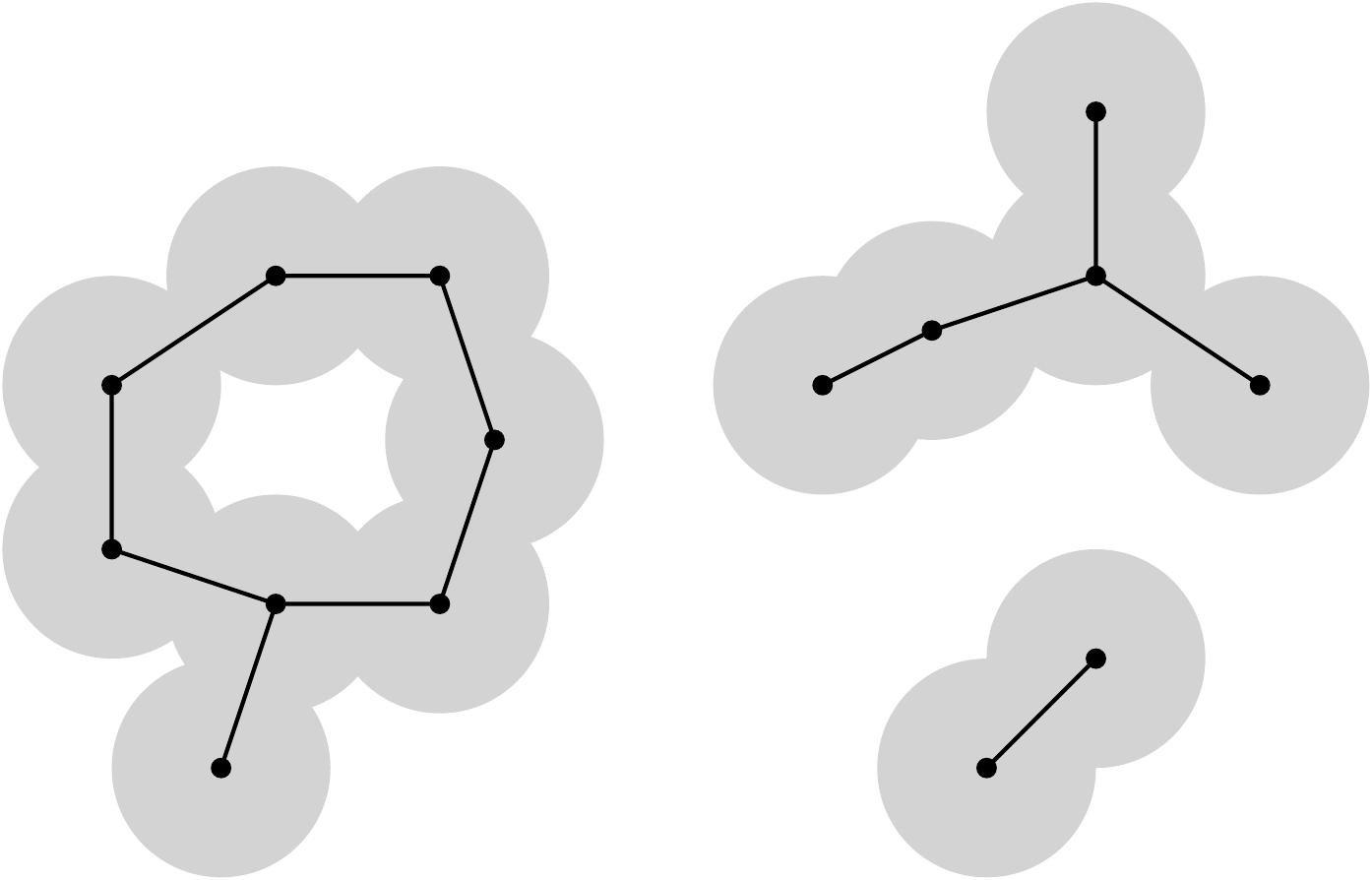}
	\caption{A geometric complex in the plane which is homotopy equivalent to $S^1\cup\{p_1\}\cup \{p_2\}$. The corresponding measure on the set of homotopy classes of connected, simplicial complexes is:
   $ \hat{\mu}=\frac{1}{3}\left(\delta_{[S^1]}+2\delta_{[\textrm{pt}]}\right).$ Theorem \ref{thm:main1} says that as $n\to \infty$ the random measure $\hat{\mu}_n$ converges to a deterministic measure.}\label{fig:complex}
\end{figure}
Given $\U_n$ as above, we define the random probability measure $\hat\mu_n$ on $\hat\G$:
\be \hat{\mu}_n= \frac{1}{b_0(\check{C}(\U_n))} \sum \delta_{[s]}, \ee
where the sum is over all connected components $s$ of $\U_n$, $[s]$ denotes the type of $s$ (i.e., the equivalence class of all connected complexes homotopy equivalent to $s$), 
and $b_0$ denotes the number of connected components. 

\begin{remark}The next theorem deals with the convergence of the random measure $\hat\mu_n$ in the limit $n\to\infty.$ We endow the set $\mathscr{P}$ of probability measures on the countable set $\hat\G$ with the \emph{total variation distance}:
\be d(\mu_1, \mu_2)=\sup_{A\subset \hat{\G}}\left|\mu_1(A)-\mu_2(A)\right|.\ee
In this way $\hat\mu_n$ is a random variable with values in the metric space $(\mathscr{P}, d)$. Convergence \emph{in probability} (which is used in Theorem \ref{thm:main1} and Theorem \ref{thm:mainpoisson}) of a sequence of random variables $\{\mu_n\}_{n\in\mathbb{N}}$ to a limit $\mu$ means that for every $\e>0$ we have $\lim_{n \rightarrow \infty} \PP\{d(\mu_n, \mu)>\e\}=0.$
\end{remark}

\begin{thm}\label{thm:main1}
The random measure $\hat\mu_n$ converges in probability to a universal deterministic probability measure $\mu\in \mathscr{P}$ supported on the set $\G$ of connected $\R^d$-geometric complexes.
\end{thm}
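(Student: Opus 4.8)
\emph{Strategy and localization.}
The plan is to implement the Nazarov--Sodin / Sarnak--Wigman localization scheme in this setting. Fix $\gamma\in\hG$, write $N_\gamma(\U_n)$ for the number of connected components of $\U_n$ of type $\gamma$ and $b_0(\U_n)$ for the total number of components, so that $\hat\mu_n(\{\gamma\})=N_\gamma(\U_n)/b_0(\U_n)$. The first move is to pass to a local Euclidean model: cover $\M$ by $\sim n/R^d$ pairwise disjoint cells of Riemannian volume $R^d/n$ for a large fixed $R$. Dilating a cell by $n^{1/d}$ and transporting it through the exponential map at its center turns it into a region agreeing with the cube $Q_R=[0,R]^d\subset\R^d$ up to a metric distortion of order $n^{-2/d}$ on this scale (normal-coordinate estimates, together with the geodesic convexity of small balls behind Lemma~\ref{lemma:convex}). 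Up to errors negligible in all the quantities below --- de-Poissonization, the metric distortion, and the (negative) dependence among cells --- the sample restricted to one cell is then coupled with an independent unit-intensity Poisson point process $\mathcal X$ on $Q_R$, and the corresponding local complex with the \v Cech complex $\check C\big(\bigcup_{x\in\mathcal X}B(x,\alpha)\big)$; write $N_\gamma(\mathcal X,Q_R)$ for its number of type-$\gamma$ components.

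\emph{Local densities.}
Let $N^{(R)}_\gamma(\U_n)$ count the type-$\gamma$ components of $\U_n$ contained in some Riemannian ball $\B(x,Rn^{-1/d})$. Up to components straddling cell walls --- whose number, counted with multiplicity, is $o(n)$ in expectation once $R$ is large, by a surface-to-volume estimate --- $N^{(R)}_\gamma(\U_n)$ is the sum over cells of the cellwise type-$\gamma$ counts, so $\tfrac1n\EE\,N^{(R)}_\gamma(\U_n)\to R^{-d}\EE\,N_\gamma(\mathcal X,Q_R)=:\rho_\gamma(R)$ as $n\to\infty$. Existence of $c_\gamma:=\lim_{R\to\infty}\rho_\gamma(R)\in[0,\infty)$ is the continuum analogue of the core Nazarov--Sodin lemma: a type-$\gamma$ component sitting in a sub-cube of $Q_R$ at distance $>2\alpha$ from its walls remains a type-$\gamma$ component of the larger complex, so $R\mapsto\EE\,N_\gamma(\mathcal X,Q_R)$ is superadditive up to a lower-order boundary term and a standard near-subadditivity argument gives the limit. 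The same argument for $b_0$ yields $c:=\lim_{R\to\infty}R^{-d}\EE\,b_0(\mathcal X,Q_R)$ with $0<c<\infty$ (positive since isolated points occur with positive density, finite since $b_0$ is at most the number of points). As $\mathcal X$ is a fixed Euclidean Poisson process, $c_\gamma$ and $c$ depend only on $d$ and $\alpha$: this is the universality.

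\emph{Concentration; negligibility of large components.}
For fixed $R$, the coupling presents $\tfrac1n N^{(R)}_\gamma(\U_n)$ --- and likewise $\tfrac1n\,b^{(R)}_0(\U_n)$, where $b^{(R)}_0$ counts components contained in a ball of rescaled radius $R$ --- as an average of $\sim n/R^d$ asymptotically independent cell contributions, each bounded in every $L^p$ by a moment of $\mathrm{Poisson}(R^d)$, plus a short-range correction; hence the variance is $O(1/n)$ and Chebyshev gives $\tfrac1n N^{(R)}_\gamma(\U_n)\xrightarrow{P}\rho_\gamma(R)$ and $\tfrac1n b^{(R)}_0(\U_n)\xrightarrow{P}\rho(R)$, with $\rho(R)\uparrow c$. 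It then remains to show $\limsup_n\tfrac1n\EE\big[b_0(\U_n)-b^{(R)}_0(\U_n)\big]\to0$ as $R\to\infty$, which also dominates $N_\gamma(\U_n)-N^{(R)}_\gamma(\U_n)$. A component not contained in any ball of rescaled radius $R$ has rescaled diameter $\ge R$; away from the critical value $\alpha_c$ the density of \emph{non-giant} clusters of diameter $\ge R$ decays exponentially in $R$ (exponential decay of the cluster radius in sub- and super-critical continuum percolation), while the unique giant cluster contributes at most $1$; at $\alpha=\alpha_c$ one uses instead polynomial decay of the cluster-size tail. Consequently $\tfrac1n N_\gamma(\U_n)\xrightarrow{P}c_\gamma$ and $\tfrac1n b_0(\U_n)\xrightarrow{P}c$, so $\hat\mu_n(\{\gamma\})\xrightarrow{P}c_\gamma/c=:\mu(\{\gamma\})$. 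Summing $\sum_\gamma N_\gamma(\U_n)=b_0(\U_n)$ and using the large-diameter bound as a uniform-integrability input lets one pass to the limit termwise, so $\sum_\gamma c_\gamma=c$ and $\mu$ is a probability measure; and since any component realized inside a small Riemannian ball is, via $\exp$ and Lemma~\ref{lemma:convex}, homotopy equivalent to a union of round Euclidean balls, all of $\mu$'s mass arises from types in $\G$, i.e. $\mathrm{supp}\,\mu\subseteq\G$.

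\emph{From coordinatewise to total variation, and the main difficulty.}
To upgrade the coordinatewise convergence $\hat\mu_n(\{\gamma\})\xrightarrow{P}\mu(\{\gamma\})$ to $d(\hat\mu_n,\mu)\xrightarrow{P}0$, fix $\e>0$ and let $F_R\subset\hG$ be the finite set of homotopy types realizable by a Poisson configuration in $Q_R$. Then $\hat\mu_n(\hG\setminus F_R)\le b_0(\U_n)^{-1}\cdot\#\{\text{components of rescaled diameter}\ge cR\}$, whose expectation tends to $0$ after $n\to\infty$ and then $R\to\infty$ (by the large-diameter bound and $b_0(\U_n)\sim cn$), while $\mu(\hG\setminus F_R)\to0$ as the tail of a convergent series; choosing $R$ so that both tails are $<\e/3$ and invoking the finite-set convergence on $F_R$ yields $\PP\{d(\hat\mu_n,\mu)>\e\}\to0$. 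I expect the main obstacle to be precisely this control of ``medium-scale'' components --- the assertion that components of bounded homotopy type but large rescaled diameter carry vanishing density --- together with keeping the cell-decomposition errors uniform in $n$: one must simultaneously dominate the de-Poissonization and coupling errors, the boundary-straddling components, and the tail of the cluster-diameter distribution, this last being a genuine continuum-percolation input whose behaviour at $\alpha_c$ differs from elsewhere. By contrast, the existence of the local densities via subadditivity and the transfer from $\M$ to $\R^d$ are comparatively routine once that estimate is in hand.
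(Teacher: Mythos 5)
Your overall scheme---localize, pass to a Euclidean Poisson model, establish existence of limiting densities $c_\gamma$, control a tail of ``large'' components, and upgrade coordinatewise convergence to total variation---matches the paper's in broad strokes, but the concrete tools differ in every step. Where you use a fixed-cell decomposition plus a near-subadditivity argument, the paper uses the Nazarov--Sodin \emph{integral geometry sandwich} (Theorems~\ref{thm:IGS} and~\ref{thm:IGSmanif}) together with a direct application of the $d$-dimensional ergodic theorem (Theorem~\ref{thm:ergodic}); this packages the boundary bookkeeping into a single pointwise two-sided inequality with an $O(r^{-1})$ error, so no explicit cell-coupling or de-Poissonization is needed. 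Where you propose a variance/Chebyshev concentration via asymptotic independence of cells (waving away the negative dependence and shared-boundary correlations as ``a short-range correction''), the paper works entirely with $L^1$ convergence and an Egorov-type argument over $\M$. Both of these are genuine methodological choices with about the same final strength; the paper's is leaner because it never estimates second moments.

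There is one genuine gap. Your control of medium-scale components leans on the ``exponential decay of the cluster radius in sub- and super-critical continuum percolation,'' and at $\alpha=\alpha_c$ on ``polynomial decay of the cluster-size tail.'' The latter is not a known rigorous fact for Poisson--Boolean continuum percolation in general dimension, and in fact none of this input is needed. A component of $\mathcal P$ with diameter $\ge R$ must contain at least $\lceil R/(2\alpha)\rceil$ Poisson points, so the expected density of such components is at most $2\alpha/R$, uniformly in $\alpha$; this is exactly the trivial observation the paper exploits, in the form
\begin{equation}
\sum_{\gamma\in\G}\frac{\EE\,\N(\Pc,B(0,r);\gamma)}{\Vol(B(0,r))}
\le\frac{\EE\,\#\bigl(P\cap B(0,r)\bigr)}{\Vol(B(0,r))}=1 ,
\end{equation}
which is what drives Lemma~\ref{lemma:notleak} (``topology does not leak to infinity'') and hence Proposition~\ref{prop:probmeas}. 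Replacing your percolation claim by this point-count bound repairs the argument; as written, though, it asserts an input that is not available in the literature and whose failure would leave the critical value $\alpha=\alpha_c$ uncovered. A secondary, smaller issue: your final step ``invoking the finite-set convergence on $F_R$'' is exactly the content of the paper's measure-theoretic Lemma~\ref{lemma:cvg}, and your argument that $\operatorname{supp}\mu\subseteq\G$ (a component sitting in a small geodesic ball is homotopy equivalent to a Euclidean geometric complex) is the same device that the paper implements via Proposition~\ref{prop:stability} and the coupling Theorem~\ref{thm:coupling}; those pieces of yours are correct and essentially the same as the paper's.
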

The ``universal'' in the previous statement means that $\mu$ does not depend on the manifold $M$ (but it depends on its dimension $d$ and on the parameter $\alpha$).

\begin{remark}
Since $\G$ is a proper subset of $\hG$,
the measure $\mu$ 
does not charge some points in $\hG$.
This is consistent with the 
findings of \cite{BobrowskiWeinberger}
where it was shown that an additional factor of $\log n$
is needed in the radii of the balls defining $\U_n$
in order to see the so-called ``connectivity threshold'' where nontrivial $d$-dimensional homology appears.
\end{remark}

\begin{remark}In the one-dimensional case $d=1$, the set $\G$ contains only one element: the class of the point (since any connected geometric complex in $\R$ is contractible). The case $d=2$ is already more interesting, since in this case $\G=\{[w_k]\}_{k\in \mathbb{N}}$ where $w_k$ is the wedge of $k$-circles ($k=0$ is the point). In general the support of $\mu$ is more difficult to describe.
\end{remark}
\begin{remark}We can write the limiting measure $\mu$ as:
\be
\mu=\sum_{\gamma\in \G} a_{\gamma}\delta_{\gamma}
\ee
for some non-negative constants $a_\gamma$, $\gamma\in \G$, which depend on the $\alpha>0$ appearing in \eqref{eq:uniform},
and satisfy $a_\gamma = c_\gamma/c$ with $c_\gamma,c$ defined in Proposition \ref{prop:ergodic}.
All of the coefficients $a_\gamma$ are strictly positive by Proposition \ref{prop:alleuclid}.
\end{remark}

The following result is related to the positivity of all coefficients $a_\gamma$.
While it is not needed for showing such positivity (which follows from Proposition \ref{prop:alleuclid}), it provides additional information on the prevalence of localized components with prescribed homotopy type throughout the manifold, see Section \ref{sec:quant}.

\begin{prop}[Existence of all topologies]\label{prop:all}Let $\Pc_0\subset \R^d$ be a finite geometric complex and  $\alpha>0$. There exist $R, a>0$ (depending on $\Pc_0$ and $\alpha$ but
independent of $M$ and $n$) such that for every $p\in \M$ and for $n$ large enough:
\be \PP\left\{\U_n\cap \B(p, Rn^{-1/d})\simeq \Pc_0\right\}>a.\ee
\end{prop}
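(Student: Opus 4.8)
The plan is to exhibit, uniformly in $p\in\M$, a positive-probability event on which $\U_n\cap\B(p,Rn^{-1/d})$ is exactly a rescaled copy of a fixed configuration of balls realizing $\Pc_0$, with no other point of $U_n$ reaching into this region. First I would fix a \emph{stable} Euclidean model. Since $\Pc_0$ is an $\R^d$-geometric complex, after a rescaling it is homotopy equivalent to the Cech complex $\check C\big(\{B(q_j,\alpha)\}_{j=1}^m\big)$ of round balls of radius $\alpha$ about suitable points $q_1,\dots,q_m\in\R^d$, and a standard genericity argument lets us choose this configuration so that every multi-intersection $\bigcap_{j\in S}B(q_j,\alpha)$, $S\subseteq\{1,\dots,m\}$, is either empty or has nonempty interior (an isolated degenerate intersection can always be removed by an arbitrarily small perturbation of the $q_j$ without changing the homotopy type). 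Then --- by the nerve lemma --- there is $\delta\in(0,\alpha)$ such that for any points $q_j'$ with $|q_j'-q_j|<\delta$ and any compact sets $C_1,\dots,C_m$ with $B(q_j',\alpha-\delta)\subseteq C_j\subseteq B(q_j',\alpha+\delta)$ all of whose multi-intersections $\bigcap_{j\in S}C_j$ are empty or contractible, one has $\bigcup_jC_j\simeq\Pc_0$. Put $\rho:=\max_j|q_j|$ and $R:=\rho+2\alpha$.

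Next I would define the favorable event. For $n$ large (so that $\rho n^{-1/d}$ is below the injectivity radius of $\M$) set $B_j:=\B\big(\exp_p(q_jn^{-1/d}),\tfrac{\delta}{2}n^{-1/d}\big)$, $j=1,\dots,m$, and $N:=\B\big(p,(\rho+3\alpha)n^{-1/d}\big)\setminus\bigcup_{j=1}^m B_j$, and let $E_n$ be the event that each $B_j$ contains exactly one point $p_{k_j}$ of $U_n$ and $N$ contains none. On $E_n$: the union $\bigcup_j\B(p_{k_j},\alpha n^{-1/d})$ lies in $\B\big(p,(\rho+\alpha+\delta)n^{-1/d}\big)\subseteq\B(p,Rn^{-1/d})$, while every other point of $U_n$ is at distance $\ge(\rho+3\alpha)n^{-1/d}$ from $p$, so its ball $\B(\cdot,\alpha n^{-1/d})$ meets $\B(p,Rn^{-1/d})=\B\big(p,(\rho+2\alpha)n^{-1/d}\big)$ at most in its boundary sphere; hence $\U_n\cap\B(p,Rn^{-1/d})=\bigcup_j\B(p_{k_j},\alpha n^{-1/d})$. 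To identify its homotopy type I would pass to geodesic normal coordinates at $p$ rescaled by $n^{1/d}$: by compactness the metric there agrees with the Euclidean one up to $O(|x|^2)$ uniformly in $p$, and the geodesic balls of radius $\alpha n^{-1/d}$ together with all their multi-intersections are contractible (Lemma~\ref{lemma:convex}); so in these coordinates, for $n$ large, each $\B(p_{k_j},\alpha n^{-1/d})$ is a set $C_j$ as in the first paragraph with some $|q_j'-q_j|<\delta$, and therefore $\U_n\cap\B(p,Rn^{-1/d})\simeq\Pc_0$ on $E_n$.

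It then remains to bound $\PP(E_n)$ from below. By compactness, $\Vol(\B(x,r))=\omega_d r^d(1+O(r^2))$ uniformly in $x\in\M$, where $\omega_d=\Vol(B(0,1))$; hence $n\,\Vol(B_j)\to\omega_d(\delta/2)^d$ and $n\big(\Vol(N)+\sum_j\Vol(B_j)\big)=n\,\Vol\big(\B(p,(\rho+3\alpha)n^{-1/d})\big)\to\omega_d(\rho+3\alpha)^d$, both uniformly in $p$. Since the $n$ sample points are independent and uniform on $\M$ and $\Vol(\M)=1$, the counts of points falling in $B_1,\dots,B_m,N$ and in the complement form a multinomial vector, and $\PP(E_n)$ is the single corresponding term:
\be
\PP(E_n)=\frac{n!}{(n-m)!}\,\Big(\prod_{j=1}^{m}\Vol(B_j)\Big)\Big(1-\Vol(N)-\sum_{j=1}^{m}\Vol(B_j)\Big)^{n-m}\;\longrightarrow\;\big(\omega_d(\delta/2)^d\big)^{m}\,e^{-\omega_d(\rho+3\alpha)^d}
\ee
as $n\to\infty$, uniformly in $p$. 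Taking $a$ to be half of this strictly positive limit yields $\PP(E_n)>a$ for every $p\in\M$ and all $n$ large, with $R$ and $a$ depending only on $\Pc_0$ and $\alpha$ (through $m,\rho,\delta,d$), establishing the proposition.

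I expect the main obstacle to be the first step: pinning down a configuration whose Cech complex is ``rigidly'' $\Pc_0$ --- stable both under perturbing the ball centers and under replacing Euclidean balls by geodesic balls --- so that a single random point in each site already reproduces $\Pc_0$ up to homotopy. Once that stable local model is in hand, the rest is the routine volume computation above.
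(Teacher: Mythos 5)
Your argument is correct and follows essentially the same approach as the paper's: fix a nondegenerate Euclidean model for $\Pc_0$, define the event that exactly one sample point falls near each model center with no other points in a slightly larger geodesic ball, identify the homotopy type via stability of the Cech complex under small perturbations together with the convergence of rescaled normal coordinates (the paper delegates this to Proposition \ref{prop:stability}, which you re-derive inline via the nerve lemma), and lower-bound the probability with the multinomial formula, converging to a positive limit uniformly in $p$. The one place to tighten is the genericity step: an arbitrary small perturbation of the centers $q_j$ can in general change the homotopy type at a degenerate configuration (e.g.\ three pairwise tangent balls), so the correct statement to invoke is the paper's Lemma \ref{rmk:nondeg}, whose proof makes the complex nondegenerate by inflating the radius within a semialgebraic monotone family (which preserves the homotopy type by \cite[Lemma 16.17]{BPR}) and then rescaling to return the radius to $\alpha$ --- yielding the well-chosen small perturbation of the $q_j$ your parenthetical asserts, but via a more careful route than a generic one.
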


\begin{remark}
Let us point out an interesting consequence of the previous Proposition \ref{prop:all}: given a compact, embedded manifold $M_0 \hookrightarrow \R^d$, then for $R>0$ large enough with positive probability the pair $(\R^d, M_0)$ is homotopy equivalent to the pair $(\B(p, R n^{-1/d}), \U_n\cap\B(p, R n^{-1/d}))$. This follows from the fact that, by \cite[Proposition 3.1]{NSW}, one can cover $M_0$ with (possibly many) small Euclidean balls $M_0\subset \bigcup_{k=1}^\ell B(p_k, \e)=\U$ with the inclusion $M_0\hookrightarrow \U$ a homotopy equivalence --  hence the pair $(\R^d, M_0)$ is homotopy equivalent to a pair $(\R^d, \Pc_0)$ with $\Pc_0$ a $\R^d$-geometric complex. 
\end{remark}

\subsection{The local model (the Euclidean case)}The proof of Theorem \ref{thm:main1} for the Riemannian case involves a study of a rescaled version of the problem in a small neighborhood of a given point. Specifically, one can fix  $R>0$ and a point $p\in M$ and study the asymptotic structure of our random complex in the ball $\hat{B}(p, Rn^{-1/d})$. The random geometric complex that we obtain in the $n\to \infty$ limit can be described as follows.

Let $P=\{p_1, p_2, \ldots\}$ be a set of points sampled from the standard spatial Poisson distribution on $\R^d$ 
and for $\alpha>0$ consider the random set:
\be \Pc=\bigcup_{p\in P}B(p, \alpha).\ee
We also define $\Pc_R$ to
be the subset of $\Pc$ consisting
of all the connected components of $\Pc$ 
that are completely contained in the interior of $B(0,R)$.
Note that each $B(p, \alpha)$ is now convex, 
and, by the Nerve Lemma, $\Pc_R$ is homotopy equivalent to the simplicial complex $\check{C}(\Pc_R)$. 
The relation between $\mathcal{U}_n\cap \hat{B}(p, R n^{-1/d})$ and $\mathcal{P}_R$ is described in Theorem \ref{thm:coupling}.

Similarly to what we have done above, we define the random probability measure $\mu_R$ on the set $\G$ of homotopy types of finite and connected $\R^d$-geometric complexes:
\be \mu_R=\frac{1}{b_0(\check{C}(\Pc_R))} \sum \delta_{[s]}, \ee
where the sum is over all connected components 
$s$ of $\Pc_R$.
The following result provides a limit law
for $\mu_R$.

\begin{thm}\label{thm:mainpoisson}
The family of random measures $\mu_R$ converges in probability to a deterministic probability measure $\mu\in \mathscr{P}$ whose support is all of $\G$. 
\end{thm}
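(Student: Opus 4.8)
The plan is to first establish a law of large numbers for the number of connected components of each fixed homotopy type, and then upgrade it to convergence in the total variation metric $d$ by a short tightness argument. For $\gamma\in\G$ let $N_\gamma(R)$ denote the number of connected components of $\Pc_R$ of type $\gamma$, so that $\mu_R(\{\gamma\})=N_\gamma(R)/b_0(\check{C}(\Pc_R))$ and $b_0(\check{C}(\Pc_R))=\sum_{\gamma\in\G}N_\gamma(R)$. The crucial step is to show that there are deterministic constants $c_\gamma\in[0,1]$ and $c\in(0,\infty)$, with $c=\sum_{\gamma}c_\gamma$, such that
\[
\frac{N_\gamma(R)}{\Vol(B(0,R))}\ \xrightarrow[R\to\infty]{}\ c_\gamma
\qquad\text{and}\qquad
\frac{b_0(\check{C}(\Pc_R))}{\Vol(B(0,R))}\ \xrightarrow[R\to\infty]{}\ c
\]
almost surely; this is the substance of Proposition \ref{prop:ergodic}. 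Granting it, $\mu_R(\{\gamma\})\to a_\gamma:=c_\gamma/c$ almost surely for each $\gamma$, and, since $\sum_\gamma a_\gamma=1$, the object $\mu:=\sum_{\gamma\in\G}a_\gamma\delta_\gamma$ is a probability measure on $\G$; the positivity of all the $c_\gamma$ then forces $\mathrm{supp}(\mu)=\G$.

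For the law of large numbers I would exploit that the Poisson process $P$ on $\R^d$ is translation-invariant and ergodic, together with the fact that the balls defining $\Pc$ have the fixed radius $\alpha$: whether a point $p\in P$ lies in a given component, and the homotopy type of that component, is determined by finitely many points of $P$ (those within the almost surely finite diameter of the component of $p$, plus a bounded margin). Call $p\in P$ the \emph{leader} of its component if $p$ is its lexicographically smallest point (almost surely well defined for bounded components), and set $\Xi_\gamma:=\{p\in P:\ p\text{ is the leader of a bounded component of }\Pc\text{ of type }\gamma\}$. Then $\Xi_\gamma$ is a translation-equivariant measurable factor of $P$, hence a stationary ergodic point process, of finite intensity $c_\gamma\le 1$ (each leader is a distinct point of $P$); likewise $\Xi:=\bigcup_\gamma\Xi_\gamma$ is stationary and ergodic with intensity $c=\sum_\gamma c_\gamma\le 1$, and $c>0$ because on the positive-probability event that a fixed bounded region contains exactly one point of $P$ with no other point of $P$ within distance $2\alpha$, that point is the leader of an isolated point-component. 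By the spatial (multiparameter) ergodic theorem, $\Xi_\gamma(B(0,R))/\Vol(B(0,R))\to c_\gamma$ and $\Xi(B(0,R))/\Vol(B(0,R))\to c$ almost surely. It remains to compare $N_\gamma(R)$ with $\Xi_\gamma(B(0,R))$: a component contained in $B(0,R)$ has its leader in $B(0,R)$, so $N_\gamma(R)\le\Xi_\gamma(B(0,R))$, while their difference is at most the number of leaders lying in the shell $B(0,R)\setminus B(0,R-L)$ -- whose ratio to $\Vol(B(0,R))$ tends to $0$ as $R\to\infty$ for each fixed $L$, because $\Vol(B(0,R-L))/\Vol(B(0,R))\to 1$ -- plus the number of leaders in $B(0,R)$ whose component has diameter larger than $L$; dividing the latter by $\Vol(B(0,R))$ it converges almost surely (as $R\to\infty$) to the intensity of leaders whose bounded component has diameter larger than $L$, and that intensity tends to $0$ as $L\to\infty$ since a bounded component has finitely many points, hence finite diameter. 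Letting $R\to\infty$ and then $L\to\infty$ yields the first limit in the display; applying the same argument to $\Xi$ yields the second.

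Convergence in $(\mathscr{P},d)$ then follows with no further estimates. On the full-measure event carrying all the limits above, for any finite $F\subset\G$,
\[
\frac{1}{\Vol(B(0,R))}\sum_{\gamma\notin F}N_\gamma(R)=\frac{b_0(\check{C}(\Pc_R))}{\Vol(B(0,R))}-\sum_{\gamma\in F}\frac{N_\gamma(R)}{\Vol(B(0,R))}\ \xrightarrow[R\to\infty]{}\ c-\sum_{\gamma\in F}c_\gamma ,
\]
so $\mu_R(\G\setminus F)\to\sum_{\gamma\notin F}a_\gamma$. Since for probability measures on the countable set $\G$ one has $d(\mu_R,\mu)\le\sum_{\gamma\in F}|\mu_R(\{\gamma\})-a_\gamma|+\mu_R(\G\setminus F)+\mu(\G\setminus F)$, sending $R\to\infty$ gives $\limsup_R d(\mu_R,\mu)\le 2\sum_{\gamma\notin F}a_\gamma$ almost surely, and choosing $F$ with $\sum_{\gamma\notin F}a_\gamma$ arbitrarily small (possible because $\sum_\gamma a_\gamma=1$) shows $d(\mu_R,\mu)\to0$ almost surely, in particular in probability. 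Finally, $c_\gamma>0$ for every $\gamma\in\G$: picking a configuration of finitely many points whose $\alpha$-neighborhood realizes $\gamma$ and lies in some ball, the event that $P$ places exactly one point in each of a cluster of tiny balls around a fixed translate of these points and no point of $P$ within distance $3\alpha$ of the resulting cluster has a fixed positive probability and creates an isolated component of type $\gamma$; evaluating the intensity $c_\gamma$ of $\Xi_\gamma$ on a box large enough to contain this event shows $c_\gamma>0$. (This positivity is Proposition \ref{prop:alleuclid}.)

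The step I expect to be the main obstacle is the law of large numbers, and within it the passage from the ergodic ``leader'' count $\Xi_\gamma(B(0,R))$ to the genuine count $N_\gamma(R)$ of components \emph{contained} in $B(0,R)$: one must separate this non-local containment constraint from the local, stabilizing functional and rule out a macroscopic contribution from large finite components straddling $\partial B(0,R)$, which requires sufficient control of the tail of the component-diameter distribution -- and this must be done even when $\alpha$ lies at or above the continuum percolation threshold, where an infinite component is present (and therefore never counted by any $\mu_R$). A secondary, more routine point is the measurability and ergodicity bookkeeping needed to realize each $\Xi_\gamma$ as a factor of $P$ and to apply the spatial ergodic theorem along the exhaustion by balls $B(0,R)$.
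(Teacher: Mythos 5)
Your argument is correct in essence, and it takes a genuinely different route from the paper. The paper derives Proposition \ref{prop:ergodic} through the integral geometry sandwich (Theorem \ref{thm:IGS}), applying the ergodic theorem to the spatial average of $\N(\Pc,B(x,r);\gamma)/\Vol(B_r)$ over ball centers $x$ and obtaining $L^1$ convergence; you instead introduce the stationary ergodic ``leader'' point process $\Xi_\gamma$ and apply the pointwise ergodic theorem to it directly, which yields almost-sure convergence and requires you to control the discrepancy between $\Xi_\gamma(B_R)$ and $N_\gamma(R)$ by hand (via the shell plus long-component decomposition, which plays the role the sandwich plays in the paper). Your construction also has a real conceptual payoff: because the $\Xi_\gamma$ are pairwise disjoint factors of $P$ and $\Xi=\coprod_\gamma\Xi_\gamma$ as configurations, the identity $c=\sum_\gamma c_\gamma$ is immediate from countable additivity of the intensity measure, so you do not need the paper's ``topology does not leak to infinity'' argument (Lemma \ref{lemma:notleak}) or the separate verification that $\mu$ is a probability measure (Proposition \ref{prop:probmeas}). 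Similarly, you upgrade coefficient-wise convergence to total-variation convergence by a direct finite-truncation estimate rather than via Lemma \ref{lemma:cvg}. What the paper's approach buys that yours does not is transferability: the integral geometry sandwich has a Riemannian analogue (Theorem \ref{thm:IGSmanif}), which is essential for the manifold version (Theorem \ref{thm:main1}) where there is no translation group and the leader trick is unavailable. Two points in your sketch deserve to be made explicit if written up fully: (i) the measurability of the mark ``$p$ is a leader of a bounded type-$\gamma$ component'' as a shift-equivariant factor of $P$, and (ii) the precise form of the multiparameter ergodic theorem applied to the point process $\Xi_\gamma$ (one typically applies it to $f(\omega)=\Xi_\gamma(\omega)(B_1)$ and removes the resulting smoothing by another boundary estimate), but these are the routine items you flagged.
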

It is important to note that the limiting measure $\mu$ 
appearing in Theorem \ref{thm:mainpoisson} is the same one appearing in Theorem \ref{thm:main1} (this explains the statement on the support of the limiting measure in Theorem \ref{thm:main1}).

Besides their positivity,
little is known about the coefficients $a_\gamma$ in $\mu$,
and a worthwhile computational problem would be to perform Monte Carlo simulations in order to estimate their numerical values
and how they depend on $\alpha$.
Concerning dependence on $\alpha$, a direction that has been suggested to us by Matthew Kahle is to study whether the dependence of $\mu$ on $\alpha$ exhibits any interesting behavior related to the ``percolation threshold''
(recalling that our random geometric complex is associated to continuum percolation with disks for which existence of a percolation threshold  $\alpha = \alpha_c$ is known \cite{MeesterRoy}).

While this paper was under review,
K. A. Dowling and the third author posted a preprint \cite{DL}
further adapting these methods to study the limiting homotopy distribution for
random cubical complexes
associated to Bernoulli site percolation on a cubical grid,
where it was shown that the limiting homotopy measure has an exponentially decaying tail for subcritical percolation
and a subexponential tail (slower than exponential decay) for supercritical percolation.  It is then natural to pose a specific version of the above problem suggested by M. Kahle, namely, to investigate the tail decay of $\mu$ in the current setting of random geometric graphs and to determine whether it exhibits a phase transition at the percolation threshold $\alpha=\alpha_c$ (see \cite[Concluding Remarks]{DL}).

\noindent {\bf Outline of the paper.}
We prove Theorem \ref{thm:mainpoisson}
addressing the Euclidean setting in Section \ref{sec:Euclid}.
In Section \ref{sec:semilocal}, we establish
the ``semi-local'' result involving a double-scaling limit within a neighborhood on the manifold,
and in Section \ref{sec:global}
we collect the semi-local information throughout
the manifold
in order to prove the global result Theorem \ref{thm:main1} for the manifold setting.
We prove Proposition \ref{prop:all} 
in Section \ref{sec:quant}.
Section \ref{sec:prelim} contains some basic tools
used throughout the paper,
including the integral geometry sandwiches that play an essential role.

\noindent {\bf Historical Note.}
The study of the topology of random simplicial complexes has taken shape only recently with intense activity in the past few years, but it is worth mentioning (as it seems to have been forgotten) that this theme was proposed by V.I. Arnold in the early 1970s,
with specific attention given to random geometric complexes in the thermodynamic regime.
In the collection \cite{Arnold} of Arnold's problems, the 28th problem from 1973 states
(notice that the set considered is homotopy equivalent to a geometric complex by the nerve lemma):

\emph{Consider a random set of points in $\R^d$
with density $\lambda$.
Let $V(\alpha)$ be the $\alpha$-neighborhood of this set. Consider the averaged Betti numbers
$$\beta_i(\alpha,\lambda) := \lim_{R \rightarrow \infty} \frac{b_i(V(\alpha) \cap B(0,R))}{R^d}.$$
Investigate these numbers.}

\subsection*{Acknowledgements}
This work was initiated 
during the conference ``Stochastic Topology and Thermodynamic Limits'' that was hosted at ICERM, and part of the work was completed during   
a second week-long visit to ICERM through the collaborate@ICERM program. The authors wish to thank the institute for their support and for a pleasant and hospitable work environment. The authors would also like to thank the anonymous referee for a careful reading of the paper and many helpful comments regarding the exposition.  This research was conducted while A.A. was supported by NSF Grant CAREER DMS-1653552.

\section{Preliminary material}\label{sec:prelim}

In this section we collect some basic tools used throughout the paper.
\subsection{Geometry}
A subset $A$ of a Riemannian manifold $(M, g)$ is called \emph{strongly convex} if for any pair of points $y_1, y_2\in \textrm{clos}(A)$ there exists a unique minimizing geodesic joining these two points such that its interior is entirely contained in $A$ (see \cite{CE,DoCarmo}). 
\begin{lemma}\label{lemma:convex}Let $(M, g)$ be a compact Riemannian manifold. There exists $r_0>0$ such that for every point $x\in M$ and every $r<r_0$ the ball $\B(x, r)$ is strongly convex and contractible. Moreover for every $x_1, \ldots, x_k\in \M$ and $0<r_1, \ldots, r_k<r_0$ the set $\bigcap_{j=1}^k\B(x_j, r_j)$ is also strongly convex and contractible. In particular, by the Nerve Lemma,  the set $\bigcup_{j=1}^k\B(x_j, r_j)$ is homotopy equivalent to its associated Cech complex.
\end{lemma}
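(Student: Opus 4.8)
The statement is essentially a compilation of classical Riemannian geometry facts, so the plan is to assemble them carefully rather than to invent anything new. First I would invoke the existence of a \emph{strong convexity radius}: for a compact Riemannian manifold $(M,g)$, there exists $r_0 > 0$ such that every metric ball $\B(x,r)$ with $r < r_0$ is strongly convex in the sense defined above. This is standard (see do Carmo or Cheeger--Ebin): one uses that the injectivity radius and the convexity radius are both positive and bounded below on a compact manifold, the latter following from a compactness argument applied to the local estimates obtained via the second variation formula (the distance function $r \mapsto d(x,\cdot)$ restricted to a short geodesic is strictly convex once $r$ is smaller than a bound controlled by the sectional curvature and the injectivity radius). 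Shrinking $r_0$ further if necessary, we may also assume each such ball is contained in a normal (geodesically convex) coordinate neighborhood, hence diffeomorphic to a Euclidean ball and in particular contractible (the contraction being the geodesic retraction to the center, which is well-defined and continuous precisely because of strong convexity and the uniqueness of minimizing geodesics).

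Next I would handle the finite intersection $\bigcap_{j=1}^k \B(x_j, r_j)$ with all $r_j < r_0$. Strong convexity is preserved under intersections essentially by definition: if $y_1, y_2$ lie in the closure of the intersection, they lie in the closure of each $\B(x_j,r_j)$, so the unique minimizing geodesic between them (unique because, say, both points lie in a single strongly convex ball, so there is only one candidate) has interior contained in each $\B(x_j, r_j)$, hence in the intersection. This shows the intersection is strongly convex. For contractibility of the intersection, the cleanest route is again geodesic contraction: fix a point $q$ in the intersection (if the intersection is empty there is nothing to prove, and the nerve statement is vacuous for that subfamily), and contract along minimizing geodesics toward $q$; strong convexity of each ball guarantees these geodesics stay inside each ball, so the homotopy takes values in the intersection, and it is continuous by smooth dependence of geodesics on endpoints. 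Thus every nonempty finite intersection of such balls is contractible.

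Finally, the conclusion about $\bigcup_{j=1}^k \B(x_j,r_j)$ being homotopy equivalent to its Čech nerve follows immediately from the Nerve Lemma (in the form for open covers by sets all of whose finite intersections are empty or contractible — e.g. the version in Hatcher or Bott--Tu): we have just verified the hypothesis, since each ball is open and contractible and each finite intersection is empty or contractible. The only mild technical point is that the Nerve Lemma is usually stated for open covers, whereas $\B(x_j,r_j)$ as written could be taken closed; one resolves this by either taking the balls open throughout (which is consistent with the rest of the paper's use of $\U_n$) or by a standard thickening argument, enlarging each radius by an infinitesimal amount still below $r_0$ so that the closed balls are covered by slightly larger open balls with the same nerve.

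I expect the only real obstacle to be bookkeeping rather than mathematics: one must choose $r_0$ small enough to \emph{simultaneously} guarantee strong convexity, containment in normal neighborhoods (for contractibility and uniqueness of geodesics), and validity of the nerve argument, and then check that this single $r_0$ works uniformly over all $x \in M$ — which is exactly where compactness of $M$ is used, to pass from pointwise local bounds to a uniform positive lower bound. There is no delicate estimate to push through; the substance is entirely in correctly quoting the convexity-radius theorem and the Nerve Lemma and verifying their hypotheses.
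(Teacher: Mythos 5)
Your proposal is correct and follows essentially the same route as the paper: quote the strong convexity radius theorem (Whitehead, via Cheeger--Ebin), use compactness to get a uniform $r_0$, observe that strong convexity is preserved under intersection directly from the definition, note that strongly convex sets are contractible via geodesic retraction (``star-shaped in exponential coordinates''), and apply the Nerve Lemma. Your extra remarks about open vs.\ closed balls and the need to choose $r_0$ small enough to serve all purposes simultaneously are sensible bookkeeping that the paper's more terse proof leaves implicit.
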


\begin{proof}By \cite[Theorem 5.14]{CE} there exists a positive and continuous function $r:M\to (0, \infty)$ such that if $r<r(x)$, then $\B(x, r)$ is strictly convex (this is in fact due to Whitehead). Since $\M$ is compact, then $r_0=\min r>0$. Any strongly convex set in a Riemannian manifold is contractible with respect to any of its point (star-shaped in exponential coordinates), hence it follows that for $r<r_0$ the ball $\B(x, r)$ is also contractible. To finish the proof, we simply observe that the intersection of strongly convex sets $A_1, A_2$ is still strongly convex: in fact given two points $y_1, y_2\in A_1\cap A_2$, by strong convexity of the sets, the unique minimizing geodesic joining the two points is contained in both sets.
\end{proof}

From now on, the notation 
$\genfrac{\{ }{\}}{0pt}{}{\ell}{k}$ denotes
the collection of all $k$-element subsets
of $\{1,2,...,\ell\}.$

We will say that a $\R^d$-geometric complex $\bigcup_{k=1}^\ell B(y_k, r)$ is \emph{nondegenerate} if for every $1\leq k\leq \ell$ and $J=\{j_1, \ldots, j_k\}\in \genfrac{\{ }{\}}{0pt}{}{\ell}{k}$ the intersection $\bigcap_{j\in J}\partial B(y_j, r)$ is transversal (in particular this intersection is empty for $k>d$).

Random geometric complexes are nondegenerate with probability one. However, it could be that without the nondegeneracy assumption one could construct a geometric complex which is not homotopy equivalent to any nondgenerate one. This is not the case, as next Lemma shows.
\begin{lemma}\label{rmk:nondeg} The set of homotopy types of $\R^d$-geometric, connected, nondegenerate complexes coincides with $\G$\footnote{Recall that we did not assume the nondegeneracy condition in the definition of $\G$.}.
\end{lemma}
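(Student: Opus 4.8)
The inclusion of the set of homotopy types of connected nondegenerate $\R^d$-geometric complexes into $\G$ is clear, since a nondegenerate geometric complex is in particular a geometric complex. For the reverse inclusion the plan is the following. Given an arbitrary connected $\R^d$-geometric complex $X=\bigcup_{k=1}^{\ell}\overline{B(y_k,r)}$, I will produce, by an arbitrarily small perturbation of the centers $y_1,\dots,y_\ell$ (with $r$ left fixed), a \emph{nondegenerate} complex $X'$ with $X'\simeq X$. Since a homotopy equivalence preserves connectedness, $X'$ is then a connected nondegenerate $\R^d$-geometric complex with $[X']=[X]$, which proves the claim.

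The perturbation proceeds in two steps. \emph{Step 1: making every nonempty intersection robust.} Fix a point $c\in\R^d$ and set $y_k^{(s)}=c+(1-s)(y_k-c)$, so that all pairwise distances are multiplied by $1-s$. If $J\subseteq\{1,\dots,\ell\}$ satisfies $\bigcap_{j\in J}\overline{B(y_j,r)}\neq\emptyset$, then any common point $x$ of these balls has $|x-y_j|\le r$, so its image $c+(1-s)(x-c)$ has distance $(1-s)|x-y_j|\le(1-s)r<r$ from every $y_j^{(s)}$, whence $\bigcap_{j\in J}\mathrm{int}\,B(y_j^{(s)},r)\neq\emptyset$ for all $s\in(0,1)$; if instead $\bigcap_{j\in J}\overline{B(y_j,r)}=\emptyset$, equivalently if the smallest enclosing radius $\rho_J$ of $\{y_j\}_{j\in J}$ satisfies $\rho_J>r$, then $\bigcap_{j\in J}\overline{B(y_j^{(s)},r)}=\emptyset$ as long as $(1-s)\rho_J>r$. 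As there are finitely many $J$, there is $s_0>0$ such that for all $s\in(0,s_0)$ the set $X^{(s)}:=\bigcup_{k=1}^{\ell}\overline{B(y_k^{(s)},r)}$ has exactly the same Cech complex as $X$, now with every simplex $J$ satisfying $\bigcap_{j\in J}\mathrm{int}\,B(y_j^{(s)},r)\neq\emptyset$; hence by the Nerve Lemma $X^{(s)}\simeq\check{C}(X^{(s)})=\check{C}(X)\simeq X$. \emph{Step 2: achieving transversality.} Fix such an $s$ and write $z_k=y_k^{(s)}$. The conditions ``$\bigcap_{j\in J}\mathrm{int}\,B(z_j,r)\neq\emptyset$'' and ``the smallest enclosing radius of $\{z_j\}_{j\in J}$ exceeds $r$'' are open in $(z_1,\dots,z_\ell)$, so there is $\e>0$ such that every $\e$-perturbation of $(z_1,\dots,z_\ell)$ has the same Cech complex as $X$. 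On the other hand, the set of configurations for which $\bigcup_{k}\overline{B(z_k,r)}$ fails to be nondegenerate is a closed subset of $(\R^d)^\ell$ of measure zero: writing $\partial B(z_j,r)$ as the regular zero set of $x\mapsto|x-z_j|^2-r^2$, nondegeneracy requires that at each point of $\bigcap_{j\in J}\partial B(z_j,r)$ the gradients $\{x-z_j\}_{j\in J}$ be linearly independent — which forces emptiness when $|J|>d$ — and a parametric transversality argument (translating $z_j$ moves the defining function and is a submersion away from $x=z_j$, so Sard's theorem applies) shows that almost every configuration is nondegenerate. Choosing a nondegenerate configuration within distance $\e$ of $(z_1,\dots,z_\ell)$ gives the desired $X'$.

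The only genuinely delicate point is Step 1: a careless small perturbation of the \emph{original} centers could eliminate a tangential intersection and thereby split a connected complex into several components (or otherwise change its homotopy type), so one must first ``inflate'' every degenerate intersection into an honest overlap and only afterwards perturb for transversality. Everything else is routine — the Nerve Lemma (cf. Lemma \ref{lemma:convex}, applied here to the convex cover of $X$ by Euclidean balls) identifies the homotopy type with that of the Cech complex, the smallest-enclosing-radius description of nonempty intersections is elementary, and genericity of transversality of a finite family of spheres is a standard application of Sard's theorem.
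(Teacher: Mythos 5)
Your proof is correct, but it takes a genuinely different route from the paper's. The paper views $\Pc = \bigcup_k B(y_k,r)$ as the sublevel set $\{f\le r\}$ of the semialgebraic distance function $f(x)=\min_k\|x-y_k\|$ and appeals to a result on monotone semialgebraic families (\cite[Lemma 16.17]{BPR}) to conclude that for small $\epsilon>0$ the inclusion $\{f\le r\}\hookrightarrow\{f\le r+\epsilon\}$ is a homotopy equivalence; it then observes that, the centers being fixed, only finitely many radii produce degenerate configurations, so a small inflation of the radius already yields a nondegenerate complex. You instead fix the radius and perturb the centers in two stages: first an overall contraction of the configuration toward a point, which upgrades every nonempty closed intersection to a nonempty open one and leaves every empty intersection empty (quantified cleanly via the smallest enclosing radius), hence preserves the nerve and, by the Nerve Lemma, the homotopy type; then, with the nerve now stable under small perturbations, a generic perturbation of the centers achieves transversality via a parametric transversality/Sard argument. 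Geometrically the two moves are cousins (inflating the radius and contracting the configuration both shrink the relative scale), but your argument is more elementary and self-contained, replacing the invocation of BPR by the Nerve Lemma and Sard's theorem, at the cost of being slightly longer; and it correctly isolates the one genuinely delicate point, namely that a naive generic perturbation of the original (possibly degenerate) centers could destroy tangential intersections and change the homotopy type, which is exactly what your Step 1 preempts. One tiny caveat worth flagging but not a gap: you apply the Nerve Lemma to a cover by closed balls, which is fine for convex closed subsets of $\R^d$ (and consistent with the paper's writing $\Pc=\{f\le r\}$), but is a different version of the lemma than the open-ball version recorded in Lemma \ref{lemma:convex}.
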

\begin{proof}Given  a possibly degenerate $\Pc=\bigcup_{k=1}^{\ell}B(y_k,r)$, let $f:\R^d\to \R$ be the semialgebraic and continuous function defined by
\be f(x)=d(x, \{y_1, \ldots, y_k\})=\min_{k}\|y_k-x\|,\ee and observe that:
\be \bigcup_{k=1}^{\ell}B(y_k,r)=\{f\leq r\}.\ee
We consider now the semialgebraic, monotone family $\{X(r+\epsilon)=\{f\leq r+\epsilon\}\}_{\epsilon\geq 0}$. By \cite[Lemma 16.17]{BPR} for $\epsilon>0$ the inclusion $X(r)\hookrightarrow X(r+\epsilon)$ is a homotopy equivalence. It suffices therefore to show that for $\epsilon>0$ small enough $X(r+\epsilon)$ is nondegenerate; this follows from the fact that given points $y_1, \ldots, y_\ell\in \R^d$, for every $1\leq k\leq d$ and $J=\{j_1, \ldots, j_k\}\in \genfrac{\{ }{\}}{0pt}{}{\ell}{k}$ there are only finitely many $r>0$ such that the intersection $\bigcap_{j\in J}\partial B(y_j, r)$ is nontransversal (and the number of possible multi-indices to consider is also finite). 
\end{proof}
The following Proposition plays an important role in all asymptotic stability arguments. 
\begin{prop}\label{prop:stability} Let $(\M, g)$ be a compact Riemannian manifold of dimension $d$ and $p\in \M$.
Let $\Pc\subset \R^d$ be a nondegenerate complex such that:
\be\Pc=\bigcup_{j=1}^\ell B(y_j, r)\subset B(0, R')\ee
for some points $y_1, \ldots, y_\ell \in \R^d$ and $r, R'>0$. Given $\alpha>0$ set $R=\frac{\alpha R'}{r}$ and consider the sequence of maps:
\be \psi_n:\B(p, Rn^{-1/d})\xrightarrow{\mathrm{exp}_p^{-1}}B_{T_pM}(0, Rn^{-1/d})\xrightarrow{\frac{r}{\alpha}n^{1/d}}B_{T_pM}(0, R')\simeq B(0, R'). \ee
Denoting by $\varphi_n$ the inverse of $\psi_n$, there exist $\e_0>0$ and $n_0>0$ such that if $\|\tilde{y}_k-y_k\|\leq \e_0$ for every $k=1, \ldots,\ell$ then for $n\geq n_0$ we have:
\be\bigcup_{k=1}^\ell\B(\varphi_n(\tilde y_k), \alpha n^{-1/d})\simeq \bigcup_{k=1}^\ell B(y_k, r). \ee
\end{prop}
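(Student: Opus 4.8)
The plan is to reduce the statement to a purely combinatorial comparison of nerves and then to read off the needed stability from the nondegeneracy hypothesis. First I would replace both sides by their Čech complexes: once $n$ is large enough that $\alpha n^{-1/d}<r_0$, Lemma~\ref{lemma:convex} shows that $\bigcup_{k=1}^{\ell}\hat{B}(\varphi_n(\tilde y_k),\alpha n^{-1/d})$ is homotopy equivalent to its Čech complex, i.e. to the nerve $\mathcal{N}_n=\{\,J\subseteq\{1,\dots,\ell\}:\ \bigcap_{j\in J}\hat{B}(\varphi_n(\tilde y_j),\alpha n^{-1/d})\neq\emptyset\,\}$, while the Nerve Lemma in $\R^d$ shows that $\bigcup_{k=1}^{\ell}B(y_k,r)$ is homotopy equivalent to the nerve $\mathcal{N}_0=\{\,J:\ \bigcap_{j\in J}B(y_j,r)\neq\emptyset\,\}$. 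Hence it suffices to produce $\e_0,n_0>0$ so that $\mathcal{N}_n=\mathcal{N}_0$ whenever $\|\tilde y_k-y_k\|\le\e_0$ for all $k$ and $n\ge n_0$.

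The only place nondegeneracy enters is the following ``no tangency'' claim, which I would establish next: \emph{for every $J$, either $\bigcap_{j\in J}B(y_j,r)\neq\emptyset$ or $\bigcap_{j\in J}\overline{B(y_j,r)}=\emptyset$}. Indeed, given $x\in\bigcap_{j\in J}\overline{B(y_j,r)}$, put $J'=\{\,j\in J:\|x-y_j\|=r\,\}$; if $J'=\emptyset$ then $x\in\bigcap_{j\in J}B(y_j,r)$, and otherwise $x\in\bigcap_{j\in J'}\partial B(y_j,r)$, so by nondegeneracy this intersection is transversal at $x$, the vectors $x-y_j$ ($j\in J'$) are linearly independent, and one may pick $v$ with $\langle x-y_j,v\rangle<0$ for all $j\in J'$; then $\frac{d}{dt}\|x+tv-y_j\|^2<0$ at $t=0$ for $j\in J'$, while the strict inequalities $\|x-y_j\|<r$ for $j\in J\setminus J'$ persist by continuity, so $x+tv\in\bigcap_{j\in J}B(y_j,r)$ for all small $t>0$. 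From this claim and a properness/compactness argument I would then fix a constant $\delta=\delta(\Pc)>0$ such that: for every $J\in\mathcal{N}_0$ there is $w_J\in\R^d$ with $\|w_J-y_j\|\le r-\delta$ for all $j\in J$; and for every $J\notin\mathcal{N}_0$ one has $\max_{j\in J}\|w-y_j\|\ge r+\delta$ for all $w\in\R^d$.

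Finally I would transport intersection patterns through the maps $\psi_n$. It is convenient to extend $\psi_n$ to a slightly enlarged normal chart $\hat{B}(p,C_0Rn^{-1/d})$, which for $n$ large lies below the injectivity radius of $M$ and contains all the balls $\hat{B}(\varphi_n(\tilde y_k),\alpha n^{-1/d})$ together with any common point of a subfamily of them. Writing $\lambda_n=\frac{r}{\alpha}n^{1/d}$ we have $\varphi_n(w)=\exp_p(w/\lambda_n)$, and since, uniformly over $p\in M$, $\exp_p$ distorts distances by a factor $1+O(\rho^2)$ on $B_{T_pM}(0,\rho)$ (a consequence of the expansion $g_{ij}(v)=\delta_{ij}+O(|v|^2)$ in normal coordinates together with compactness of $M$), there are numbers $\e_n\to0$ with $(1-\e_n)\|w_1-w_2\|\le\lambda_n\,d_g(\varphi_n(w_1),\varphi_n(w_2))\le(1+\e_n)\|w_1-w_2\|$. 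Because $q\in\hat{B}(\varphi_n(\tilde y_k),\alpha n^{-1/d})$ is equivalent to $\lambda_n d_g(q,\varphi_n(\tilde y_k))<r$, this gives $B(\tilde y_k,r-\eta_n)\subseteq\psi_n\big(\hat{B}(\varphi_n(\tilde y_k),\alpha n^{-1/d})\big)\subseteq B(\tilde y_k,r+\eta_n)$ with $\eta_n=\frac{r\e_n}{1-\e_n}\to0$; this is precisely where the choice $R=\frac{\alpha R'}{r}$ is used, since it makes radius $\alpha n^{-1/d}$ on the manifold correspond to radius $r$ in the limiting Euclidean picture. Now choose $\e_0<\delta/4$ and $n_0$ so large that $\eta_n<\delta/4$ for $n\ge n_0$. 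If $J\in\mathcal{N}_0$ then $\|w_J-\tilde y_j\|\le r-\delta+\e_0<r-\eta_n$ for all $j\in J$, so $\varphi_n(w_J)$ lies in $\bigcap_{j\in J}\hat{B}(\varphi_n(\tilde y_j),\alpha n^{-1/d})$ and $J\in\mathcal{N}_n$. Conversely, if $J\in\mathcal{N}_n$, take $q=\varphi_n(w)$ in that intersection; then $\|w-\tilde y_j\|<r+\eta_n$, hence $\|w-y_j\|<r+\eta_n+\e_0<r+\delta$ for all $j\in J$, which by the choice of $\delta$ forces $J\in\mathcal{N}_0$. Thus $\mathcal{N}_n=\mathcal{N}_0$, which completes the argument.

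I expect the main obstacle to be the ``no tangency'' claim: it is the only step that uses the nondegeneracy hypothesis and the only one carrying real content, whereas the rest of the argument is soft, the one remaining point needing mild care being the bookkeeping that keeps every perturbed Riemannian ball and every relevant common point inside a normal coordinate chart on which the near-isometry estimate for $\exp_p$ is available.
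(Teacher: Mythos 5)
Your proof is correct and takes essentially the same route as the paper: reduce the statement to matching the intersection combinatorics (nerves) of the two covers, then transport intersection patterns through $\psi_n$ using the uniform convergence of the rescaled geodesic distance to the Euclidean one, invoking Lemma~\ref{lemma:convex} and the Nerve Lemma at the end. The one place you go further is your explicit ``no tangency'' claim, which makes precise exactly how nondegeneracy is used; the paper invokes this only implicitly (in its case (2) contradiction argument the limit only yields $\|\overline{y}-y_j\|\le r$, so one really needs $\bigcap_{j\in J}\overline{B(y_j,r)}=\emptyset$ rather than just the open intersection being empty), and your argument is cleaner for spelling that out and for replacing the paper's per-index constants $\e_J$ and sequential contradiction with a single uniform $\delta$ and a direct estimate.
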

\begin{proof}

For $k\leq d$ and for every $J=\{j_1, \ldots, j_k\}\in \genfrac{\{ }{\}}{0pt}{}{\ell}{k}$ either one of these possibilities can verify:
\begin{enumerate}
\item $\bigcap_{j\in J}B(y_j, r)\neq \emptyset$, in which case, by nondegeneracy, there exists $\e_J$ and $y_J$ such that $\|y_J-y_j\|<r-\e_J$ for all $j\in J$;
\item $\bigcap_{j\in J}B(y_j, r)= \emptyset$, in which case there is no $y$ solving $\|y-y_j\|\leq r$ for all $j\in J$.
\end{enumerate}

Since the sequence of maps $d_n:B(0, R')\times B(0, R')\to \R$ defined by
\be\frac{rn^{1/d}}{\alpha}\cdot d_n(z_1, z_2)=d_{\M}(\varphi_n(z_1), \varphi_n(z_2))\ee
converges uniformly to $d_{\R^d}$, then for every $\delta>0$ there exists $n_1>0$ such that for all pairs of points $z_1, z_2\in B(0, R')$ and for all $n\geq n_1$ we have:
\be\label{eq:delta} \left|\frac{rn^{1/d}}{\alpha}\cdot d_\M(\varphi_n(z_1), \varphi_n(z_2))-\|z_1-z_2\|\right|\leq \delta.\ee
For every index set $J$ satisfying condition (1) above, choosing $\delta=\frac{\e_{J} r}{3\alpha}$ and setting $\e_J=\delta$, the previous inequality \eqref{eq:delta} implies that, if $\|\tilde{y}_k-y_k\|<\e_J$ for every $k=1, \ldots, \ell$, then for $n\geq n_J$:
\be d_\M(\varphi_n(\tilde y_j), \varphi_n(y_J))<\alpha n^{-1/d}.\ee
This means that the combinatorics of the covers $\{B(y_j, r)\}_{j\in J}$ and $\{\B(\varphi_n(\tilde y_j), \alpha n^{-1/d})\}_{j\in J}$ are the same if $\|y_j-\tilde{y}_j\|<\e_J$ for $j\in J$ and $n\geq n_J$.

Let us consider now an index set $J$ satisfying condition (2) above. We want to prove that there exists $\e_J>0$ and $n_J$ such that if $\|\tilde{y}_j-y_j\|<\e_J$ for all $j\in J$, then for $n\geq n_J$ the intersection $\cap_{j\in J}\B(\varphi_n(\tilde y_j), \alpha n^{-1/d})$ is still empty. We argue by contradiction and assume there exist a sequence of points $x_n\in \B(p, Rn^{-1/d})$ and for $j\in J$ points $y_{j,n}\in B(0, R')$ with $\|y_{i,n}-y_j\|\leq \frac{1}{n}$ such that for all $j\in J$ and all $n$ large enough:
\be \label{eq:ineq}d_\M(x_n, \varphi_n(y_{j,n}))<\alpha n^{-1/d}.\ee
We call $y_n=\psi_n(x_n)$ and assume that (up to subsequences) it converges to some $\overline {y}\in B(0, R')$.
Using again the uniform convergence of $d_n$ to $d_{\R^d}$, the inequality \eqref{eq:ineq} would give:
\be r>\lim_{n\to \infty}\frac{n^{1/d} r}{\alpha}\cdot d_\M(x_n, \varphi_n(y_{j,n}))=\|\overline{y}-y_j\|\quad \forall j\in J\ee
which gives the contradiction $\overline{y}\in \bigcap_{j\in J}B(y_j, r)= \emptyset$.

Set now $n_1=\max_{J\in \genfrac{\{ }{\}}{0pt}{}{\ell}{k}, k\leq d}n_J$ and $\e_0=\min_{J\in \genfrac{\{ }{\}}{0pt}{}{\ell}{k}, k\leq d}\e_J$. We have proved that, if $\|\tilde{y}_j-y_j\|<\e_0$ for all $j=1, \ldots, \ell$, then for all $n\geq n_1$ the two open covers $\{B(y_j, r)\}_{j\in J}$ and $\{\B(\varphi_n(\tilde y_j), \alpha n^{-1/d})\}_{j\in J}$ have the same combinatorics. In particular their Cech complex is the same. Moreover, Lemma \ref{lemma:convex} implies that for a possibly larger $n_0\geq n_1$ all the balls $\B(x, \alpha n^{-1/d})$ are strictly convex in $\M$; consequently, by the Nerve Lemma, for $n$ larger than such $n_0$ these two open covers are each one homotopy equivalent to their Cech complexes, hence they are themselves homotopy equivalent.
\end{proof}

\subsection{Measure theory}

The following lemma will be used in
the proof of Theorem \ref{thm:mainpoisson}.
This lemma and its proof are essentially in \cite[Thm. 4.2 (2)]{SarnakWigman}, but we provide
a proof to make the paper more self-contained and to ensure that it is clear this result is purely measure-theoretic.

\begin{lemma}\label{lemma:cvg}
	Let $\mu_{\lambda} = \sum a_{\lambda,k} \delta_k$ be a one-parameter family of random probability measures on $\NN$, and let $\mu = \sum a_k \delta_k$ be a deterministic probability measure on $\NN$.	Assume that for every $k \in \NN$ $a_{\lambda,k} \rightarrow a_k$ in probability as $\lambda \rightarrow \infty$. Then $ \mu_\lambda \rightarrow \mu $ in probability,
i.e., for every $\varepsilon >0$ we have
	$$\lim_{\lambda \rightarrow \infty} \PP \{ d(\mu_\lambda,\mu) \geq \e \} = 0,$$
where $d$ denotes the total variation distance.
\end{lemma}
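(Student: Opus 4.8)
The plan is to bound the total variation distance $d(\mu_\lambda,\mu)$ by a quantity that can be made small in probability, exploiting the fact that $\mu$ is a probability measure on $\NN$ and hence has a tail that can be truncated. Recall that for two probability measures $\mu_1,\mu_2$ on a countable set, $d(\mu_1,\mu_2)=\frac12\sum_k|\mu_1(\{k\})-\mu_2(\{k\})|=\sum_{k:\,\mu_1(\{k\})>\mu_2(\{k\})}(\mu_1(\{k\})-\mu_2(\{k\}))$. First I would fix $\e>0$ and, using $\sum_k a_k=1$, choose $N$ so large that $\sum_{k>N}a_k<\e/4$. For the finitely many indices $k\le N$, the hypothesis gives $a_{\lambda,k}\to a_k$ in probability, so for $\lambda$ large the event $E_\lambda=\{\,|a_{\lambda,k}-a_k|<\e/(4N)\ \text{for all }k\le N\,\}$ has probability at least $1-\e$ (intersect finitely many high-probability events).

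On the event $E_\lambda$ I would estimate
\be
d(\mu_\lambda,\mu)=\frac12\sum_{k}|a_{\lambda,k}-a_k|
\le \frac12\sum_{k\le N}|a_{\lambda,k}-a_k|+\frac12\sum_{k>N}a_{\lambda,k}+\frac12\sum_{k>N}a_k.
\ee
The first sum is at most $\frac12\cdot N\cdot\frac{\e}{4N}=\e/8$ on $E_\lambda$; the third sum is at most $\e/8$ by the choice of $N$. The only remaining term is the tail mass $\sum_{k>N}a_{\lambda,k}$ of the \emph{random} measure, which is not directly controlled by the hypothesis. To handle it, use that $\mu_\lambda$ is a probability measure: $\sum_{k>N}a_{\lambda,k}=1-\sum_{k\le N}a_{\lambda,k}$. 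On $E_\lambda$ we have $\sum_{k\le N}a_{\lambda,k}\ge \sum_{k\le N}a_k-N\cdot\frac{\e}{4N}=\big(1-\sum_{k>N}a_k\big)-\e/4\ge 1-\e/2$, hence $\sum_{k>N}a_{\lambda,k}\le \e/2$. Combining, on $E_\lambda$ we get $d(\mu_\lambda,\mu)\le \e/8+\e/4+\e/8=\e/2<\e$, so $\PP\{d(\mu_\lambda,\mu)\ge\e\}\le \PP(E_\lambda^c)\to 0$ as $\lambda\to\infty$. (One should of course rename the target accuracy at the start, say prove $\PP\{d(\mu_\lambda,\mu)\ge\e\}\to 0$ by running the above with $\e$ replaced by a smaller constant; this is cosmetic.)

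The only subtle point — the "main obstacle," such as it is — is precisely the control of the random tail $\sum_{k>N}a_{\lambda,k}$: pointwise-in-$k$ convergence in probability does not by itself prevent mass from escaping to infinity, and the argument genuinely needs the normalization $\sum_k a_{\lambda,k}=1$ together with the tightness of the limit $\mu$. Everything else is a routine finite union bound over the $N$ indices $k\le N$ and the triangle inequality for total variation. No measurability issues arise since $\NN$ is countable and $d(\mu_\lambda,\mu)$ is a countable supremum of measurable functions of the $a_{\lambda,k}$.
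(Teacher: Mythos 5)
Your proof is correct and follows essentially the same strategy as the paper's: truncate the tail of $\mu$ at a finite index, control the finitely many coefficients below the truncation via the hypothesis of convergence in probability, and use the normalization $\sum_k a_{\lambda,k}=1$ together with the smallness of $\sum_{k>N}a_k$ to prevent mass from escaping in the random tail. The only cosmetic difference is that you work directly with the $\ell^1$ formula $d(\mu_\lambda,\mu)=\tfrac12\sum_k|a_{\lambda,k}-a_k|$ and conflate the accuracy parameter with the probability parameter, whereas the paper keeps them as two separate small quantities and takes a supremum over arbitrary subsets $A\subset\NN$ at the end; these are equivalent, and you correctly flag the parameter issue as a cosmetic renaming.
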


\begin{proof}
Let $\delta > 0$ be arbitrary.

Since $\mu$ is a probability measure on $\NN$,
there exists $K$ such that
\begin{equation}\label{eq:tail}
\sum_{k \geq K} a_k < \frac{\e}{4} .
\end{equation}

We have
$$\PP \left\{ |a_{\lambda,k} - a_k| > \frac{\e}{4K} \right\} < \frac{\delta}{2K},$$
which implies (by a union bound)
\begin{equation}\label{eq:abs}
\PP \left\{  \sum_{k < K} \left| a_{\lambda,k} - a_k \right| > \frac{\e}{4} \right\} < \frac{\delta}{2},
\end{equation}
and also (by the triangle inequality)
\begin{equation}\label{eq:triangle}
 \PP \left\{ \left| \sum_{k < K} a_{\lambda,k} - \sum_{k < K} a_k \right| > \frac{\e}{4} \right\} < \frac{\delta}{2},
\end{equation}
for $\lambda \geq \lambda_0$.

The estimate (\refeq{eq:triangle}) 
implies an estimate for the tails:
\begin{equation}\label{eq:abstail}
\PP \left\{ \left| \sum_{k \geq K} a_{\lambda,k} - \sum_{k \geq K} a_k \right| > \frac{\e}{4} \right\} < \frac{\delta}{2},
\end{equation}
since
$$ \left| \sum_{k < K} a_{\lambda,k} - \sum_{k < K} a_k \right| = \left| \sum_{k \geq K} a_{\lambda,k} - \sum_{k \geq K} a_k \right| ,$$
which follows from $\mu_\lambda$ and $\mu$ being probability measures.

For any $\lambda > \lambda_0$, we then have
\begin{equation}\label{eq:arn}
\PP \left\{ \sum_{k \geq K} a_{\lambda,k} > \frac{\e}{2} \right\} < \frac{\delta}{2}.
\end{equation}
Indeed, if 
$$ \sum_{k \geq K} a_{\lambda,k} > \frac{\e}{2}$$
then equation (\refeq{eq:tail}) gives
$$\left| \sum_{k < K} a_{\lambda,k} - \sum_{k < K} a_k \right| > \frac{\e}{4},$$
and (\refeq{eq:arn}) then follows from (\refeq{eq:abstail}).

In order to estimate the total variation distance between $\mu_\lambda$ and $\mu$, 
let $A \subset \NN$ be arbitrary. 
We have:
\begin{align*}
\left| \sum_{k \in A} a_{\lambda,k} - \sum_{k \in A} a_k \right| &=
  \left| \sum_{k \in A, k < K } (a_{\lambda,k} - a_k) + \sum_{k \in A, k \geq K} a_{\lambda,k} - \sum_{k \in A, k \geq K} a_{k} \right| \\
  &\leq   \sum_{k \in A, k < K }  \left|a_{\lambda,k} - a_k\right| +  \sum_{k \in A, k \geq K} a_{\lambda,k} + \sum_{k \in A, k \geq K} a_{k} \\
  & \leq \sum_{k < K } \left| a_{\lambda,k} - a_k \right| +  \sum_{k \geq K} a_{\lambda,k} + \sum_{k \geq K} a_{k} \\
  & \leq \sum_{k < K } \left| a_{\lambda,k} - a_k \right| +  \sum_{k \geq K} a_{\lambda,k} + \frac{\e}{4}.
\end{align*}

Using a union bound,
this implies
$$\PP \left\{ \left| \sum_{k \in A} a_{\lambda,k} - \sum_{k \in A} a_k \right| > \e \right\}
\leq \PP \left\{ \sum_{k < K} \left| a_{\lambda,k} - a_k \right| > \frac{\e}{4} \right\} + \PP \left\{ \sum_{k \geq K} a_{\lambda,k} > \frac{\e}{2} \right\},$$
which is less than $\delta$ by (\refeq{eq:abs}) and (\refeq{eq:arn}).

This implies that for every $\delta > 0$
we have, for all $\lambda$ sufficiently large,
$$\PP \left\{ \sup_{A \subset \NN} \left| \mu_\lambda (A) - \mu(A) \right| \geq \e \right\}
\leq \delta,$$
i.e. we have shown
$$\lim_{\lambda \rightarrow \infty} \PP \left\{ d(\mu_\lambda,\mu) \geq \e \right\} = 0.$$
\end{proof}

\subsection{The ergodic theorem}
The proof of Proposition \ref{prop:ergodic}
uses the following special case of
the $d$-dimensional ergodic theorem.
We follow \cite[Ch. 2]{MeesterRoy}
and \cite[Sec. 12.2]{DV-J}).

\begin{thm}[Ergodic Theorem]\label{thm:ergodic}
Let $(\Omega,\mathcal{F},\rho)$
be a probability space, 
and let $T_x$, $x \in \R^d$
be an $\R^d$-action on $\Omega$.
Let $f \in L^1(\rho)$,
and suppose further that 
the action of $T_x$ on $\Omega$ is ergodic.
Then we have
$$\frac{1}{\Vol(B_R)} \int_{B_R} f(T_x(\omega)) dx \rightarrow \EE \, f(\omega) \text{ a.s. and in } L^1 $$
as $R \rightarrow \infty$.
\end{thm}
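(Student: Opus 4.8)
The plan is to transfer the statement to the lattice $\Z^d$, where the multiparameter pointwise ergodic theorem is classical, deduce a.s.\ and $L^1$ convergence of the ball averages to \emph{some} limit, and then identify that limit as $\EE\,f$ by invoking ergodicity of the full $\R^d$-action. Throughout write $A_Rf(\omega)=\frac{1}{\Vol(B_R)}\int_{B_R}f(T_x(\omega))\,dx$, and recall that the action preserves $\rho$.

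For \textbf{Step 1}, set $g(\omega)=\int_{[0,1)^d}f(T_u(\omega))\,du$. By invariance of $\rho$ and Fubini, $g\in L^1(\rho)$ and $\EE\,g=\EE\,f$, while $\int_{z+[0,1)^d}f(T_x(\omega))\,dx=g(T_z(\omega))$ for every $z\in\Z^d$. Assuming first $f\geq 0$ and setting $\Lambda_R^{-}=\{z\in\Z^d:z+[0,1)^d\subseteq B_R\}$ and $\Lambda_R^{+}=\{z\in\Z^d:(z+[0,1)^d)\cap B_R\neq\emptyset\}$, one has
\[
\frac{1}{\Vol(B_R)}\sum_{z\in\Lambda_R^{-}}g(T_z(\omega))\ \leq\ A_Rf(\omega)\ \leq\ \frac{1}{\Vol(B_R)}\sum_{z\in\Lambda_R^{+}}g(T_z(\omega)).
\]
Since $\Lambda_R^{+}\setminus\Lambda_R^{-}$ is contained in a shell of width $\sqrt{d}$ about $\partial B_R$, we have $|\Lambda_R^{\pm}|/\Vol(B_R)\to 1$; in particular $(\Lambda_R^{\pm})_{R>0}$ is a F{\o}lner (van Hove) sequence in $\Z^d$, so the classical multiparameter pointwise ergodic theorem for the measure-preserving action $z\mapsto T_z$ applied to $g$ (as in \cite[Ch.~2]{MeesterRoy}, \cite[Sec.~12.2]{DV-J}) shows that the two outer averages converge, a.s.\ and in $L^1$, to the common limit $\bar g:=\EE[g\mid\mathcal{I}]$, where $\mathcal{I}$ is the $\sigma$-algebra of $\Z^d$-invariant events. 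The sandwich then gives $A_Rf\to\bar g$ a.s.\ and in $L^1$; the general case follows from $f=f^{+}-f^{-}$. Note that we do not assert that $\mathcal{I}$ is $\rho$-trivial.

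For \textbf{Step 2}, I identify $\bar g$ independently. For $f\in L^2(\rho)$, the von Neumann mean ergodic theorem for the (strongly continuous) unitary $\R^d$-representation $(U_xh)(\omega)=h(T_x(\omega))$ gives $A_Rf\to Pf$ in $L^2$, with $P$ the orthogonal projection onto the subspace of vectors fixed by the entire representation; ergodicity of the $\R^d$-action means this subspace consists of constants, so $A_Rf\to\EE\,f$ in $L^2$, hence in $L^1$. For general $f\in L^1(\rho)$, truncating at height $N$ and using that every $A_R$ is an $L^1$-contraction upgrades this to $A_Rf\to\EE\,f$ in $L^1$. Comparing with Step 1, the a.s.\ limit $\bar g$ coincides with the $L^1$ limit $\EE\,f$, completing the proof.

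The only inputs with real content --- the multiparameter pointwise ergodic theorem on $\Z^d$ and the mean ergodic theorem for $\R^d$-actions --- are standard and are quoted from the references; the pieces specific to the present situation are the elementary van Hove estimate $|\Lambda_R^{\pm}|/\Vol(B_R)\to 1$ and the routine truncation argument. Accordingly, the main ``difficulty'' here is purely organizational, which is why a reference to \cite{MeesterRoy,DV-J} is adequate.
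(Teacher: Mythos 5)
The paper does not actually prove Theorem~\ref{thm:ergodic}; it states it and refers the reader to \cite[Ch.~2]{MeesterRoy} and \cite[Sec.~12.2]{DV-J} for the continuous ergodic theorem, using it as a black box in the proof of Proposition~\ref{prop:ergodic}. Your proposal therefore does something the paper never attempts: it derives the continuous result from the discrete one. The overall architecture --- discretize $f$ into $g(\omega)=\int_{[0,1)^d}f(T_u\omega)\,du$, sandwich the ball averages by lattice sums, apply the $\Z^d$ pointwise ergodic theorem, and then identify the limit via the mean ergodic theorem together with $\R^d$-ergodicity --- is a standard and correct way to prove exactly this statement, and all the individual steps (the Fubini computations for $g$, the $L^1$-contraction/truncation argument, the identification of a.s.\ and $L^1$ limits) are sound.

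One step is stated too loosely: you justify applying the pointwise $\Z^d$ ergodic theorem along $\Lambda_R^{\pm}$ by noting they form a F{\o}lner (van Hove) sequence, but the F{\o}lner property alone is \emph{not} sufficient for a.s.\ convergence (it suffices for the $L^1$ part, i.e.\ the mean ergodic theorem, but not for pointwise --- one needs a Tempelman or Lindenstrauss-tempered condition). The conclusion is nevertheless correct and the fix is cheap: since $B_{R-\sqrt d}\cap\Z^d\subseteq\Lambda_R^{-}\subseteq\Lambda_R^{+}\subseteq B_{R+\sqrt d}\cap\Z^d$ and $g\ge 0$, one can sandwich once more by genuine lattice balls, to which Wiener's pointwise ergodic theorem applies directly, and then pass to the limit as $R\to\infty$. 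You should also make explicit that ``measurable $\R^d$-action'' means jointly measurable, which is both what the application needs (so that $x\mapsto f(T_x\omega)$ is integrable) and what guarantees, via the Koopman--von Neumann lemma, the strong continuity of the unitary representation that you invoke in Step~2. With these two clarifications the proof is complete and self-contained in a way the paper's citation-only treatment is not.
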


Let us explain the terminology appearing in the statement of this theorem.  An \emph{$\R^d$-action}
$T_x$, $x \in \R^d$
is a group of invertible, commuting, measure-preserving transformations
acting measurably on 
a probability space $(\Omega,\mathcal{F},\rho)$ and indexed by $\R^d$.
An $\R^d$-action $T_x$ is said to be \emph{ergodic}
if any invariant event has probability
either zero or one.

For our application of the ergodic theorem (see the proof of Proposition \ref{prop:ergodic} below), 
the $\R^d$-action
$T_x$ will simply be translation by $x$ 
acting on the Poisson process
(this case is known to be ergodic \cite[Prop. 2.6]{MeesterRoy}).

    \subsection{Component counting function and the integral geometry sandwiches}

\begin{defi}[Component counting function]\label{def:ccf}
Let $Y_1, Y_2 \subset X$ and $Z$ be topological spaces (in the case of our interest they will be homotopy equivalent to finite simplicial complexes). We denote by $\N(Y_1, Y_2;[Z])$ the number of connected components of $Y_1$ \emph{entirely contained} in the interior of $Y_2$ and which have the same homotopy type as $Z$. Similarly, we denote by $\N^*(Y_1, Y_2;[Z])$ the number of connected components of $Y_1$ which \emph{intersect} $Y_2$ and which have the same homotopy type as $Z$.
\end{defi}

    \begin{thm}[Integral Geometry Sandwich]\label{thm:IGS}

Let $\Pc$ be a generic geometric complex in $\R^d$ 
and fix $\gamma \in \G$.
Then for $0<r<R$
\be 
\int_{B_{R-r}} \frac{\N(\Pc, B(x, r); \gamma)}{\Vol \left( B_r \right)} dx
\leq \N(\Pc, B_R; \gamma) \leq \int_{B_{R+r}} \frac{\N^*(\Pc, B(x, r); \gamma)}{\Vol \left( B_r \right)} dx.
\ee 
\end{thm}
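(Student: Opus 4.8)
The plan is to prove both inequalities by a Fubini-type exchange of integration and counting, combined with a simple geometric containment argument. The key observation is that for any geometric complex $\Pc$, averaging the count $\N(\Pc, B(x,r); \gamma)$ over translates $x$ of a small ball $B(x,r)$ reconstructs, with a multiplicity equal to the volume of a single small ball, the count of components of $\Pc$ satisfying a suitable inclusion condition. The factor $\Vol(B_r)$ in the denominator is exactly this multiplicity: a single connected component $s$ of $\Pc$ contributes to $\N(\Pc, B(x,r); \gamma)$ (i.e. is entirely contained in the interior of $B(x,r)$) for $x$ ranging over an open set whose volume we must control from above and below.

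For the \emph{lower} bound, I would argue as follows. Write $\N(\Pc, B_R; \gamma) = \sum_s \mathbbm{1}\{[s]=\gamma,\ s\subset \mathrm{int}(B_R)\}$, the sum over connected components $s$ of $\Pc$. Exchanging sum and integral,
\[
\int_{B_{R-r}} \frac{\N(\Pc, B(x,r);\gamma)}{\Vol(B_r)}\,dx
= \sum_{s:\ [s]=\gamma} \frac{1}{\Vol(B_r)} \int_{B_{R-r}} \mathbbm{1}\{s\subset \mathrm{int}(B(x,r))\}\,dx .
\]
Now if $s\subset \mathrm{int}(B(x,r))$ for some $x\in B_{R-r}$, then $s$ lies within distance $r$ of a point of $B_{R-r}$, hence $s\subset \mathrm{int}(B_R)$; so every term in the sum on the right is supported (as a function of $s$) only on components counted by $\N(\Pc,B_R;\gamma)$. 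Moreover, for each fixed such $s$, the set $\{x : s\subset \mathrm{int}(B(x,r))\}$ is an open subset of $\R^d$ of volume at most $\Vol(B_r)$: indeed, since $\mathrm{diam}(s)$ is nonnegative, $\{x: s\subset B(x,r)\}\subset \bigcap_{y\in s} B(y,r)$, which has volume at most $\Vol(B_r)$ (it is an intersection of balls of radius $r$, each of volume $\Vol(B_r)$). Therefore each term is at most $1$, and discarding terms where this set is empty (or where $s\not\subset\mathrm{int}(B_R)$) only decreases the sum, giving the left inequality. Genericity of $\Pc$ ensures there are no boundary effects (components tangent to a sphere $\partial B(x,r)$ occur only for $x$ in a measure-zero set), so the indicator is well-defined up to null sets.

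For the \emph{upper} bound, the idea is dual: I would show that every component $s$ of $\Pc$ with $[s]=\gamma$ and $s\subset\mathrm{int}(B_R)$ contributes \emph{at least} $\Vol(B_r)$ to $\int_{B_{R+r}} \N^*(\Pc,B(x,r);\gamma)\,dx$. Exchanging sum and integral again,
\[
\int_{B_{R+r}} \frac{\N^*(\Pc, B(x,r);\gamma)}{\Vol(B_r)}\,dx
= \sum_{s:\ [s]=\gamma} \frac{1}{\Vol(B_r)} \int_{B_{R+r}} \mathbbm{1}\{s\cap B(x,r)\neq\emptyset\}\,dx .
\]
If $s\subset\mathrm{int}(B_R)$, pick any point $y_0\in s$; then $\{x: s\cap B(x,r)\neq\emptyset\}\supset B(y_0,r)$, and since $y_0\in B_R$ we have $B(y_0,r)\subset B_{R+r}$, so this contributes a full $\Vol(B_r)$ to the integral, i.e. the corresponding term in the sum is $\geq 1$. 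Summing over only those $s$ counted by $\N(\Pc,B_R;\gamma)$ (all terms are nonnegative, so dropping the rest only decreases the right side), we obtain $\N(\Pc,B_R;\gamma)\le \int_{B_{R+r}} \N^*(\Pc,B(x,r);\gamma)/\Vol(B_r)\,dx$, which is the right inequality.

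The main obstacle, such as it is, is purely bookkeeping: one must be careful that the homotopy type $[s]$ is computed correctly for the \emph{whole} component $s$ of $\Pc$ (not of $\Pc\cap B(x,r)$), which is exactly how $\N$ and $\N^*$ are defined in Definition \ref{def:ccf}, so no truncation subtlety arises — the component either sits entirely inside $B(x,r)$ (for $\N$) or merely touches it (for $\N^*$), and in both cases its type is $\gamma$ by hypothesis. The only genuinely geometric inputs are the two elementary volume comparisons $\Vol\big(\bigcap_{y\in s}B(y,r)\big)\le\Vol(B_r)$ and $\Vol(B(y_0,r))=\Vol(B_r)$ together with the containments $B_{R-r}+B_r\subset B_R$ and $B_R + B_r\subset B_{R+r}$; genericity of $\Pc$ is invoked only to ignore the measure-zero set of translates for which a component is tangent to the boundary sphere.
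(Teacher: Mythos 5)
Your proof is correct and follows essentially the same Fubini/component-counting argument as the reference the paper cites in lieu of a proof (\cite{SarnakWigman}, adapted from \cite{NazarovSodin}): expand $\N$ and $\N^*$ as sums of indicators over components of fixed type $\gamma$, exchange sum and integral, and bound the $x$-measure of the set of translates for which a given component is contained in (resp.\ meets) $B(x,r)$. The two geometric inputs you isolate, namely $\Vol\bigl(\bigcap_{y\in s}B(y,r)\bigr)\le\Vol(B_r)$ for the lower bound and $B(y_0,r)\subset B_{R+r}$ for the upper bound, are exactly the volume estimates driving that argument.
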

\begin{thm}[Integral Geometry Sandwich on a Riemannian manifold]\label{thm:IGSmanif}
Let $\U$ be a generic geometric complex on $\M$
and fix $\gamma \in \G$. Then for any $\e>0$
there exists $\eta >0$ such that for every $r < \eta$
\be 
(1-\e) \int_{\M} \frac{\N(\U, \hB(x,r);\gamma)}{\Vol \left( B_r \right)} dx
\leq \N(\U,M; \gamma) \leq (1+\e) \int_{\M} \frac{\N^*(\U, \hB(x,r); \gamma)}{\Vol \left( B_r \right)} dx,
\ee 
where $B_r$ still denotes the Euclidean ball of radius $r$.
\end{thm}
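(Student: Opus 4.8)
The plan is to deduce Theorem \ref{thm:IGSmanif} from the Euclidean Integral Geometry Sandwich (Theorem \ref{thm:IGS}) by a localization argument, using that on sufficiently small scales the Riemannian manifold looks Euclidean up to a multiplicative distortion of the volume form that can be made as close to $1$ as desired. Concretely, I would first fix $\e>0$ and use compactness of $M$ together with the continuity of the metric to choose $\eta>0$ small enough that for every $x \in M$ and every $r<\eta$ the exponential map $\exp_x$ is a diffeomorphism on $\hB(x,r)$ (so that $\hB(x,r)$ is strongly convex and contractible by Lemma \ref{lemma:convex}, and all balls $\hB(y,\alpha n^{-1/d})$ entering $\U$ are strongly convex, allowing the Nerve Lemma to be applied) and, crucially, the Jacobian of $\exp_x$ differs from $1$ by less than the tolerance dictated by $\e$ uniformly over such balls. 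This gives two-sided volume comparisons $(1-\e)^{1/2}\Vol(B_r) \le \Vol(\hB(x,r)) \le (1+\e)^{1/2}\Vol(B_r)$ and, more importantly, the analogous comparison between $\int_M f(x)\, d\Vol_M(x)$ and the ``flat'' integral when $f$ is built from the counting functions.

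Next I would write down the elementary covering-type inequalities that replace Theorem \ref{thm:IGS}'s exact averaging identity in the manifold setting. For the lower bound: every connected component $s$ of $\U$ that is contained in the interior of $M$ (which is all of them, $M$ being closed) and has type $\gamma$ contributes to $\N(\U,\hB(x,r);\gamma)$ for every $x$ such that $\hB(x,r)$ contains $s$; since $s$ is localized (it has diameter $O(n^{-1/d})$, hence $\ll \eta$ for $n$ large), the set of such $x$ has $\Vol_M$-measure at least, say, $(1-\e)\Vol(B_r)$ provided $r$ is chosen in an intermediate range $n^{-1/d} \ll r < \eta$. Integrating over $x\in M$ and dividing by $\Vol(B_r)$ therefore yields $\int_M \N(\U,\hB(x,r);\gamma)/\Vol(B_r)\, dx \ge (1-\e)\N(\U,M;\gamma)$, i.e. the left inequality. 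For the upper bound: a component $s$ of type $\gamma$ that intersects $\hB(x,r)$ must lie within Riemannian distance $r + O(n^{-1/d})$ of $x$; covering $M$ efficiently by balls and using Fubini/double counting, every such $s$ is counted in $\N^*(\U,\hB(x,r);\gamma)$ for a set of $x$ of measure at most $(1+\e)\Vol(B_r)$, giving the right inequality after dividing. The multiplicative $(1\pm\e)$ factors absorb both the volume distortion of $\exp_x$ and the boundary/localization slack.

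The step I expect to be the main obstacle is making the double-counting bookkeeping uniform in $x$ and clean with respect to the manifold's volume form — in particular, controlling the measure of the set $\{x \in M : s \subset \hB(x,r)\}$ (resp. $\{x : s \cap \hB(x,r) \ne \emptyset\}$) by $(1\mp\e)\Vol(B_r)$ simultaneously for all the (finitely many, but $n$-dependent) components $s$, and handling the slight mismatch between ``Riemannian ball of radius $r$'' and ``Euclidean ball of radius $r$'' that enters because the theorem normalizes by $\Vol(B_r)$ rather than $\Vol(\hB(x,r))$. I would address this by passing through $\exp_x$ to reduce each local estimate to the Euclidean inclusion/intersection counts, where the set of admissible centers is literally a Euclidean annular region whose volume is within $(1\pm\e)$ of $\Vol(B_r)$ once $r/\mathrm{diam}(s)$ is large; uniformity then comes from the fact that the exponential-map distortion bound and the lower bound on $r/\mathrm{diam}(s)$ were chosen once and for all depending only on $\e$, $M$, and $\alpha$, not on the individual component. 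Finally I would remark that, although the statement is phrased for a fixed generic $\U$, the genericity is only used to ensure the nerve lemma applies and homotopy types are well-defined, exactly as in Lemma \ref{lemma:convex}.
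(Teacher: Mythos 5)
Your framework—Fubini double counting plus a local Euclidean volume comparison—is the right one (it is essentially the Nazarov--Sodin argument cited by the paper via Sarnak--Wigman), but the directions of both inequalities in your proposal are reversed, and you import an assumption that is not in the statement. Write
$$\int_{\M} \N(\U, \hB(x,r);\gamma)\,dx \;=\; \sum_{s:[s]=\gamma} \Vol\bigl\{x\in\M: s \subset \mathrm{int}\,\hB(x,r)\bigr\}.$$
The left inequality $(1-\e)\int_{\M}\N/\Vol(B_r)\,dx \le \N(\U,\M;\gamma)$ needs an \emph{upper} bound on each $\Vol\{x : s \subset \hB(x,r)\}$, which is cheap: for any $p \in s$ this set is contained in $\hB(p,r)$, and $\Vol(\hB(p,r)) \le (1+\e')\Vol(B_r)$ uniformly for small $r$, with no smallness assumption on $s$. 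Your argument instead asserts a lower bound $\Vol\{x : s \subset \hB(x,r)\} \ge (1-\e)\Vol(B_r)$ (which does require $\mathrm{diam}(s) \ll r$) and concludes $\int \ge (1-\e)\N(\U,\M;\gamma)$—but this is the reverse of the theorem's left inequality $\int \le \N(\U,\M;\gamma)/(1-\e)$, not that inequality. Likewise the right inequality $\N(\U,\M;\gamma) \le (1+\e)\int_{\M}\N^*/\Vol(B_r)\,dx$ needs a \emph{lower} bound on $\Vol\{x : s \cap \hB(x,r) \ne \emptyset\}$, which again is cheap (this set contains $\hB(p,r)$ for any $p\in s$, hence has volume $\ge (1-\e'')\Vol(B_r)$), whereas you produce the complementary upper bound "measure at most $(1+\e)\Vol(B_r)$", which gives $\int \le (1+\e)\N(\U,\M;\gamma)$ rather than $\N(\U,\M;\gamma) \le (1+\e)\int$.

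Separately, Theorem \ref{thm:IGSmanif} concerns a fixed generic complex $\U$ and contains no $n$; your appeals to "diameter $O(n^{-1/d})$" and to choosing $r$ in the range "$n^{-1/d}\ll r<\eta$" do not belong in the proof of this deterministic statement (they are relevant to how the sandwich is later applied to the random $\U_n$, but not here). This is not a cosmetic issue: the correct proof succeeds precisely because the needed containments—admissible centers contained in $\hB(p,r)$ for the left inequality, containing $\hB(p,r)$ for the right—hold for components of arbitrary size, while the complementary estimates you wrote genuinely require small components and, even once proved, establish the opposite-sided pair of inequalities rather than the pair in the theorem.
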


\begin{proof}[Proofs of Theorems \ref{thm:IGS} and \ref{thm:IGSmanif}]
These results follow from the same proof
as in \cite{SarnakWigman}.
\end{proof}

\begin{remark}\label{rmk:IGS}
Similar statements hold true if we take the sum over \emph{all} components, ignoring their type (an observation used throughout the paper). 
More precisely, denoting by $\N(Y_1, Y_2)$ the number of components of $Y_1$ entirely contained in the interior of $Y_2$ and by $\N^*(Y_1, Y_2)$ the number of components of $Y_1$ that intersect $Y_2$, we have the following inequality:
\be 
\int_{B_{R-r}} \frac{\N(\Pc, B(x, r))}{\Vol \left( B_r \right)} dx
\leq \N(\Pc, B_R) \leq \int_{B_{R+r}} \frac{\N^*(\Pc, B(x, r))}{\Vol \left( B_r \right)} dx
\ee 
 and, in the Riemannian framework:
 
 \be 
(1-\e) \int_{\M} \frac{\N(\U, \hB(x,r))}{\Vol \left( B_r \right)} dx
\leq \N(\U,M) \leq (1+\e) \int_{\M} \frac{\N^*(\U, \hB(x,r))}{\Vol \left( B_r \right)} dx.
\ee 
 Since both $\Pc$ and $\U$ have only finitely many components, these inequalities follow by simply summing up the two inequalities from the previous theorems over all components type (the sums are over finitely many elements).
In fact the integral geometry sandwiches as proved in \cite{SarnakWigman} are adaptations of the original construction from \cite{NazarovSodin}, where components were counted without regard to topological type.
\end{remark}

\section{Limit law for the Euclidean case}\label{sec:Euclid}

In this section we prove 
Theorem \ref{thm:mainpoisson}.
The main step is provided by the following proposition.
We continue to use the above notation for the component counting function (see Definition \ref{def:ccf}).

\begin{prop}\label{prop:ergodic}
For every homotopy type $\gamma\in \G$ there exists a constant $c_\gamma$ such that the random variable \be c_{R, \gamma}=\frac{\N(\Pc, B(0,R);\gamma)}{\Vol(B(0,R))}\ee
converges to a constant $c_\gamma$ in $L^1$ as $R\to \infty$. The same is true for the random variable 
\be c_R=\frac{\N(\Pc, B(0,R))}{\Vol(B(0,R))},\ee (i.e. when we consider all components, with no restriction on their types): as $R\to \infty$, it converges to a constant $c$ in $L^1$.
\end{prop}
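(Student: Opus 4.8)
The plan is to apply the $d$-dimensional ergodic theorem (Theorem \ref{thm:ergodic}) to the translation action of $\R^d$ on the standard spatial Poisson process, combined with the integral geometry sandwich (Theorem \ref{thm:IGS} and Remark \ref{rmk:IGS}) to pass from the quantities $c_{R,\gamma}$ to genuine spatial averages of a fixed $L^1$ functional. The first step is to identify the right functional: for a fixed radius $r>0$, define $f(\omega) = \N(\Pc(\omega), B(0,r); \gamma)$, the number of connected components of the Poisson-generated set $\Pc$ that are entirely contained in the interior of $B(0,r)$ and have homotopy type $\gamma$. I would first check $f \in L^1(\rho)$: the number of components of $\Pc$ meeting $B(0,r)$ is dominated by the number of Poisson points in a slightly enlarged ball $B(0,r+\alpha)$ (each component contains at least one point), which has finite expectation, so $f \le \N^*(\Pc, B(0,r)) \le \#(P \cap B(0,r+\alpha)) \in L^1$. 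Then $T_x \omega$ = translation of the point configuration by $-x$ realizes $\N(\Pc, B(x,r);\gamma) = f(T_x\omega)$, since translating the configuration is the same as translating the observation window, and translation preserves homotopy type and the containment relation. Ergodicity of this action is quoted from \cite[Prop. 2.6]{MeesterRoy}.

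Next I would feed this into the ergodic theorem:
\[
\frac{1}{\Vol(B_R)} \int_{B_R} \N(\Pc, B(x,r);\gamma)\, dx \longrightarrow \EE\, \N(\Pc, B(0,r);\gamma) =: m_{r,\gamma} \qquad \text{a.s. and in } L^1
\]
as $R \to \infty$, for each fixed $r$. Now divide the integral geometry sandwich of Theorem \ref{thm:IGS} by $\Vol(B_R)$: the left-hand side reads $\frac{1}{\Vol(B_R)}\int_{B_{R-r}} \frac{\N(\Pc, B(x,r);\gamma)}{\Vol(B_r)}\,dx$ and the right-hand side $\frac{1}{\Vol(B_R)}\int_{B_{R+r}} \frac{\N^*(\Pc, B(x,r);\gamma)}{\Vol(B_r)}\,dx$, sandwiching $c_{R,\gamma}$. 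Since $\Vol(B_{R\pm r})/\Vol(B_R) \to 1$, both bounding averages converge (in $L^1$, and a.s.) as $R\to\infty$ to $m_{r,\gamma}/\Vol(B_r)$ and $m^*_{r,\gamma}/\Vol(B_r)$ respectively, where $m^*_{r,\gamma} = \EE\,\N^*(\Pc, B(0,r);\gamma)$. Hence $\limsup_R \|c_{R,\gamma}\|$-type oscillations are trapped between these two constants. To conclude that $c_{R,\gamma}$ actually converges, I would let $r\to\infty$: the difference $m^*_{r,\gamma} - m_{r,\gamma}$ counts (in expectation) only those components of type $\gamma$ that meet $B(0,r)$ but are not contained in it, i.e.\ components meeting the annulus near $\partial B(0,r)$; a boundary-layer estimate shows $m^*_{r,\gamma} - m_{r,\gamma} = O(r^{d-1})$ while $\Vol(B_r) \sim r^d$, so $(m^*_{r,\gamma}-m_{r,\gamma})/\Vol(B_r) \to 0$. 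Therefore the upper and lower sandwich limits coincide in the limit $r\to\infty$, forcing $c_{R,\gamma}$ to converge in $L^1$ to the common value $c_\gamma := \lim_{r\to\infty} m_{r,\gamma}/\Vol(B_r)$ (a monotone-type argument or a standard $\e/3$ argument combining the two limits $R\to\infty$ and $r\to\infty$ makes this rigorous). The statement for $c_R$ (all components) is identical, using the version of the sandwich in Remark \ref{rmk:IGS} and the functional $\N(\Pc, B(0,r))$, which is likewise in $L^1$.

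The main obstacle I anticipate is making the interchange of the two limits $R\to\infty$ and $r\to\infty$ fully rigorous: the ergodic theorem gives convergence in $r$-by-$r$ fashion, and one must argue that the limiting constant $c_\gamma$ obtained from the lower bounds equals the one from the upper bounds without circularity. The clean way is to observe that for every fixed $r$, $\liminf_R c_{R,\gamma}$ and $\limsup_R c_{R,\gamma}$ (which exist as a.s.\ limits of the sandwich bounds, hence are a.s.\ constants by ergodicity, or one argues directly in $L^1$) satisfy $m_{r,\gamma}/\Vol(B_r) \le \liminf_R c_{R,\gamma} \le \limsup_R c_{R,\gamma} \le m^*_{r,\gamma}/\Vol(B_r)$; taking $r\to\infty$ and using the boundary estimate collapses the interval to a point, which is then both the $\liminf$ and the $\limsup$, giving convergence, and $L^1$-convergence follows from the $L^1$-convergence in the ergodic theorem together with uniform integrability (again controlled by $\#(P\cap B(0,R))/\Vol(B_R)$, which converges in $L^1$). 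A secondary technical point is justifying the boundary-layer bound $m^*_{r,\gamma} - m_{r,\gamma} = O(r^{d-1})$: a component of $\Pc$ of type $\gamma$ meeting $\partial B(0,r)$ but not contained in $B(0,r)$ must contain a Poisson point within distance $\mathrm{diam}(\gamma\text{-realization})$ of the sphere, but components can a priori be large; one circumvents this by noting that such a component must contain a point of $P$ in the shell $\{r - C \le |x| \le r + \alpha\}$ for an appropriate random $C$, or more simply by bounding $m^*_{r,\gamma} - m_{r,\gamma} \le \EE\,\N^*(\Pc, B(0,r)) - \EE\,\N(\Pc, B(0,r-\delta))$ and controlling the latter via $\EE\,\#(P \cap (B(0,r+\alpha)\setminus B(0,r-\delta)))$, which is $O(r^{d-1})$ uniformly. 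I would present the argument for $c_{R,\gamma}$ in detail and remark that $c_R$ follows verbatim.
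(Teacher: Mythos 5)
Your proposal is correct and follows essentially the same route as the paper: fix $r$, apply the $d$-dimensional ergodic theorem to the $L^1$ functional $\N(\Pc,B(0,r);\gamma)$ translated through the Poisson configuration, sandwich $c_{R,\gamma}$ using Theorem \ref{thm:IGS}, and control the gap between the $\N$ and $\N^*$ averages by the expected number of Poisson points in an annulus of width $2\alpha$ about $\partial B(0,r)$, which after normalization is $O(r^{-1})$. One small remark on the boundary layer: you worry that components ``can a priori be large,'' but this is immaterial — any component of $\Pc$ that touches $\partial B(0,r)$ must contain a Poisson point within distance $\alpha$ of that sphere (since the component is a union of $\alpha$-balls centered at Poisson points), so the shell has fixed width $2\alpha$ and the annulus bound is immediate, exactly as the paper argues.
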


The next proposition is proved in Section \ref{sec:quant}.

\begin{prop}\label{prop:alleuclid}
The constants
$c_\gamma$ defined in
Proposition \ref{prop:ergodic}
are positive for all $\gamma \in \G$.
\end{prop}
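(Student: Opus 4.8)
The plan is to bound $c_\gamma$ from below by the expected number of components of homotopy type $\gamma$ visible in a \emph{fixed} ball, and then to produce such a component with positive probability by a direct construction on the Poisson process.

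\emph{Step 1: reduction to a local statement.} Since $c_{R,\gamma}\to c_\gamma$ in $L^1$ (Proposition~\ref{prop:ergodic}), we have $\EE[\N(\Pc,B(0,R);\gamma)]/\Vol(B_R)\to c_\gamma$. Apply the lower bound in the Integral Geometry Sandwich (Theorem~\ref{thm:IGS}), valid a.s.\ since the Poisson complex is generic: for $0<r<R$,
$$\int_{B_{R-r}}\frac{\N(\Pc,B(x,r);\gamma)}{\Vol(B_r)}\,dx\ \le\ \N(\Pc,B_R;\gamma).$$
Taking expectations, using Tonelli to swap integral and expectation, and using translation invariance of the Poisson process (so that $\EE[\N(\Pc,B(x,r);\gamma)]$ is independent of $x$), we get
$$\frac{\Vol(B_{R-r})}{\Vol(B_R)}\cdot\frac{\EE[\N(\Pc,B(0,r);\gamma)]}{\Vol(B_r)}\ \le\ \frac{\EE[\N(\Pc,B_R;\gamma)]}{\Vol(B_R)}.$$
Letting $R\to\infty$, the prefactor tends to $1$, so for every fixed $r>0$ we obtain $c_\gamma\ge \EE[\N(\Pc,B(0,r);\gamma)]/\Vol(B_r)$. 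Hence it suffices to exhibit one radius $r$ with $\PP\{\N(\Pc,B(0,r);\gamma)\ge 1\}>0$, i.e.\ with positive probability $B(0,r)$ contains in its interior a connected component of $\Pc$ of homotopy type $\gamma$.

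\emph{Step 2: construction of the component.} By Lemma~\ref{rmk:nondeg} there is a connected, nondegenerate $\R^d$-geometric complex of type $\gamma$; rescaling by a similarity (which preserves nondegeneracy and homotopy type) we may take it to be $\Pc_0=\bigcup_{k=1}^\ell B(z_k,\alpha)$ with the $z_k\in B(0,\rho)$ distinct. By nondegeneracy (transversality of the defining spheres is stable under small perturbations), there is $\e_0>0$ so small that the balls $B(z_k,\e_0)$ are pairwise disjoint and that whenever $\|\tilde z_k-z_k\|<\e_0$ for all $k$, the covers $\{B(\tilde z_k,\alpha)\}_k$ and $\{B(z_k,\alpha)\}_k$ have the same nerve; by the Nerve Lemma $\bigcup_k B(\tilde z_k,\alpha)\simeq\Pc_0$ and this union is connected (this is the Euclidean, simpler instance of the stability argument in the proof of Proposition~\ref{prop:stability}). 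Fix $r>\rho+\alpha+\e_0$ and set $N=\bigcup_{k=1}^\ell B(z_k,2\alpha+\e_0)$. Let $E$ be the event that the Poisson process restricted to $N$ consists of exactly one point in each $B(z_k,\e_0)$ and no other point in $N$. Since $B(z_1,\e_0),\dots,B(z_\ell,\e_0)$ and $N\setminus\bigcup_k B(z_k,\e_0)$ are pairwise disjoint, independence of the Poisson process over disjoint regions gives
$$\PP(E)=\Vol(B_{\e_0})^{\,\ell}\,e^{-\Vol(N)}=:a_0>0.$$
On $E$, writing $\tilde z_k$ for the point in $B(z_k,\e_0)$, the set $\Pc'=\bigcup_k B(\tilde z_k,\alpha)$ has homotopy type $\gamma$, is connected, and lies in $B(0,\rho+\alpha+\e_0)\subset B(0,r)$; moreover $\Pc'$ is a full connected component of $\Pc$, since any Poisson point $q\notin N$ satisfies $\|q-\tilde z_k\|\ge(2\alpha+\e_0)-\e_0=2\alpha$ for all $k$, so $B(q,\alpha)$ is disjoint from every $B(\tilde z_k,\alpha)$. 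Therefore $\N(\Pc,B(0,r);\gamma)\ge 1$ on $E$, so $\EE[\N(\Pc,B(0,r);\gamma)]\ge a_0$, and combining with Step~1,
$$c_\gamma\ \ge\ \frac{a_0}{\Vol(B_r)}\ >\ 0.$$

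\emph{Main obstacle.} The integral-geometry reduction and the Poisson independence computation are routine; the only point requiring care is Step~2, namely realizing an arbitrary $\gamma\in\G$ by a nondegenerate configuration of balls of the \emph{fixed} radius $\alpha$ (handled by Lemma~\ref{rmk:nondeg} together with rescaling) and the stability/insulation bookkeeping ensuring that the seeded cluster is genuinely an isolated component of type $\gamma$ sitting inside $B(0,r)$. This is where one must invoke transversality and the Nerve Lemma, but it is not difficult. (Alternatively, one can bypass Theorem~\ref{thm:IGS} by tiling $B(0,R)$ with $\gtrsim R^d$ disjoint translates of $N$ and summing the independent translated copies of the event $E$; this gives the same conclusion.)
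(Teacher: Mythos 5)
Your proof is correct and follows essentially the same strategy as the paper: the heart of both arguments is the identical local seeding construction (the paper's Lemma \ref{lemma:all}), which realizes a nondegenerate representative of $\gamma$ by balls of radius $\alpha$, places one Poisson point near each center, and insulates the cluster by forbidding points in a surrounding region. The only difference is the local-to-global step, where the paper packs on the order of $\Vol(B_R)$ disjoint copies of the seeded ball into $B(0,R)$ and uses superadditivity of the count plus linearity of expectation, whereas you use the lower half of the Integral Geometry Sandwich together with stationarity --- a variant you yourself note is interchangeable with the packing argument.
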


\begin{proof}[Proof of Proposition \ref{prop:ergodic}] 
The proof follows the argument from \cite[Theorem 3.3]{SarnakWigman}, with some needed modifications.

We will use the shortened notation $\N_R=\N(\Pc, B(0,R);\gamma)$, $\N(x, r)=\N(\Pc, B(x, r);\gamma)$ and $\N^*(x, r)=\N^*(\Pc, B(x, r);\gamma)$ ($\gamma$ will be fixed for the rest of the proof and we omit dependence on it in the notation).
Using Theorem \ref{thm:IGS} we can write, for $0<\alpha<r<R$:
\begin{equation} \label{eq:starting}
\left(1-\frac{r}{R}\right)^d\frac{1}{\Vol(B_{R-r})}\int_{B_{R-r}} \frac{\N(x,r)}{\Vol \left( B_r \right)} dx
\leq \frac{\N_R}{\Vol(B_R)} \leq \left(1+\frac{r}{R}\right)^d\frac{1}{\Vol(B_{R+r})}\int_{B_{R+r}} \frac{\N^*(x, r)}{\Vol \left( B_r \right)} dx.
\end{equation} 

Denoting by $A(x, r, \alpha)$ the annulus $\{z\in \R^d: r-\alpha\leq\|x-z\|\leq r+\alpha \}$, we can estimate the integral on the r.h.s. of \eqref{eq:starting} with:
\be\label{eq:N*est}
\int_{B_{R+r}} \frac{\N^*(x, r)}{\Vol \left( B_r \right)} dx\leq \int_{B_{R+r}} \frac{\N(x, r)}{\Vol \left( B_r \right)} +\frac{\#P\cap A(x, r, \alpha) }{\Vol \left( B_r \right)}dx .\ee
In fact, if a component of $\Pc$ is not entirely contained in the interior of $B(x, r)$, then it touches the boundary of $B(x, r)$ and hence this component must contain a point $p\in P \cap A(x, r, \alpha)$. 

We now take $R \rightarrow \infty$
(with $r$ fixed)
and use the Ergodic Theorem 
in order to assert that
\be\label{eq:ergodicappl}
\frac{1}{\Vol(B_{R-r})}\int_{B_{R-r}} \frac{\N(x,r)}{\Vol \left( B_r \right)} dx \rightarrow 
\lambda(r) \quad \text{a.s. and in } L^1,
\ee
as $R \rightarrow \infty$
where $
\lambda(r) := \EE \frac{\N(x,r)}{\Vol \left( B_r \right)}$ is a constant.

In order to apply the ergodic theorem (Theorem \ref{thm:ergodic} stated above) we introduce the function $f$ defined as
$$f(P) = \frac{\N(0,r)}{\Vol{(B(0,r)})},$$
where $r$ is fixed,
and the dependence of $f$ on the Poisson process $P$
is through $\Pc$ which we recall is 
the $\alpha$-neighborhood of $P$.
We also let $T_x$ denote translation by $x$ 
acting on the Poisson process $P$.  This action is ergodic as noted above in Section \ref{sec:prelim}.
We also have $f \in L^1$, since
$$\EE \left[ \frac{\N(\Pc,B(0,r),\gamma)}{\Vol(B(0,r))} \right] \leq  \frac{\EE \# P \cap B(0,r)}{\Vol(B(0,r))} = 1 < \infty,$$ so that the ergodic theorem may be applied.

Furthermore, we notice that
$$f(T_x(P)) = \frac{\N(x,r)}{\Vol{(B(0,r)})},$$
i.e., shifting $P$
has the same effect as recentering the ball $B(0,r)$
to $B(x,r)$.
Thus, the result of applying the ergodic theorem to this choice of $f$ gives precisely the convergence statement in \eqref{eq:ergodicappl}.

Note that the same convergence statement in \eqref{eq:ergodicappl} holds for $\frac{1}{\Vol(B_{R+r})} \int_{B_{R+r}} \frac{\N(x, r)}{\Vol \left( B_r \right)} dx$, namely,
\be\label{eq:ergodicappl2}
\frac{1}{\Vol(B_{R+r})}\int_{B_{R+r}} \frac{\N(x,r)}{\Vol \left( B_r \right)} dx \rightarrow 
\lambda(r) \quad \text{a.s. and in } L^1,
\ee
as $R \rightarrow \infty$
where $\lambda(r)$ is the same constant as in \eqref{eq:ergodicappl}.

We also have
\be\label{eq:LLN}
\frac{1}{\Vol(B_{R+r})}\int_{B_{R+r}} \frac{\#P\cap A(x, r, \alpha) }{\Vol \left( B_r \right)}dx \rightarrow a(r) \quad \text{a.s. and in } L^1
\ee
as $R \rightarrow \infty$
where
$a(r) := \EE \frac{\#P\cap A(x, r, \alpha) }{\Vol \left( B_r \right)} =  \frac{\Vol(A(x, r, \alpha)) }{\Vol \left( B_r \right)}$.
This follows from the ergodic theorem as well
(although it seems more natural to view it as a consequence of the law of large numbers).

Let $\e>0$ be arbitrary.
Since $a(r)=O(r^{-1})$
we can choose $r$ sufficiently large
that $a(r) < \e$.  We then choose 
$R>>r>>0$ sufficiently large so that
$(1 + r/R)^d < 1+\e$, $(1-r/R)^d > 1-\e$.
Using the above convergence statements \eqref{eq:ergodicappl},
\eqref{eq:ergodicappl2},
and \eqref{eq:LLN}
(while making $R$ even larger if necessary)
we have,
\be\label{eq:inexp}
(1-\e) (\lambda(r)-\e) \leq \frac{\N_R}{\Vol(B_R)} \leq (1+\e)(\lambda(r) + a(r) + \e) \quad \text{in expectation.}
\ee
Using $a(r)< \e$ and also that $\lambda(r) \leq 1$ for all $r$,
which follows from
$$\EE \N(x,r) \leq \EE \# P \cap B(0,r) = \Vol(B(0,r)),$$
\eqref{eq:inexp} implies
$$ \EE \left| \frac{\N_R}{\Vol(B_R)} - \lambda(r) \right| < \e(1+\lambda(r)) + (1+\e)a(r) < 2\e+(1+\e)\e. $$
Since $\e>0$ was arbitrary, 
this implies the existence of a constant $c_\gamma$
such that $\lambda(r) \rightarrow c_\gamma$
in $L^1$ as $r \rightarrow \infty$
and $\N_R/\Vol(B_R) \rightarrow c_\gamma$
in $L^1$ as $R \rightarrow \infty$.

The proof of the second
statement in the proposition
concerning the existence 
of a limit $c_R \rightarrow c$
for all components (with no restriction on homotopy type)
follows from the same argument
while replacing the integral geometry sandwich with its more basic version (see Remark \ref{rmk:IGS}
in Section \ref{sec:prelim}).
\end{proof}

\subsection{Proof of Theorem \ref{thm:mainpoisson}}\label{sec:pfpoisson}
We write the measure $\mu_R$ as:
\begin{align}\mu_R&=\frac{1}{\N(\Pc, B(0, R))}\sum_{\gamma\in \G}\N(\Pc, B(0, R);\gamma)\delta_\gamma\\
&=\frac{\mathrm{Vol}(B(0,R))}{\N(\Pc, B(0, R))}\sum_{\gamma\in \G}\frac{\N(\Pc, B(0, R);\gamma)}{\mathrm{Vol}(B(0, R))}\delta_\gamma\\
&=\sum_{\gamma\in \G}\frac{c_{R, \gamma}}{c_R}\delta_\gamma.
\end{align}

By the convergence statements in Proposition \ref{prop:ergodic},
and since $c>0$
(which follows from Proposition \ref{prop:alleuclid} since $c \geq c_\gamma$),
we have $a_{R,\gamma}=\frac{c_{R, \gamma}}{c_R}$ converges in $L^1$ to a constant $a_\gamma = \frac{c_\gamma}{c}$ as $R \rightarrow \infty$.

The positivity of the coefficients $a_\gamma$ follows from the positivity of $c_\gamma$ stated in Proposition \ref{prop:alleuclid}.

Next we prove that the measure
$$\mu = \sum_{\gamma \in \G} a_\gamma \delta_\gamma $$
is indeed a probability measure (see Proposition \ref{prop:probmeas} below).
The main obstacle, addressed in the following lemma (cf. \cite[Sec. 4]{SarnakWigman}), 
is in preventing 
the mass in the sequence of measures
$\mu_R$ from escaping to infinity.

\begin{lemma}[Topology does not leak to infinity]\label{lemma:notleak}
For every $\delta> 0$
there exists a finite set $g \subset \G$
and $R_0>0$ such that for all $R \geq R_0$
$$ \EE \sum_{\gamma \in g^c} c_{R,\gamma} < \frac{\delta}{4} .$$
\end{lemma}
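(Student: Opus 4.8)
The plan is to bound the expected number of components whose homotopy type lies outside a large finite set $g\subset\G$ by a tail estimate for the ``complexity'' of a component. First I would observe that $\EE\sum_{\gamma\in g^c}c_{R,\gamma}$ equals $\frac{1}{\Vol(B_R)}\EE\,\N(\Pc,B(0,R);\G\setminus g)$, the expected number of components of $\Pc$ entirely inside $B(0,R)$ whose homotopy type is not in $g$. The key idea is that a component with a ``large'' homotopy type must contain many points of $P$: more precisely, a connected component of $\Pc=\bigcup_{p\in P}B(p,\alpha)$ built on $m$ points can realize only finitely many homotopy types (there are only finitely many combinatorial types of such complexes on $m$ points up to homotopy), so if we enumerate $\G=\{\gamma_1,\gamma_2,\dots\}$ there is a function $N(\gamma)$ with the property that any component of homotopy type $\gamma$ contains at least $N(\gamma)$ points of $P$, and $N(\gamma)\to\infty$ as we go out the enumeration. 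Hence, given $\delta$, it suffices to take $g$ to be the (finite) set of $\gamma$ with $N(\gamma)\le m_0$ for $m_0$ large, and then bound the expected number of components containing at least $m_0$ points of $P$.

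The second step is the tail bound itself. I would use the integral geometry sandwich (Remark \ref{rmk:IGS}), or more simply a direct covering/Palm-calculus argument, to estimate $\frac{1}{\Vol(B_R)}\EE\,\#\{\text{components of }\Pc\text{ in }B(0,R)\text{ with }\ge m_0\text{ points}\}$. One clean way: such a component, together with the requirement of being connected and containing $m_0$ points within a region of bounded diameter once you condition suitably, is controlled by the probability that a ball of fixed radius (comparable to $\alpha$) around a ``first'' Poisson point contains many further points — but connectivity of a large geometric component forces a chain, and the expected number of chains of length $m_0$ emanating from a given point decays (super)exponentially in $m_0$ by the standard subcritical-type branching estimate for the Poisson Boolean model, or, more crudely, one can bound the number of components with $\ge m_0$ points by $\frac1{m_0}$ times the total number of points in ``large'' clusters, which has density bounded by a constant (namely $1$, the intensity) — this already gives $\EE\sum_{\gamma\in g^c}c_{R,\gamma}\le 1/m_0$ uniformly in $R$, which is all we need. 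Choosing $m_0>4/\delta$ then finishes the argument, with $R_0$ any positive number (uniformity in $R$ being automatic from the density bound).

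The main obstacle I anticipate is making the passage ``$\gamma\notin g$ implies the component has $\ge m_0$ points of $P$'' precise and uniform: one must know that for each fixed $m$ there are only finitely many homotopy types of connected $\R^d$-geometric complexes on at most $m$ balls of radius $\alpha$. This follows because such a complex is determined up to homeomorphism (hence homotopy) by the combinatorial type of the arrangement of the $m$ balls, and there are finitely many such combinatorial types for fixed $m$ (e.g.\ via the finiteness of sign conditions of the defining polynomials, as in the proof of Lemma \ref{rmk:nondeg}). Granting that, the enumeration $\G=\{\gamma_k\}$ can be arranged so that $\gamma_k$ requires at least $N(k)$ points with $N(k)\to\infty$, and everything else is the density bound above.

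\begin{proof}[Proof of Lemma \ref{lemma:notleak}]
For $m\in\NN$ let $\G_{\le m}\subset\G$ denote the set of homotopy types that can be realized by a connected $\R^d$-geometric complex $\bigcup_{j=1}^{m'}B(y_j,\alpha)$ with $m'\le m$ balls. As in the proof of Lemma \ref{rmk:nondeg}, for fixed $m'$ the homotopy type of such a complex depends only on the combinatorial type of the arrangement of the balls $\{B(y_j,\alpha)\}_{j=1}^{m'}$, i.e.\ on the sign conditions realized by the finitely many polynomials $\|y_i-y_j\|^2-(2\alpha)^2$ and related incidence polynomials; there are finitely many such sign conditions, hence $\G_{\le m}$ is a finite set for every $m$. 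Moreover any connected component of $\Pc=\bigcup_{p\in P}B(p,\alpha)$ that contains exactly $m'$ points of $P$ has a homotopy type lying in $\G_{\le m'}$. Consequently, if a component of $\Pc$ has homotopy type $\gamma\notin\G_{\le m}$, then it contains at least $m+1$ points of $P$.

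Fix $\delta>0$ and choose $m\in\NN$ with $m>8/\delta$, and set $g:=\G_{\le m}$, a finite subset of $\G$. For any $R>0$ we have
\be
\EE\sum_{\gamma\in g^c}c_{R,\gamma}
=\frac{1}{\Vol(B(0,R))}\,\EE\,\N\bigl(\Pc,B(0,R);\,\G\setminus g\bigr),
\ee
where $\N(\Pc,B(0,R);\,\G\setminus g)$ denotes the number of connected components of $\Pc$ entirely contained in the interior of $B(0,R)$ whose homotopy type is not in $g$. By the previous paragraph, each such component contains at least $m+1$ points of $P$, and these points lie in $B(0,R)$; since distinct components are disjoint, the number of such components is at most $\frac{1}{m+1}\,\#\bigl(P\cap B(0,R)\bigr)$. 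Therefore
\be
\EE\,\N\bigl(\Pc,B(0,R);\,\G\setminus g\bigr)
\le\frac{1}{m+1}\,\EE\,\#\bigl(P\cap B(0,R)\bigr)
=\frac{\Vol(B(0,R))}{m+1},
\ee
using that $P$ is the standard Poisson process of intensity $1$. Dividing by $\Vol(B(0,R))$ gives
\be
\EE\sum_{\gamma\in g^c}c_{R,\gamma}\le\frac{1}{m+1}<\frac{\delta}{8}<\frac{\delta}{4}
\ee
for every $R>0$. Taking $R_0=1$ (any positive value works, the bound being uniform in $R$) completes the proof.
\end{proof}
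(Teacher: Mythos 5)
Your proof is correct, and it takes a genuinely different route from the paper's. The paper's argument leans on the machinery already set up: it applies the Integral Geometry Sandwich (Theorem \ref{thm:IGS}) to transfer the estimate on $\EE\, c_{R,\gamma}$ to the expectation of the local count $\N(\Pc,B(x,r);\gamma)/\Vol(B_r)$ for a fixed small radius $r$, observes that $\sum_\gamma \EE\, \N(\Pc,B(0,r);\gamma)/\Vol(B_r)$ is bounded uniformly in $r$ by the Poisson intensity, and then chooses $r$ large (to kill the $O(r^{-1})$ boundary term), then $g$ cofinite, then $R_0$ large (to control the $(1+r/R)^d$ factor). Your argument avoids the sandwich entirely: since every ball $B(y_j,\alpha)$ is convex, the Nerve Lemma says the homotopy type of a union of $m'$ such balls is that of a simplicial complex on at most $m'$ vertices, of which there are finitely many, so $\G_{\le m}$ is finite for each $m$; any component whose type falls outside $\G_{\le m}$ must carry at least $m+1$ Poisson points, and a straightforward pigeonhole against the expected point count per unit volume gives $\EE\sum_{\gamma\in g^c}c_{R,\gamma}\le 1/(m+1)$, uniformly in $R$. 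This is shorter and more elementary, it gives a bound that holds for all $R>0$ rather than only $R\ge R_0$, and it isolates the real reason the tail is small (complicated types are expensive in points). The paper's version, by contrast, is structurally parallel to the Sarnak--Wigman scheme it follows and reuses the sandwich that is already needed elsewhere. One small remark on your justification of the finiteness of $\G_{\le m}$: rather than appealing to sign conditions of the polynomials $\|y_i-y_j\|^2-(2\alpha)^2$ (which only govern pairwise intersections, whereas the homotopy type depends on all higher-order intersections through the nerve), it is cleaner to invoke the Nerve Lemma directly, as above; the nerve is a simplicial complex on at most $m'$ vertices, and there are finitely many of those.
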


\begin{proof}[Proof of Lemma \ref{lemma:notleak}]
First we observe that
\begin{equation}\label{eq:cvgcsmallr}
\sum_{\gamma \in \G} \frac{\EE N(\Pc,B(0,r);\gamma)}{\Vol\left(B(0,r) \right)} < a_0 < \infty,
\end{equation}
where $a_0$ is independent of $r$.
Indeed,
$$ \sum_{\gamma \in \G} \frac{\EE N(\Pc,B(0,r);\gamma)}{\Vol\left(B(0,r) \right)} = \frac{\EE N(\Pc,B(0,r))}{\Vol\left(B(0,r) \right)}
\leq \frac{\EE |\{ P \cap B(0,r) \}|}{\Vol\left(B(0,r) \right)},$$
which is a constant independent of $r$
(the average number of points of a Poisson process in a given region is proportional to the volume of the region).

Let $A \subset \G$ be arbitrary.
Then, using the Integral Geometry Sandwich, we obtain:
\begin{align}
	\int_{\Omega} \sum_{\gamma \in A} c_{R,\gamma}(\omega) d\omega &\leq
	\left( 1+ \frac{r}{R} \right)^d \frac{1}{\Vol(B_{R+r})} \EE \left\{ \sum_{\gamma \in A} \int_{B_{R+r}} \frac{N^*(\Pc,B(x,r);\gamma)}{\Vol(B_r)} dx \right\} \\
	&\leq
	\left( 1+ \frac{r}{R} \right)^d \left( \sum_{\gamma \in A} \EE \frac{N(\Pc,B(x,r);\gamma)}{\Vol(B_r)} + O(r^{-1}) \right).
\end{align}

Let $\delta > 0$ be arbitrary,
and choose $r$ sufficiently large
that the above $O(r^{-1})$
error term is smaller than
$\delta/16$.

By the convergence \eqref{eq:cvgcsmallr}
there exists a finite
set $g \subset \G$
such that
\begin{equation}\label{eq:cofinite}
\sum_{\gamma \in g^c} \frac{\EE N(\Pc,B(0,r);\gamma)}{\Vol\left(B(0,r) \right)} < \frac{\delta}{16}.
\end{equation}

Choosing $R_0$ large enough
that $\displaystyle \left( 1+ \frac{r}{R} \right)^d < 2$
we then have for all $R \geq R_0$
$$ \EE \sum_{\gamma \in g^c} c_{R,\gamma} < 2\left( \frac{\delta}{16} + \frac{\delta}{16} \right) = \frac{\delta}{4},$$
as desired, and this completes the proof of the lemma.
\end{proof}

\begin{prop}\label{prop:probmeas}
The measure 
$$\mu= \sum_{\gamma \in \G} a_\gamma \delta_{\gamma},$$
with $a_\gamma = \frac{c_\gamma}{c}$,
is a probability measure.
\end{prop}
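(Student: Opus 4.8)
The plan is to show that $\mu = \sum_{\gamma\in\G} a_\gamma \delta_\gamma$ has total mass $1$; since each $a_\gamma = c_\gamma/c \geq 0$ (positivity being already established), it suffices to prove $\sum_{\gamma\in\G} a_\gamma = 1$, equivalently $\sum_{\gamma\in\G} c_\gamma = c$. The inequality $\sum_{\gamma\in\G} c_\gamma \leq c$ is immediate: for every finite $g\subset\G$ we have $\sum_{\gamma\in g} c_{R,\gamma} \leq c_R$ pointwise (the left side counts a subcollection of the components counted on the right), so passing to the $L^1$-limit as $R\to\infty$ via Proposition~\ref{prop:ergodic} gives $\sum_{\gamma\in g} c_\gamma \leq c$, and taking the supremum over finite $g$ yields $\sum_{\gamma\in\G} c_\gamma \leq c$. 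In particular the series $\sum_\gamma c_\gamma$ converges.

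For the reverse inequality, the point is that no mass leaks to infinity, and this is exactly what Lemma~\ref{lemma:notleak} provides. Fix $\delta>0$ and let $g\subset\G$ be the finite set and $R_0$ the threshold from that lemma, so that $\EE\sum_{\gamma\in g^c} c_{R,\gamma} < \delta/4$ for all $R\geq R_0$. Then for $R\geq R_0$,
\[
\EE\, c_R = \EE\sum_{\gamma\in g} c_{R,\gamma} + \EE\sum_{\gamma\in g^c} c_{R,\gamma} < \EE\sum_{\gamma\in g} c_{R,\gamma} + \frac{\delta}{4}.
\]
Now let $R\to\infty$: the left side tends to $c$ and each term $\EE\, c_{R,\gamma}\to c_\gamma$ (finite sum over $g$), so $c \leq \sum_{\gamma\in g} c_\gamma + \delta/4 \leq \sum_{\gamma\in\G} c_\gamma + \delta/4 \leq \sum_{\gamma\in\G}c_\gamma + \delta$. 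Since $\delta>0$ was arbitrary, $c\leq \sum_{\gamma\in\G} c_\gamma$. Combining the two inequalities gives $\sum_{\gamma\in\G} c_\gamma = c$, hence $\sum_{\gamma\in\G} a_\gamma = 1$.

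Finally one checks the remaining axioms of a probability measure are trivially satisfied: $\mu$ is a countable sum of point masses with nonnegative weights summing to $1$, so it is automatically countably additive and assigns total mass $1$ to $\G$. The only substantive input is Lemma~\ref{lemma:notleak}, which has already been proved; the main conceptual obstacle — ruling out escape of mass to infinity — is thus handled upstream, and the remaining argument is the short limiting computation above.
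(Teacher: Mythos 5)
Your proof is correct and relies on the same essential input (Lemma \ref{lemma:notleak}), but is organized differently from the paper's argument in a way that avoids Fatou's lemma. The paper bounds $\left|\sum_{\gamma\in\G} c_\gamma - c\right|$ directly through a four-term telescoping estimate, which requires controlling the tail $\sum_{\gamma\in g^c} c_\gamma$ of the \emph{limiting} constants; this is done there by applying Fatou's lemma (twice) and Tonelli's theorem to transfer the bound of Lemma \ref{lemma:notleak}, which is stated for finite $R$, to the limit. You instead split $\sum_\gamma c_\gamma = c$ into two one-sided inequalities. The upper bound $\sum_\gamma c_\gamma \le c$ is elementary: for any finite $g\subset\G$ the pointwise inequality $\sum_{\gamma\in g} c_{R,\gamma}\le c_R$ passes to the limit of expectations, and one takes the supremum over finite $g$. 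The lower bound uses the exact identity $c_R=\sum_{\gamma\in\G} c_{R,\gamma}$ to split $\EE\, c_R$ into the $g$-part and the $g^c$-part, bounds the latter by Lemma \ref{lemma:notleak} \emph{at finite $R$}, and only then sends $R\to\infty$ over the finite sum; the tail is thus disposed of before any limit is taken, making Fatou unnecessary. The two approaches have the same logical content and comparable length, but yours keeps every estimate at the level of expectations of finite-$R$ quantities, which is arguably more transparent.
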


\begin{proof}[Proof of Proposition \ref{prop:probmeas}]
We need to show that 
$\sum_{\gamma \in \G} a_\gamma = 1$,
or equivalently,
\be\label{eq:goalcgamma}
\sum_{\gamma \in \G} c_\gamma = c.
\ee

Let $\delta>0$ and take $g \subset \G$ to be the set guaranteed by Lemma \ref{lemma:notleak}.

We want to show that
\begin{equation}\label{eq:cgammagoal}
\left|\sum_{\gamma \in \G} c_\gamma - c \right| \leq \delta,
\end{equation}
which will then immediately establish 
\eqref{eq:goalcgamma} since $\delta>0$ is arbitrary.

For fixed $\omega$ in the sample space $\Omega$
observe that by Fatou's lemma 
$$\sum_{\gamma \in g^c} c_\gamma \leq \liminf_{R \rightarrow \infty} \sum_{\gamma \in g^c} c_{R,\gamma}(\omega),$$
and applying Fatou's lemma again followed by Tonelli's theorem,
we have
\begin{align}
\int_{\Omega} \sum_{\gamma \in g^c} c_\gamma d\omega &\leq \int_{\Omega} \liminf_{R \rightarrow \infty} \sum_{\gamma \in g^c} c_{R,\gamma}(\omega) d\omega \\
&\leq \liminf_{R \rightarrow \infty} \int_{\Omega}  \sum_{\gamma \in g^c} c_{R,\gamma}  d\omega \\
\quad &= \liminf_{R \rightarrow \infty} \sum_{\gamma \in g^c} \int_{\Omega} c_{R,\gamma}(\omega) d\omega.
\end{align}

Combining this with Lemma \ref{lemma:notleak} 
we obtain
\begin{equation}\label{eq:cgammatail}
\int_{\Omega} \sum_{\gamma \in g^c} c_\gamma d\omega \leq \frac{\delta}{4}.
\end{equation}

We proceed to estimate
$\displaystyle \left|\sum_{\gamma \in \G} c_\gamma - c \right|$:

\begin{align}
 \left|\sum_{\gamma \in \G} c_\gamma - c \right| &= \int_{\Omega}  \left|\sum_{\gamma \in \G} c_\gamma - c \right|  d\omega \\
&= \int_{\Omega}  \left|\sum_{\gamma \in \G} c_\gamma - c_{R} + c_{R} - c \right|  d\omega \\
&= \int_{\Omega}  \left|\sum_{\gamma \in \G} c_\gamma - \sum_{\gamma \in \G} c_{R,\gamma} + c_{R} - c \right|  d\omega \\
&\leq  \int_{\Omega} \left|\sum_{\gamma \in \G} c_\gamma - \sum_{\gamma \in \G} c_{R,\gamma} \right| d\omega + \int_{\Omega} \left|c_{R} - c  \right| d\omega\\
&\leq  \int_{\Omega} \left| \sum_{\gamma \in g} c_\gamma - \sum_{\gamma \in g} c_{R,\gamma} \right| d \omega + \int_{\Omega} \left| \sum_{\gamma \in g^c} c_\gamma - \sum_{\gamma \in g^c} c_{R,\gamma} \right| d\omega + \int_{\Omega} \left|c_{R} - c  \right| d\omega\\
&\leq  \int_{\Omega} \sum_{\gamma \in g} \left| c_\gamma - c_{R,\gamma} \right| d \omega + \int_{\Omega} \sum_{\gamma \in g^c} c_\gamma d \omega + \int_{\Omega} \sum_{\gamma \in g^c} c_{R,\gamma} d\omega + \int_{\Omega} \left|c_{R} - c  \right| d\omega\\
&\leq \frac{\delta}{4} + \frac{\delta}{4} + \frac{\delta}{4} + \frac{\delta}{4}.
\end{align}

In the last line, we have estimated the first and last terms by $\delta/4$ by choosing $R$ sufficiently large
and using the $L^1$ convergence $c_{R,\gamma} \rightarrow c_\gamma$ (and the fact that $g$ is a finite set) and the $L^1$ convergence
$c_R \rightarrow c$;
we have estimated the second term by $\delta / 4$ using \eqref{eq:cgammatail};
and we have estimated the third term by $\delta/4$ by choosing $R$ sufficiently large to apply Lemma \ref{lemma:notleak}.
This establishes \eqref{eq:cgammagoal} and concludes the proof of the proposition.
\end{proof}

Having established by Proposition \ref{prop:probmeas} that $\mu$ is a probability measure, the convergence in probability $\mu_R \rightarrow \mu$ now follows from the coefficient-wise convergence $a_{R,\gamma} \rightarrow a_\gamma$ along with the purely measure-theoretic result Lemma \ref{lemma:cvg}.

The statement that the support of $\mu$ is all of $\G$ follows from the positivity of the coefficients $a_\gamma$.
This concludes the proof of Theorem \ref{thm:mainpoisson}.

\section{Semi-local counts in the Riemannian case}\label{sec:semilocal}
In this section,
we study the components of $\U_n$ contained in  a neighborhood $\B(p, R n^{-1/d})$ of a point $p\in \M$ by relating this case to the Euclidean case.
As a preliminary step, we consider the following diffeomorphism (see also Proposition \ref{prop:stability}):
\be\label{eq:psi} \psi_n:\B(p, Rn^{-1/d})\xrightarrow{\mathrm{exp}_{p}^{-1}}B_{T_{p}M}(0, Rn^{-1/d})\xrightarrow{n^{1/d}}B_{T_{p}M}(0, R)\simeq B(0, R).\ee 
Through $\psi_n$, the stochastic point process $U_n\cap \B(p, Rn^{-1/d})$ induces a stochastic point process on $B(0, R)$ which converges in distribution to the uniform Poisson process on $B(0, R)$ \cite[Sec. 3.5]{Resnick}. By Skorokhod's representation theorem \cite[Ch. 1, Sec. 6]{Billingsley}, there exists a representation, or ``coupling'', of these point processes defined on a common probability space such that the convergence of these stochastic processes is almost sure.

\begin{thm}\label{thm:coupling}
Let $p\in \M$. For every $\delta>0$ and for $R>0$ sufficiently large there exists $n_0$ such that 
(using the coupling given by Skorokhod's theorem mentioned above) for every $\gamma \in \G$ and for $n\geq n_0$:
\be\label{eqdouble} \PP\left\{\N(\Pc, B(0,R); \gamma)=\N(\U_n, \B(p, R n^{-1/d}); \gamma)\right\}\geq 1-\delta.\ee
\end{thm}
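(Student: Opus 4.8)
The plan is to transfer the Euclidean count to the manifold through the rescaling diffeomorphism $\psi_n$ from \eqref{eq:psi}, using the Skorokhod coupling to control the point process and Proposition \ref{prop:stability} to control the resulting change in homotopy type. First I would fix $p\in M$ and work on the common probability space provided by Skorokhod's theorem, on which the rescaled point process $\psi_n(U_n\cap\hat B(p,Rn^{-1/d}))$ converges almost surely to the uniform Poisson process $P$ on $B(0,R)$; write $P_n := \psi_n(U_n\cap \hat B(p,Rn^{-1/d}))$ so that $P_n\to P$ a.s.\ in the appropriate point-process topology (meaning: eventually the same number of points, with the $k$-th point of $P_n$ converging to the $k$-th point of $P$). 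The idea is that almost surely the configuration $P$ is nondegenerate and has no component touching $\partial B(0,R)$ — the latter because the probability that a Poisson point lands in a thin annulus $A(0,R,\rho)$ tends to $0$ as $\rho\to 0$, so on a set of probability $\geq 1-\delta$ (after possibly enlarging $R$, or rather shrinking a boundary annulus) every component of $\mathcal P$ that meets $B(0,R)$ is in fact compactly contained in $B(0,R-\rho)$ for some margin $\rho>0$, hence $\N(\mathcal P,B(0,R);\gamma)=\N^*(\mathcal P,B(0,R);\gamma)$. This "no topology on the boundary" step, made quantitative, is what produces the $1-\delta$ in \eqref{eqdouble}.

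Next, on that good event, the finitely many Poisson points in $\overline{B(0,R)}$ form a finite list $y_1,\dots,y_\ell$ with $\mathcal P\cap B(0,R) = \bigcup_k B(y_k,\alpha)$ a nondegenerate complex compactly contained in some $B(0,R')$ with $R'<R$. Applying Proposition \ref{prop:stability} (with this $\Pc$, with $r=\alpha$ so $R = \alpha R'/r = R'$ up to the harmless identification of scales, and with the maps $\psi_n,\varphi_n$ exactly those of \eqref{eq:psi}) gives $\e_0>0$ and $n_1$ so that once the $k$-th point $y_{k,n}$ of $P_n$ satisfies $\|y_{k,n}-y_k\|<\e_0$ for all $k$, then for $n\geq n_1$ we have
\be
\bigcup_{k=1}^{\ell}\hat B(\varphi_n(y_{k,n}),\alpha n^{-1/d}) \simeq \bigcup_{k=1}^{\ell}B(y_k,\alpha),
\ee
i.e.\ the combinatorics (hence the Čech complex, hence the homotopy type of each component) of $U_n\cap\hat B(p,Rn^{-1/d})$ and of $\mathcal P\cap B(0,R)$ agree. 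Since $P_n\to P$ a.s., the event $\{\|y_{k,n}-y_k\|<\e_0 \ \forall k\}$ holds for all $n$ large, almost surely; combined with the good event above (probability $\geq 1-\delta$) and a further application of Proposition \ref{prop:stability} / Lemma \ref{lemma:convex} to ensure no spurious component of $U_n\cap\hat B(p,Rn^{-1/d})$ is created near the boundary, we get that for all $n\geq n_0$, with probability $\geq 1-\delta$, $\N(\Pc,B(0,R);\gamma) = \N(\U_n,\hat B(p,Rn^{-1/d});\gamma)$ simultaneously for every $\gamma\in\G$ (note the count is automatically supported on finitely many $\gamma$, namely those realized by subcollections of the $\ell$ points, so "for every $\gamma$" is not an infinitary issue).

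The main obstacle is the boundary effect: $\psi_n$ maps $\hat B(p,Rn^{-1/d})$ to $B(0,R)$, but a component of $\U_n$ can straddle $\partial\hat B(p,Rn^{-1/d})$, and such a component is counted in neither $\N$, so I must ensure that on the good event no component of either complex sits within distance $\sim\alpha n^{-1/d}$ of the respective boundary sphere. This is handled by choosing the boundary annulus thin enough that the Poisson process a.s.\ avoids it with probability $\geq 1-\delta$ and then invoking the uniform convergence $d_n\to d_{\R^d}$ (as in the proof of Proposition \ref{prop:stability}) to transfer this "buffer zone" from $B(0,R)$ back to $\hat B(p,Rn^{-1/d})$; one should be slightly careful that "$R$ sufficiently large" is used only to make the relative width of the annulus, and hence the probability of a point landing in it, small. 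A secondary point is that $\e_0$ from Proposition \ref{prop:stability} depends on the limiting configuration $\{y_k\}$, which is itself random, but this is harmless: one conditions on $P$, gets a (random but a.s.\ finite) threshold $n_1(\omega)$, and then the event in \eqref{eqdouble} has the desired probability by passing $n\to\infty$ and using that $\PP$ of the bad configurations is $<\delta$.
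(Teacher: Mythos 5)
Your high-level route---Skorokhod coupling to put $P_n := \psi_n(U_n \cap \hat B(p,Rn^{-1/d}))$ and $P$ on a common space, Proposition \ref{prop:stability} applied conditionally on the limiting configuration $P$ to produce a random but a.s.\ finite threshold $n_1(\omega)$, and then a Fatou-type passage $n\to\infty$ to convert that random threshold into a deterministic $n_0$---is sound and genuinely different from the paper's. The paper instead builds a uniform stability radius $r$ and uniform threshold $n_2$ over a compact set $W$ of configurations of measure $\geq 1-\delta/3$ (a compactness argument on $\coprod_{\ell\leq\ell_0}B(0,R)^\ell$ after excising a small neighborhood of a measure-zero degeneracy set $Z$), and then closes by a union bound with two more $\delta/3$ events. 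Your conditioning-plus-Fatou trick would shortcut that compactness step cleanly, and your observation that "for every $\gamma$" is finitary is correct and matches what the paper implicitly uses.

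However, your treatment of the boundary is wrong and this is a genuine gap. You assert that, after possibly enlarging $R$ or shrinking a boundary annulus, with probability $\geq 1-\delta$ no component of $\Pc$ touches $\partial B(0,R)$. This cannot be arranged: a component of $\Pc = \bigcup B(p_k,\alpha)$ touches $\partial B(0,R)$ exactly when some Poisson point $p_k$ lands in the annulus $\{R-\alpha\leq \|z\|\leq R+\alpha\}$, whose half-width is the \emph{fixed} constant $\alpha$, not a parameter you can shrink (a thinner annulus of half-width $\rho<\alpha$ being empty of Poisson points does not prevent a ball centered just inside $B(0,R-\rho)$ from crossing $\partial B(0,R)$). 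Moreover the expected number of Poisson points in that $\alpha$-annulus grows like $R^{d-1}$, so the probability of the event "no component meets $\partial B(0,R)$" tends to $0$, not $1$, as $R\to\infty$; "$R$ sufficiently large" makes things worse, not better. Since $\N$ counts only compactly contained components, boundary-straddling components are exactly the ones that can make the two counts disagree, so you cannot simply discard them. The paper's fix is not to avoid boundary components but to \emph{match} them: it enlarges the nondegeneracy requirement to include transversality of $\partial B(0,R)$ against the spheres $\partial B(y_j,\alpha)$ (see the definition of $Z$ and the points $\sigma_{J_2}(x)$ in the proof of point (2)), which makes the pattern of which components cross $\partial B(0,R)$ stable under small perturbations and under $\varphi_n$; this supplements Proposition \ref{prop:stability}, which by itself only controls the homotopy type of the full union and says nothing about containment in the ambient ball. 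To repair your argument you would need to add exactly this boundary-transversality stability to the "good configuration" event on which you apply a.s.\ convergence.
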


\begin{proof} 
Let $\varphi_n$ denote the inverse of 
the map $\psi_n$ defined in \eqref{eq:psi}.
For the proof of \eqref{eqdouble} we will need to establish the following three facts:
\begin{enumerate}
\item There exists $\ell_0>0$ and $n_1>0$ such that with probability at least $1-\delta/3$ we have:
\be \label{eq:points}\#(U_n\cap \B(p, Rn^{-1/d}))=\#(P \cap B(0, R))\leq \ell_0\ee
(i.e. with positive probability for large $n$, depending on $\delta$,  both point processes have the same number of points and this number is bounded by some constant $\ell_0$, which also depends on $\delta$).
\item There exists\footnote{The symbol $\coprod A_j$ denotes the disjoint union of the sets $A_j$. }  $W\subset \coprod_{\ell\leq \ell_0}B(0, R)^\ell$ , $r>0$ and $n_2\geq n_1>0$ such that $\PP(W)\geq 1-\delta/3$ and  for every $x=(y_1, \ldots, y_\ell)\in W$ if $\tilde{x}=(\tilde{y}_1, \ldots, \tilde{y}_\ell)$ is such that $\|x-\tilde x\|< r$ and  $n\geq n_2$ then:
\be\label{eq:cp} \bigcup_{k=1}^\ell B(y_i, \alpha)\simeq \bigcup_{k=1}^\ell \B(\varphi_n(\tilde y_i), \alpha n^{-1/d}),\ee
(i.e. the two spaces are homotopy equivalent), and for every connected component of $\bigcup_{k=1}^\ell B(y_i, \alpha)$ this component intersects $\partial B(0, R)$ if and only if the corresponding component of $\bigcup_{k=1}^\ell \B(\varphi_n(\tilde y_i), \alpha n^{-1/d})$ intersects $\partial \B(p, Rn^{-1/d}).$

Let us explain this condition. A point $x=(y_1, \ldots, y_\ell)$ in $W$ corresponds to the spatial Poisson event: we sample $\ell$ points and each of these points is sampled uniformly from $B(0, R)$. With probability at least $1-\delta/3$, by the previous point, the number of samples of the spatial Poisson distribution is at most $\ell_0.$ Each such  Poisson sample in $\R^d$ gives rise to a geometric complex in $\R^d$, and this complex is nondgenerate with probability one. Given such a nondegenerate complex $\bigcup_{k=1}^\ell B(y_i, \alpha)$ in $\R^d$, we can perturb ``a little'' (how little is quantified by ``$r>0$'') the point $x=(y_1, \ldots, y_\ell)$ to a point $\tilde{x}=(\tilde{y}_1, \ldots, \tilde{y}_\ell)$ inside $W$ and still get a nondegenerate complex which has the same homotopy type of the original one. Now, to each point $\tilde{x}\in W$ there also corresponds a complex $\bigcup_{k=1}^\ell \B(\varphi_n(\tilde y_i), \alpha n^{-1/d})$ in the manifold $M$, through the map $\varphi_n$. When $n$ is ``large enough'', it is natural to expect that the two complexes $\bigcup_{k=1}^\ell B(y_i, \alpha)$ and $\bigcup_{k=1}^\ell \B(\varphi_n(\tilde y_i), \alpha n^{-1/d})$ have the same homotopy type. Point (2) says that, given $\ell_0$, we can find $W$ with probability at least $1-\delta/3$, $r>0$ small enough and $n>0$ large enough such that this is true. (We also added the requirement that the intersection with the boundary of the big containing ball is the same, but the essence of point (2) is in the requirement that the two complexes should have the same homotopy type.)
\item Assuming point (1), denoting  by $\{x_1, \ldots, x_\ell\}=P_R\cap B(0, R)$,  and by $\{\tilde{x}_1, \ldots, \tilde{x}_\ell\}=\psi_n(U_n\cap \B(p, Rn^{-1/d}))$, there exists $n_3\geq n_1>0$ such that for every $n\geq n_3$:
\be\PP\left\{
\forall \ell\leq\ell_0,\, \forall k=1, \ldots, \ell, \,\|x_k-\tilde{x}_k\|\leq r\right\}\geq 1-\delta/3.
\ee
\end{enumerate}
Assuming these three facts, \eqref{eqdouble} follows arguing as follows. With probability at least $1-\delta$ for $n\geq n_0=\max\{n_1, n_2, n_3\}$ all the conditions from (1), (2) and (3) verify and the two random sets \be\bigcup_{p\in \Pc_R} B(p, \alpha)\quad \textrm{and}\quad   \bigcup_{p_k\in U_n\cap \B(p, Rn^{-1/d})} \B(p_k, \alpha n^{-1/d})\ee
are homotopy equivalent and by the second part of point (2) also the unions of all the components entirely contained in $B(0, R)$ (respectively $\B(p, Rn^{-1/d})$) are homotopy equivalent. In particular the number of components of a given homotopy type $\gamma$ is the same for both sets with probability at least $1-\delta$.

It remains to prove (1), (2) and (3). 

Point (1) follows from the fact that 
(working with the representation provided by Skorokhod's theorem),
the point process $\psi_n(U_n\cap \B(p, Rn^{-1/d}))$ converges almost surely to the Poisson point process on $B(0, R)$. In particular the sequence of random variables $\{\#(U_n\cap \B(p, Rn^{-1/d}))\}$ converges almost surely to $\#(P_R\cap B(0, R))$ and \eqref{eq:points} follows from the fact that almost sure convergence implies convergence in probability.

For point (2) we argue as follows. Given $\ell_0$ we consider the compact semialgebraic set:
\be X=\coprod_{\ell\leq\ell_0}B(0, R)^\ell.\ee
This set is endowed with the measure $d\rho$:
\be d\rho=\sum_{\ell\leq \ell_0}\frac{\mathrm{vol}(B(0, R)^\ell)}{\ell!}\chi_{B(0, R)^\ell}d\lambda_{B(0, R)^\ell}\ee
where $d\lambda$ denotes the Lebesgue measure (this is the measure induced from the Poisson distribution).

Let now $Z\subset X$ be the set of points $x=(y_1, \ldots, y_\ell)$ such that either the intersection $\bigcap_{j\in J_1} \partial B(y_{j},\alpha)$ or the intersection $\partial B(0, R)\bigcap_{j\in J_2} \partial B(y_{j},\alpha)$ is non-transversal for some index sets $J_1, J_2\in \genfrac{\{ }{\}}{0pt}{}{d}{\ell}$ (note that the generic intersection of  more than $d$ spheres will be empty). This set $Z$ is also a semialgebraic set, and it has measure zero: it cannot contain any open set, because the nondegeneracy condition is open and dense.

Let $U(Z)$ be an open neighborhood of $Z$ such that $\rho(U(Z)^c)\geq 1-\delta/3$ (for example one can take $U(Z)=\coprod_{\ell\leq \ell_0}\{d(\cdot, Z)< \e\}$ for $\e>0$ small enough). 
We set $W=U(Z)^c$ (note that $\PP(W)\geq 1-\delta/3$).

We will first argue that for every $x\in U(Z)^c\subset W$ we can find $r_2(x)>0$ and $n(x)>0$ such that the two complexes  \eqref{eq:cp} have the same homotopy type (and the same combinatorics of intersection with the boundary of the big containing ball) whenever $\|\tilde x-x\|<r_2(x)$ and $n>n(x).$ Then we will use the compactness of $U(Z)^c$ in order to find uniform $r>0$ and $n>0.$

Pick therefore $x=(y_1, \ldots, y_\ell)\in U(Z)^c$. The property of transversal intersection implies that for every index set $J_1\in \bigcup_{\ell\leq\ell_0}\genfrac{\{ }{\}}{0pt}{}{d}{\ell}$ such that the intersection $\cap_{j\in J_1}B(y_j, \alpha)$ is nonempty, this intersection contains a nonempty open set, and there exists a point $\sigma_{J_1}(x)$ such that for every $j\in J_1$ we have $\|y_j-\sigma_{J_1}(x)\|<\alpha.$ Similarly for every $ J_2\in \bigcup_{\ell\leq\ell_0}\genfrac{\{ }{\}}{0pt}{}{d}{\ell}$ whenever an intersection $\partial B(0, R)\bigcap_{j\in J_2} \partial B(y_{j},\alpha)$ is transversal and nonempty, there exists a point $\sigma_{J_2}(x)$ such that $\|\sigma_{J_2}(x)\|>R$ and for every $j\in J_2$ we have $\|y_j-\sigma_{J_2}(x)\|<\alpha.$ 
Because these are open properties, there exists $r_1(x), r_2(x)>0$ such that for every $w=(w_1, \ldots, w_\ell)$ and $z=(z_1, \ldots, z_\ell)$ with $\|w_j-y_j\|\leq r_1(x)$ and $\|z_j-w_j\|<r_2(x)$ for all $j=1, \ldots, \ell$, we have:
\be \forall j\in J_1: \,\|z_j-\sigma_{J_1}(x)\|<\alpha\quad \textrm{and}\quad \forall j\in J_2:\,\|z_j-\sigma_{J_2}(x)\|<\alpha.\ee
Moreover since the property of having non-empty intersection is also stable under small perturbations, we can assume that $r_1(x), r_2(x)$ are small enough to guarantee also that: 
\be \bigcap_{j\in J_3}B(z_j, \alpha)=\emptyset \iff \bigcap_{j\in J_3}B(x_j, \alpha)=\emptyset.\ee

Observe now that the sequence of functions $d_n:B(0, R)\times B(0, R)\to \R$ defined by:
\be d_n(x_1, x_2)=d_{\M}(\varphi_n(x_1), \varphi_n(x_2))n^{1/d}\ee
converges uniformly to the Euclidean distance in $\R^d$. In particular there exists $n(x)>0$ such that for every $n\geq n(x)$, for every $w=(w_1, \ldots, w_\ell)$ and $z=(z_1, \ldots, z_\ell)$ with $\|w_j-y_j\|\leq r_1(x)$ and $\|z_j-w_j\|<r_2(x)$, for all $j=1, \ldots, \ell$  and for $i=1, 2$ we have:
\be d_{\M}(\varphi_n(z_j), \varphi_n(\sigma_{J_i}(x)))<\alpha n^{-1/d}.\ee
Moreover, for a possibly larger $n(x)$, we also have that 
\be \bigcap_{j\in J_3}\B(\varphi_n(z_j), \alpha n^{-1/d})=\emptyset \iff \bigcap_{j\in J_3}B(x_j, \alpha)=\emptyset.\ee
Choosing $n(x)$ to be even larger, so that balls of radius smaller than $\alpha n^{-1/d}$ in $\M$ are geodesically convex, these conditions imply that the combinatorics of the covers
\be \{B(x_j, \alpha)\}_{j=1}^\ell\quad \textrm{and}\quad \{\B(\varphi_n(z_j), \alpha n^{-1/d})\}_{j=1}^\ell\ee
are the same and, by Lemma \ref{lemma:convex}, the two sets $\bigcup_{j\leq \ell}B(x_j, \alpha)$ and  $\bigcup_{j\leq \ell}\B(\varphi_n(z_j), \alpha n^{-1/d})$ are homotopy equivalent. Also, the above condition on $\sigma_{J_2}(x)$ implies that a component of $\bigcup_{j\leq \ell}B(x_j, \alpha)$ intersects $\partial B(0, R)$ if and only if the corresponding component of $\bigcup_{j\leq \ell}\B(\varphi_n(z_j), \alpha n^{-1/d})$ intersects $\B(p, Rn^{-1/d}).$

Finally, we cover now $W=X\backslash U(Z)$ with the family of open sets $\bigcup_{x\in W}B(x, r_1(x))$ and find, by compactness of $W$, finitely many points $x_1, \ldots, x_L$ such that the union of the balls  $B(x_k, r_1(x_k))$ with $k=1, \ldots, L$ covers $W$.  With the choice $n_2=\max\{n(x_k), k=1, \ldots, L\}$ and $r=\min\{r_2(x_k), k=1, \ldots, L\}$ property (2) is true.

Concerning point (3), we observe that again this follows from the fact that the point process $\psi_n(U_n\cap \B(p, Rn^{-1/d}))$ converges almost surely (hence in probability) to the Poisson point process on $B(0, R)$.
\end{proof}

\begin{cor}\label{cor:doublescaling}
For each $\gamma \in \G$, $\alpha>0$, $x \in \M$, 
and $\e>0$, we have
$$ \lim_{R \rightarrow \infty} \limsup_{n \rightarrow \infty} \PP \left\{ \left| \frac{\N(\U_n, \hat{B}(x,R n^{-1/d});\gamma)}{\Vol(B_R)} - c_\gamma \right| > \e \right\} = 0 .$$
\end{cor}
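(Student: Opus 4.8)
The plan is to chain together Theorem \ref{thm:coupling} with the $L^1$-convergence of the Euclidean counting ratio established in Proposition \ref{prop:ergodic}. Fix $\gamma\in\G$, $\alpha>0$, $x\in\M$, and $\e>0$. First I would recall that, for fixed $R$, Theorem \ref{thm:coupling} provides a coupling (via Skorokhod's theorem) of the point process $\psi_n(\U_n\cap\hB(x,Rn^{-1/d}))$ with the Poisson process $P$ on $B(0,R)$ such that, for every $\delta>0$ and $R$ sufficiently large, there is $n_0=n_0(R,\delta)$ with
\[
\PP\left\{\N(\Pc,B(0,R);\gamma)=\N(\U_n,\hB(x,Rn^{-1/d});\gamma)\right\}\geq 1-\delta
\]
for all $n\geq n_0$. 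On this event the two normalized counts $\N(\U_n,\hB(x,Rn^{-1/d});\gamma)/\Vol(B_R)$ and $c_{R,\gamma}=\N(\Pc,B(0,R);\gamma)/\Vol(B_R)$ coincide.

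Next I would invoke Proposition \ref{prop:ergodic}: as $R\to\infty$, $c_{R,\gamma}\to c_\gamma$ in $L^1$, hence in probability. So, given $\e>0$, I would choose $R$ large enough that (i) Theorem \ref{thm:coupling} applies and (ii) $\PP\{|c_{R,\gamma}-c_\gamma|>\e\}$ is as small as we like for all $R$ beyond some threshold — more precisely, for any $\delta>0$ we can pick $R_0$ so that $\PP\{|c_{R,\gamma}-c_\gamma|>\e\}<\delta/2$ for all $R\geq R_0$ (possibly passing along a value of $R$ where the coupling works; since the $L^1$ convergence is along all $R\to\infty$ this is automatic). Then, with $\delta$ as the coupling error, for $R\geq R_0$ and $n\geq n_0(R,\delta)$,
\[
\PP\left\{\left|\frac{\N(\U_n,\hB(x,Rn^{-1/d});\gamma)}{\Vol(B_R)}-c_\gamma\right|>\e\right\}
\leq \PP\{\text{coupling fails}\}+\PP\{|c_{R,\gamma}-c_\gamma|>\e\}<\delta.
\]
Taking $\limsup_{n\to\infty}$ of the left side gives a bound $\leq\delta$ for all $R\geq R_0$, and then $\lim_{R\to\infty}$ of that is $\leq\delta$; since $\delta>0$ was arbitrary, the stated double limit is $0$.

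The only subtlety — and the step to be careful with — is the order of quantifiers: Theorem \ref{thm:coupling} gives, for each fixed $R$, a coupling valid for large $n$, but the couplings for different $R$ need not be compatible, and the conclusion of the corollary only involves the \emph{law} of $\N(\U_n,\hB(x,Rn^{-1/d});\gamma)$, so we are free to use a different coupling for each $R$. Thus no consistency across $R$ is needed: for each $R$ we bound $\limsup_n$ of a probability using that $R$'s coupling, and only afterwards send $R\to\infty$ using Proposition \ref{prop:ergodic}. This is precisely the structure of the displayed double limit, so no genuine obstacle arises; the argument is a clean concatenation of the two previously established results.
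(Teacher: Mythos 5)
Your proof is correct and follows essentially the same approach as the paper: a union bound combining the coupling from Theorem \ref{thm:coupling} (valid for each fixed $R$ and large $n$) with the convergence in probability of $c_{R,\gamma}$ to $c_\gamma$ from Proposition \ref{prop:ergodic}. One minor numerical slip --- with coupling-failure probability at most $\delta$ and $\PP\{|c_{R,\gamma}-c_\gamma|>\e\}<\delta/2$, the union bound gives $3\delta/2$ rather than $\delta$ --- but this is immaterial since $\delta$ is arbitrary, and your observation that the couplings may legitimately vary with $R$ (since the corollary concerns only the law of the manifold count) correctly identifies and disposes of the one genuine subtlety.
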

\begin{proof}
This follows from Theorem \ref{thm:coupling}
combined with Proposition \ref{prop:ergodic}.
Indeed, let $\e>0$ and $\delta>0$ be arbitrary.
By Proposition \ref{prop:ergodic}
there exists $R_0$ such that for $R > R_0$
we have
\be
\PP\left\{ \left|\frac{\N(\Pc, B(x,R);\gamma)}{\Vol(B_R)} - c_\gamma \right| > \e \right\} < \delta.
\ee
Fix any such $R>R_0$.
The event 
\be 
\left| \frac{\N(\U_n, \hat{B}(x,R n^{-1/d});\gamma)}{\Vol(B_R)} - c_\gamma \right|>\e
\ee
is contained in the union of the event $E_1$ that
\be
\left|\frac{\N(\Pc, B(x,R);\gamma)}{\Vol(B_R)} - c_\gamma \right| > \e
\ee
and another event $E_\delta$,
which is the event that
$\N(\Pc, B(x,R);\gamma) \neq \N(\U_n, \hat{B}(x,R n^{-1/d});\gamma) $.
Thus,
\be\label{eq:union}
\PP \left\{ \left| \frac{\N(\U_n, \hat{B}(x,R n^{-1/d});\gamma)}{\Vol(B_R)} - c_\gamma \right| > \e \right\} \leq \PP \{ E_1 \} + \PP \{ E_\delta \} < \delta + \PP \{ E_\delta \}.
\ee

By Theorem \ref{thm:coupling}, there exists $n_0$ such that
for all $n \geq n_0$ we have $\PP \{ E_\delta \} \leq \delta$.
 
Thus, applying this to \eqref{eq:union} we obtain
\be
\limsup_{n \rightarrow \infty} \PP \left\{ \left| \frac{\N(\U_n, \hat{B}(x,R n^{-1/d});\gamma)}{\Vol(B_R)} - c_\gamma \right| > \e \right\} < 2\delta.
\ee
Since $\delta>0$ was arbitrary,
this completes the proof of Corollary \ref{cor:doublescaling}.
\end{proof}

\section{The global count for the Riemmanian case: 
proof of Theorem \ref{thm:main1}}\label{sec:global}

In this section we establish the limit law in the manifold setting (cf. \cite[Sec. 7]{SarnakWigman}).
As in the Euclidean case,
the main step is to prove coefficient-wise convergence,
which is stated in the following theorem.

\begin{thm}\label{thm:gammafixed}
For every $\gamma \in \G$, the random variable
\be c_{n, \gamma}=\frac{\N(\U_n,\M;\gamma)}{n}\ee
converges in $L^1$ to the constant $c_\gamma = c_\gamma(\alpha)$
(the same constant as in Proposition \ref{prop:ergodic}). The same statement is true for the random variable \be c_n=\frac{\mathcal{N}(\mathcal{U}_n, M)}{n}\ee (i.e. when we consider all components, with no restriction on their type): as $n\to \infty$, it converges in $L^1$ to the constant $c=\sum_{\gamma\in \G}c_\gamma$. 
\end{thm}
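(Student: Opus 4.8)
\textbf{Proof proposal for Theorem \ref{thm:gammafixed}.}

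The plan is to mimic the Euclidean argument of Proposition \ref{prop:ergodic}, but now using the Integral Geometry Sandwich on the manifold (Theorem \ref{thm:IGSmanif}) together with the double-scaling result (Corollary \ref{cor:doublescaling}) as the replacement for the ergodic theorem. First I would fix $\gamma \in \G$ and $\e>0$. Applying Theorem \ref{thm:IGSmanif} with $r = Rn^{-1/d}$, and recalling that $\Vol(M)=1$ and the expected number of points in $\hat B(x, Rn^{-1/d})$ is $n \Vol(\hat B(x, Rn^{-1/d})) \approx \Vol(B_R)$, I would write the sandwich in the normalized form
\be
\frac{1-\e}{n}\int_{\M} \frac{\N(\U_n, \hB(x, Rn^{-1/d});\gamma)}{\Vol(B_R)}\, dx \;\lesssim\; \frac{\N(\U_n, \M;\gamma)}{n} \;\lesssim\; \frac{1+\e}{n}\int_{\M} \frac{\N^*(\U_n, \hB(x, Rn^{-1/d});\gamma)}{\Vol(B_R)}\, dx,
\ee
where the $\N^*$ on the right must be controlled by $\N$ plus a boundary-error term, exactly as in \eqref{eq:N*est}: any component meeting $\partial \hB(x, Rn^{-1/d})$ contains a point of $U_n$ in a geodesic annulus of width $O(\alpha n^{-1/d})$, and the expected number of such points, integrated over $x \in \M$ and divided by $\Vol(B_R)$, is $O(R^{-1})$ uniformly in $n$ (this is where compactness of $\M$ and bounded geometry are used to make the annulus-volume estimate uniform).

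Next I would pass to expectations and take $n \to \infty$ first, then $R \to \infty$. The key point is that, by Fubini,
\be
\EE \left[ \frac{1}{n}\int_{\M} \frac{\N(\U_n, \hB(x, Rn^{-1/d});\gamma)}{\Vol(B_R)}\, dx \right] = \int_{\M} \EE\!\left[ \frac{\N(\U_n, \hB(x, Rn^{-1/d});\gamma)}{n\,\Vol(B_R)} \right] dx,
\ee
and by Corollary \ref{cor:doublescaling} (after accounting for the factor $n\Vol(\hB(x,Rn^{-1/d})) \to \Vol(B_R)$) the integrand converges, in the iterated limit $\limsup_n$ then $R \to \infty$, to $c_\gamma$ for each fixed $x$. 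Since the integrand is bounded (by $n\Vol(\hB(x,Rn^{-1/d}))/(n\Vol(B_R))$, which is uniformly bounded by bounded geometry), dominated convergence lets me pass the iterated limit through $\int_\M dx$, using $\Vol(M)=1$, to get that both the upper and lower normalized integrals converge in expectation to $c_\gamma$. Combined with the $O(R^{-1})$ boundary term and the $(1\pm\e)$ factors, letting $\e \to 0$ yields $\EE[c_{n,\gamma}] \to c_\gamma$, and more precisely an estimate of the form $\EE\, c_{n,\gamma} \in [(1-\e)(c_\gamma - \e), (1+\e)(c_\gamma + O(R^{-1}) + \e)]$ for $R, n$ large.

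To upgrade expectation convergence to $L^1$ convergence I would exploit that $c_{n,\gamma} \geq 0$: for nonnegative random variables, if $\EE\, c_{n,\gamma} \to c_\gamma$ it suffices to show the deviations $\PP\{|c_{n,\gamma} - c_\gamma| > \e\}$ are small, but in fact the cleanest route is to prove the \emph{un-normalized} total count $c_n = \N(\U_n,\M)/n$ converges in $L^1$ to $c$ (by the same sandwich with the version of Theorem \ref{thm:IGSmanif} for all components from Remark \ref{rmk:IGS}, and using $c = \sum_\gamma c_\gamma$, which requires the Euclidean fact $\sum_\gamma c_\gamma = c$ already available from Proposition \ref{prop:probmeas}), and then observe $0 \leq c_{n,\gamma} \leq c_n$ with $c_{n,\gamma} \to c_\gamma$ in probability (from the expectation bound and Markov, or directly from Corollary \ref{cor:doublescaling} plus the sandwich giving a concentration statement). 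Since $c_n \to c$ in $L^1$ and $\sum_\gamma c_{n,\gamma} = c_n$, a Scheffé-type / dominated-convergence argument on the finite-plus-tail decomposition forces $c_{n,\gamma} \to c_\gamma$ in $L^1$ as well. The main obstacle I anticipate is the double limit bookkeeping: Corollary \ref{cor:doublescaling} only gives an \emph{iterated} limit ($R$ after $n$) at a single point $x$, so I must be careful that the error terms (the $O(R^{-1})$ boundary term, the $(1\pm\e)$ sandwich factors, and the $n\Vol(\hB)/\Vol(B_R) \to 1$ discrepancy) are all uniform in $x \in \M$ and can be absorbed before the limits are taken in the correct order — this uniformity is exactly what the compactness of $\M$ buys, and it is the crux of why the ``universal'' constant $c_\gamma$ is the same as in the Euclidean model.
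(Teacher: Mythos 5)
Your overall skeleton---integral geometry sandwich on $\M$ with $r = Rn^{-1/d}$, followed by the $O(R^{-1})$ geodesic-annulus estimate controlling $\N^*$ minus $\N$---coincides with the paper's. Where you diverge is in handling the $x$-integral and upgrading to $L^1$. The paper's route is Egorov's theorem: it extracts a set $\M_\delta$ of almost full measure on which the double-limit in Corollary \ref{cor:doublescaling} is uniform, then splits the expectation $\EE|I_1 - c_\gamma|$ over the ``bad'' event $\Omega_{x,R,n}$ and over $\M_\delta$ vs.\ $\M\setminus\M_\delta$, using the deterministic bound $\N_n(x,Rn^{-1/d})/\Vol(B_R) = O(1)$ that comes from a minimum-component-volume argument. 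You propose Fubini plus dominated convergence instead, which is in principle a legitimate alternative (one can bound $\EE|I_1 - c_\gamma| \le \int_\M \EE\bigl|\N_n(x,Rn^{-1/d})/\Vol(B_R) - c_\gamma\bigr|\,dx$ by Tonelli, observe the integrand is uniformly bounded and has iterated limit $0$ at each $x$ by Corollary \ref{cor:doublescaling} together with the deterministic $O(1)$ bound, and then apply reverse Fatou in $n$ and dominated convergence in $R$). But as written your argument does not establish $L^1$ convergence.

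The concrete gap: your DCT step only shows convergence of the \emph{means} $\EE[c_{n,\gamma}] \to c_\gamma$, because you pass the limit through $\EE\bigl[\int_\M \cdots\,dx\bigr]$ rather than through $\EE\bigl|\int_\M\cdots\,dx - c_\gamma\bigr|$; convergence of means for nonnegative variables does not imply $L^1$ convergence. Your proposed repair---prove $c_n\to c$ in $L^1$ first and then use $0\le c_{n,\gamma}\le c_n$ plus a Scheff\'e-type argument---is circular, since $c_n \to c$ in $L^1$ would be proved by the very same method and thus inherits the same gap. Moreover, the Scheff\'e step needs $c_{n,\gamma}\to c_\gamma$ in probability, and neither of the justifications you offer works: Markov's inequality from an expectation bound gives only a one-sided tail estimate, and Corollary \ref{cor:doublescaling} is a pointwise-in-$x$ statement that does not by itself yield concentration of the $x$-integral $I_1$ (controlling that non-uniformity in $x$ is precisely the role of Egorov in the paper, or of the reverse-Fatou step in the corrected DCT route). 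You need to aim the bounded-convergence machinery directly at $\EE|I_1 - c_\gamma|$ rather than at $\EE[I_1]$. Finally, a small bookkeeping slip: your normalized sandwich carries a spurious factor of $1/n$; since $\Vol(B_{Rn^{-1/d}}) = \Vol(B_R)/n$, dividing the sandwich by $n$ and rewriting the denominator as $\Vol(B_R)$ already absorbs the $n$, so the inequality should read $(1-\e)\int_\M \N_n(x,Rn^{-1/d})/\Vol(B_R)\,dx \le \N_n/n \le (1+\e)\int_\M \N^*_n(x,Rn^{-1/d})/\Vol(B_R)\,dx$ with no extra $1/n$.
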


\subsection{Proof of Theorem \ref{thm:main1} assuming Theorem \ref{thm:gammafixed}}
Since convergence in $L^1$ implies convergence in probability, Theorem \ref{thm:gammafixed} ensures that the random variable $c_{n,\gamma}=\frac{\mathcal{N}(\mathcal{U}_n, M;\gamma)}{n}$ converges in probability to the constant $c_\gamma$; similarly the random variable $c_n=\frac{\mathcal{N}(\mathcal{U}_n, M)}{n}$ converges in $L^1$ (hence in probability) to $c>0$.
The proof now proceeds similarly to the proof of Theorem \ref{thm:mainpoisson}. We write the measure $\hat\mu_n$ as:
\begin{align}
\hat\mu_n&=\frac{1}{b_0(\Check{C}(\U_n))}\sum_{\gamma\in \hat{G}}\N(\U_n, M;\gamma)\delta_\gamma\\
&=\frac{1}{b_0(\Check{C}(\U_n))}\left(\sum_{\gamma\in \G}\N(\U_n, M;\gamma)\delta_\gamma+\sum_{\gamma\in \hat\G\backslash \G}\N(\U_n, M;\gamma)\delta_\gamma\right)\\
&=\frac{1}{\N(\U_n, M)}\sum_{\gamma\in \G}\N(\U_n, M;\gamma)\delta_\gamma+\frac{1}{\N(\U_n, M)}\sum_{\gamma\in \hat\G\backslash \G}\N(\U_n, M;\gamma)\delta_\gamma\\
&=\frac{n}{\N(\U_n, M)}\sum_{\gamma\in \G}\frac{\N(\U_n, M;\gamma)}{n}\delta_\gamma+\frac{n}{\N(\U_n, M)}\sum_{\gamma\in \hat\G\backslash \G}\frac{\N(\U_n, M;\gamma)}{n}\delta_\gamma\\
\label{eq:laststep}&=\sum_{\gamma\in \G}\frac{c_{n, \gamma}}{c_n}\delta_\gamma+\sum_{\gamma\in \hat\G\backslash\G}\frac{c_{n, \gamma}}{c_n}\delta_\gamma.
\end{align}

We have $a_{n,\gamma}=\frac{c_{n, \gamma}}{c_n}$ converges in $L^1$ to the constant
$a_\gamma=\frac{c_\gamma}{c}$.
Recalling that the measure
$$\mu = \sum_{\gamma \in \G} a_\gamma \delta_\gamma $$
is a probability measure (see Proposition \ref{prop:probmeas}),
we can apply Lemma \ref{lemma:cvg}
to conclude that the measure on the left in \eqref{eq:laststep} converges in probability to $\mu$.

Since $\mu$ is a probability measure,
this implies that
$$\sum_{\gamma\in \G}\frac{c_{n, \gamma}}{c_n}$$
converges to $1$ in probability.
For any $\gamma_0 \in \hat\G\backslash\G$
this implies that the coefficient $\frac{c_{n, \gamma_0}}{c_n}$ appearing in the measure 
on the right in \eqref{eq:laststep} converges to zero in probability,
since 
$$0 \leq \frac{c_{n, \gamma_0}}{c_n} \leq 1 - \sum_{\gamma\in \G}\frac{c_{n, \gamma}}{c_n}.$$
Thus, the measure $\hat\mu_n$ converges in probability to $\mu$ by another application of Lemma \ref{lemma:cvg}.

\subsection{Proof of Theorem \ref{thm:gammafixed}}
{\bf Note:} Since $\alpha>0$ and $\gamma \in \G$ are fixed, we will simply use
\begin{equation}\label{eq:simplenotation}
 \N_n := \N(\U_n,\M;\gamma)
\end{equation}
to denote the number of components of $\U_n$ in $\M$ of type $\gamma$.
We will use
$$\N^*_n(x,r) := \N^*(\U_n,\hB(x,r);\gamma)$$
to denote the number of such components intersecting the geodesic ball $\hB(x,r)$ of radius $r$ centered at $x$
and 
$$\N_n(x,r):=\N(\U_n,\hB(x,r);\gamma)$$
to denote the number of components 
completely contained in $\hB(x,r)$.

Thus, our goal, stated in the abbreviated notation \eqref{eq:simplenotation},
is to prove
\be\label{eq:goalmanif}
\EE \left[ \left| \frac{\N_n}{n} - c_\gamma \right| \right] \rightarrow 0.
\ee

Using the integral geometry sandwich 
from Theorem \ref{thm:IGSmanif} 
we have
\be \label{eq:sandn}
(1-\e) \int_{\M} \frac{\N_n(x,Rn^{-1/d})}{\Vol \left( B_{R} \right)} dx
\leq \frac{\N_n}{n} \leq (1+\e) \int_{\M} \frac{\N^*_n(x,R n^{-1/d})}{\Vol \left( B_{R} \right)} dx.
\ee 
Letting $I_1$ denote the integral on the left side and $I_2$ the one on the right side, we subtract $I_1$ from each part of \eqref{eq:sandn} and write
\be \label{eq:modsandwich}
-\e I_1
\leq \frac{\N_n}{n} - I_1 \leq
\e I_1 + (1+\e) (I_2 - I_1).
\ee
In order to estimate $I_2 - I_1$
we note that the number of
connected components of $\U_n$ that intersect,
but are not completely contained in, 
the geodesic ball $\hB(x,R n^{-1/d})$
is bounded above by the number of points that fall within distance $\alpha n^{-1/d}$ to the boundary $\p \hB(x,R n^{-1/d})$.
This $\alpha n^{-1/d}$-neighborhood of $\p \hB(x,R n^{-1/d})$ is the same as the geodesic annulus centered
at $x$ with inner radius $(R-\alpha)n^{-1/d}$
and outer radius $(R+\alpha)n^{-1/d}$.
The average number of points in this annulus
equals its volume which can be estimated
(uniformly over $x \in \M$)
by that of the Euclidean annulus,
and this gives
\begin{equation}\label{eq:I2minusI1}
\EE |I_2 - I_1| = O(R^{-1}).
\end{equation}
which together with \eqref{eq:modsandwich} implies
\begin{equation}\label{eq:addedanewone1}
 \EE \left| \frac{\N_n}{n} - I_1 \right|
= O(\e) + O(R^{-1}),
\end{equation}
where we have also
used $I_1 \leq \frac{1}{1-\e}$
which follows from the first inequality in \eqref{eq:sandn} along with the simple estimate $\N_n \leq n$.

By \eqref{eq:addedanewone1} we obtain
\begin{align}
\EE \left[ \left| \frac{\N_n}{n} - c_\gamma \right| \right] &= \EE \left[ \left| \frac{\N_n}{n} - I_1 + I_1 - c_\gamma \right| \right] \\
&\leq \EE \left[ \left| I_1 - c_\gamma \right| \right] + O(\e) + O(R^{-1}). \\
&= \EE \left[ \left| \int_\M \frac{\N_n(x,R n^{-1/d})}{\Vol(B_{R})} - c_\gamma \, dx \right| \right] + O(\e) + O(R^{-1}).
\end{align}

Thus, in order to prove the theorem it suffices to show that
the above term $\EE \left[ \left| I_1 - c_\gamma \right| \right] $
can be made arbitrarily small for all sufficiently large
$n$.

Define the ``bad'' event
$$ \Omega_{x,R,n} := \left\{ \left| \frac{\N_n(x,R n^{-1/d})}{\Vol (B_{R})} - c_\gamma \right| > \e \right\}.$$

Claim: There exists a sequence
$R_j \rightarrow \infty$
such that for every $\delta>0$
there exists $\M_\delta \subset \M$
with $\Vol (\M_\delta) > 1 - \delta$
such that
\be\label{eq:EgorovClaim}
\lim_{R_j \rightarrow \infty} \limsup_{n \rightarrow \infty}
\sup_{x \in \M_\delta} \PP \left( \Omega_{x,R_j,n} \right) = 0.
\ee
 
The proof of this claim closely follows 
\cite{SarnakWigman} and uses Egorov's 
theorem as well as the idea from the proof of Egorov's theorem.
We start by recalling the point-wise 
limit stated in Corollary \ref{cor:doublescaling}.
For each $x \in \M$, we have
$$ \lim_{R \rightarrow \infty} \limsup_{n \rightarrow \infty} \PP \left\{ \Omega_{x,R,n} \right\} = 0.$$
Let us restrict to $R \in \NN$.
Apply Egorov's theorem
to obtain $M_{\delta}' \subset M$
with $\Vol(M_{\delta}') > 1- \frac{\delta}{2}$ such that
\begin{equation}\label{eq:Egorov1}
\lim_{R \rightarrow \infty} \sup_{x \in M_{\delta}'} \limsup_{n \rightarrow \infty} \PP \left\{ \Omega_{x,R,n} \right\} = 0.
\end{equation}

Next we use an additional Egorov-type argument
in order to obtain the statement
in the claim (where we will obtain the set
$M_\delta$ by slightly shrinking $M_\delta'$).
For each fixed integer $j>0$,
we can find by \eqref{eq:Egorov1} an
$R_j \in \NN$ sufficiently large so that
\begin{equation}\label{eq:Egorov2}
\sup_{x \in M_{\delta}'} \limsup_{n \rightarrow \infty} \PP \left\{ \Omega_{x,R_j,n} \right\} < \frac{1}{j}.
\end{equation}

Letting $F_{m}(j)$ denote the monotone decreasing (with $m$) sequence of sets
$$F_{m}(j) = \bigcup_{k \geq m} \left\{ x \in M_\delta' : \PP (\Omega_{x,R_j,k}) > \frac{2}{j} \right\},$$
we see from \eqref{eq:Egorov2} that
$$\bigcap_{m \geq 1} F_m(j) = \emptyset.$$
Thus,
there exists $m=m(j)$ such that
$\Vol(F_m(j)) < \frac{\delta}{2^{j+1}}$.
We take 
$$M_\delta = M_\delta' \setminus \left( \bigcup_{j\geq 1} F_{m(j)}(j) \right), $$
which satisfies 
$\Vol(M_\delta) > \Vol(M_\delta') - \frac{\delta}{2} > 1 - \delta$.
It follows from the definition of 
$F_m(j)$ that
\begin{equation}\label{eq:Egorov3}
\limsup_{n \rightarrow \infty} 
\sup_{x \in M_{\delta}} \PP \left\{ \Omega_{x,R_j,n} \right\} \leq \frac{2}{j},
\end{equation}
and we see that \eqref{eq:EgorovClaim} 
is satisfied.

Denoting the whole probability space
as $\Omega$, we separate the integration (defining the expectation)
over the two sets $\Omegabad$ and $\Omega \setminus \Omegabad$.

\be\label{eq:A}
\EE \left[ \left| I_1 - c_\gamma \right| \right] = \int_{\Omega \setminus \Omegabad}\left|I_1- c_\gamma \right| d\omega + \int_{\Omegabad} \left|I_1- c_\gamma \right| d\omega.
\ee

We use the definition of
$\Omegabad$ to estimate the first integral in \eqref{eq:A}:

\be\label{eq:B}
\int_{\Omega \setminus \Omegabad}\left|I_1 - c_\gamma \right| d \omega 
\leq 
\int_{\Omega \setminus \Omegabad}  \int_\M \left| \frac{\N_n(x,R n^{-1/d})}{\Vol(B_{R})} - c_\gamma \right| dx d\omega \leq  \e.
\ee

For the second integral in \eqref{eq:A},
we use the estimate
\be\label{eq:minvol}
\frac{\N_n(x,R n^{-1/d})}{\Vol(B_R)} \leq (1+\e) n \xi^{-1} = O(1),
\ee
where $\xi>0$ is the minimum (over $x \in \M$) volume of a geodesic ball of radius $\alpha n^{-1/d}$,
which is uniformly (over $x \in \M$)
comparable to the volume of the Euclidean ball
of the same radius,
and hence $\xi$ is bounded below by a constant times $n^{-1}$. 
The estimate 
\eqref{eq:minvol} is based on the fact that each component has volume trivially at least
a constant times
$\alpha^d/n$,
and the fact that the minimal volume of a component times the number of components cannot exceed the volume of the region where they are contained (while fixing attention on components of type $\gamma$ as we are throughout the proof).
Applying \eqref{eq:minvol}, we obtain

\begin{align}\label{eq:C}
\int_{\Omegabad}\left|I_1 - c_\gamma \right| d \omega 
&\leq \int_\M 
\int_{\Omegabad}  \left| \frac{\N_n(x,R n^{-1/d})}{\Vol(B_{R})} - c_\gamma \right| d\omega dx \\
&\leq \left(O(1) + c_\gamma \right) \cdot \int_{\M} \PP(\Omegabad ) dx \\
&= O(1) \cdot \int_{\M} \PP(\Omegabad ) dx.
\end{align}

Next, we split this last integration over
$\M_\delta$ and $\M \setminus \M_\delta$:

\begin{align}\label{eq:D}
\int_{\M} \PP(\Omegabad) dx
&= \int_{\M_\delta} \PP(\Omegabad) dx
+  \int_{\M \setminus \M_\delta} \PP(\Omegabad) dx \\
&\leq \sup_{x \in \M_\delta} \PP(\Omegabad)
+ \delta.
\end{align}
Bringing the estimates \eqref{eq:A}, \eqref{eq:B}, \eqref{eq:C}, \eqref{eq:D} together, we have
\be
\EE \left[ \left| I_1 - c_\gamma \right| \right] \leq
\e + O(1) \left(\delta + \sup_{x \in \M_\delta} \PP(\Omegabad)\right),
\ee
which can be made arbitrarily small
using \eqref{eq:EgorovClaim}.
This establishes \eqref{eq:goalmanif}
and completes the proof of the first part of Theorem \ref{thm:gammafixed}.
The proof of the second part
concerning the count for all components (without restriction on homotopy type) follows from the same proof while replacing the integral geometry sandwich with its more basic version (see Remark \ref{rmk:IGS} in Section \ref{sec:prelim}).



\section{Positivity of all coefficients}\label{sec:quant}

In this section, we prove Propositions \ref{prop:alleuclid} and \ref{prop:all}.

\subsection{Proof of Proposition \ref{prop:alleuclid}}

Recall that, by Proposition \ref{prop:ergodic}, for every $\gamma \in \G$ we have:
\be c_\gamma=\lim_{R\to \infty}\mathbb{E}\left(\frac{\N(\Pc, B(0,R);\gamma)}{\Vol (B(0,R))}\right).\ee

The desired lower bound will come from
adding up certain local contributions
provided by the following lemma
\begin{lemma}\label{lemma:all}
Let $\Pc_0 \subset \R^d$ be a finite geometric complex. Fix $\alpha>0$. There exist $r, a>0$ (depending on $\Pc_0$ and $\alpha$) such that for any $p \in \R^d$
\be \PP\left\{\Pc \cap B(p, r)\simeq \Pc_0\right\}>a.\ee
\end{lemma}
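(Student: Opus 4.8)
The plan is to produce the prescribed local configuration by a direct event-counting argument on the Poisson process. Write $\Pc_0 = \bigcup_{k=1}^\ell B(y_k,\alpha)$ for some points $y_1,\dots,y_\ell \in \R^d$; by translating we may assume all $y_k$ lie in a ball $B(0,\rho_0)$ for some $\rho_0$ depending only on $\Pc_0$. By Lemma \ref{rmk:nondeg} (applying its proof, i.e. replacing $\alpha$ by a slightly larger radius if necessary), we may assume $\Pc_0$ is nondegenerate, so there is an $\e_0>0$ such that any perturbation $\tilde y_k$ with $\|\tilde y_k - y_k\| < \e_0$ for all $k$ yields a geometric complex $\bigcup_k B(\tilde y_k,\alpha)$ with the same combinatorics of intersections among the balls, hence the same Cech complex and (by the Nerve Lemma) the same homotopy type as $\Pc_0$. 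This is the stability input that turns an ``exact configuration'' requirement into a positive-probability event.

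Now fix $r$ to be chosen, and consider the event $E$ defined by the conjunction of: (i) each of the $\ell$ disjoint balls $B(y_k,\e_0/2)$ (assume $r$ large enough, and $\e_0$ small enough relative to the minimal pairwise distance among distinct $y_k$, that these are pairwise disjoint and contained in $B(p,r)$ after translating the $y_k$ to be centered at $p$) contains exactly one point of $P$; and (ii) the region $B(p,r) \setminus \bigcup_k B(y_k,\e_0/2)$ contains no points of $P$, and moreover the annular shell just outside $B(p,r)$ of width $\alpha$ contains no points of $P$ (this last condition ensures that no ball $B(q,\alpha)$ centered at an outside point $q$ reaches into $B(p,r)$, so that $\Pc \cap B(p,r)$ really is determined by the $\ell$ points inside — here one should take $r$ somewhat larger than $\rho_0 + \alpha$ so that $\bigcup_k B(\tilde y_k,\alpha)$ is compactly contained in $B(p,r)$ and no truncation occurs). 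On the event $E$, letting $\tilde y_k$ denote the unique point of $P$ in $B(y_k,\e_0/2)$, we have $\|\tilde y_k - y_k\| < \e_0$, hence $\Pc \cap B(p,r) = \bigcup_k B(\tilde y_k,\alpha) \simeq \Pc_0$ by the stability statement above.

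Finally, by the defining properties of the spatial Poisson process — independence over disjoint regions, and $\PP\{P \cap A \text{ has exactly one point}\} = \Vol(A) e^{-\Vol(A)}$, $\PP\{P \cap A = \emptyset\} = e^{-\Vol(A)}$ — the probability of $E$ factors as a product of finitely many strictly positive quantities, each depending only on $\ell$, $\e_0$, $\rho_0$, $\alpha$, $r$ and $d$, and in particular not on $p$ (by translation invariance of the Poisson process). Hence $\PP(E) =: a > 0$ works. The only real subtlety — and the step I would be most careful about — is item (ii): one must rule out contributions to $\Pc \cap B(p,r)$ coming from Poisson points lying \emph{outside} $B(p,r)$ but within distance $\alpha$ of it, which is why the empty annular shell of width $\alpha$ is included in $E$; with that included the argument is routine. (Note this lemma asserts only a statement about the intersection $\Pc \cap B(p,r)$ as a set, not about isolated connected components, so no further work separating $\Pc_0$ from the rest of $\Pc$ is needed here — that refinement is the business of Proposition \ref{prop:all}.)
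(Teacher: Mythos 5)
Your proposal is correct and follows essentially the same strategy as the paper's proof: normalize to balls of radius $\alpha$ via nondegeneracy (Lemma \ref{rmk:nondeg}), use the stability margin $\e_0$, and define the good event requiring exactly one Poisson point in each small ball $B(y_k,\e_0/2)$ and no other points in $B(p,r+\alpha)$, then conclude positivity by independence of the Poisson process on disjoint regions. Your explicit remark about the width-$\alpha$ annular shell (to rule out contributions from points just outside $B(p,r)$) is exactly the role played by the paper's condition that the $\ell$ sampled points are the only points of $P$ in $B(0,r+\alpha)$, and your attention to disjointness of the balls $B(y_k,\e_0/2)$ is a detail the paper leaves implicit.
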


\begin{proof}[Proof of Lemma \ref{lemma:all}]
By the translation invariance of the Poisson point process, it is enough to prove the statement while taking $p=0$.
We can assume, by Remark \ref{rmk:nondeg} above,
that $\Pc_0$ is nondegenerate, and we write
\be \Pc_0 =  \bigcup_{k=1}^\ell B(y_k, \alpha) ,\ee
where $y_1, \ldots, y_\ell\in \R^d$,
and we have taken the radius to be $\alpha$,
since we can dilate the entire set $\Pc_0$ if necessary (which does not change the homotopy type). Choose $r$ such that $\Pc_0 \subset B(0,r-\alpha)$.
By nondegeneracy, there exists $\e>0$ such that if $\|\tilde{y}_k-y_k\|\leq \e$ then the two complexes $\bigcup_{k=1}^{\ell}B(y_k, \alpha)$ and $\bigcup_{k=1}^\ell B(\tilde y_k,\alpha)$ are homotopy equivalent. 

We are thus interested in the event $E$ that for $j=1,2,...,\ell$ each ball
$B(y_j,\e)$ contains exactly one point from the random set $P=\{p_1,p_2,...\}$ and that these $\ell$ points are the only points of $P$ in the ball $B(0,r+\alpha)$.
If $E$ occurs then we have 
$\Pc \cap B(0,r) \cong \Pc_0$
as desired.
The positivity of the probability of the event $E$
can be seen by noticing that this probability is a product of
finitely many positive probabilities.
Indeed, $E$  is an intersection of the events
that $B(y_j,\e)$ contains a single point from $P$ for $j = 1,2,...,\ell$ along with the event that there are no points in the set $B(0,r+\alpha) \setminus \cup_{i=1}^\ell B(y_j,\e).$
These events are independent by a basic property of
the Poisson point process,
and each of them has positive probability
(the probabilities can be specified explicitly in terms of the volumes of the sets involved).
\end{proof}

Let $r, a$ be given by Lemma \ref{lemma:all} for the choice of $[\Pc_0]=\gamma$. 
For an appropriate $\beta>0$
we can fit $k \geq \beta \Vol(B(0,R))$
many disjoint Riemannian balls \be B_1=B(p_1, r),\, \ldots,\, B_k= B(p_k, r)\ee
in $B(0,R)$.
Observing that
\be \N(\Pc, B(0,R);\gamma)\geq\sum_{j=1}^k\N(\Pc, B_j;\gamma),\ee
we have (also using linearity of expectation)
\be \mathbb{E}\left(\frac{\N(\Pc, B(0,R);\gamma)}{\Vol (B(0,R))}\right)\geq \beta a>0,\ee
and the positivity of $c_\gamma$ follows.

\subsection{Proof of Proposition \ref{prop:all}}

The positivity of coefficients $a_\gamma$ in Theorem \ref{thm:main1}
is already established in Proposition \ref{prop:alleuclid} proved above.
In this section, we prove
the related Proposition \ref{prop:all} which provides a more direct analysis in the manifold setting.

\begin{proof}
Let $y_1, \ldots, y_\ell\in \R^d$ and $r>0$ such that 
\be \bigcup_{k=1}^\ell B(y_k, r) =\Pc\ee with $\bigcup_{k=1}^\ell B(y_k, r) $ a nondegenerate complex (it is not restrictive to consider nondegenerate complexes by Remark \ref{rmk:nondeg} above).
Let now $R'>0$ such that $B(0, R')$ contains $\bigcup_{k=1}^\ell B(y_k, r)$ and set $R=\frac{R'\alpha}{r}.$ Consider also the sequence of maps:
\be \psi_n:\B(p, Rn^{-1/d})\xrightarrow{\mathrm{exp}_p^{-1}}B_{T_pM}(0, Rn^{-1/d})\xrightarrow{\frac{r}{\alpha}n^{1/d}}B_{T_pM}(0, R')\simeq B(0, R'). \ee
Proposition \ref{prop:stability} implies that there exists $\epsilon_0>0$ and $n_0$ such that if $\|\tilde{y}_k-y_k\|\leq \epsilon_0$ then for $n\geq n_0$ the two complexes $\bigcup_{k=1}^{\ell}B(y_k, r)$ and $\bigcup_{k=1}^\ell \B(\varphi_n(\tilde y_k), \alpha n^{-1/d})$ are homotopy equivalent. 

We are interested in the the event:
\be E_{n}=\left\{\exists I_\ell\in\genfrac{\{ }{\}}{0pt}{}{n}{\ell}\,:\, \forall j\in I_\ell \quad p_{j}\in \psi_n^{-1}(B(y_j, \epsilon)), \text{ and} \quad \forall j\notin I_\ell \quad p_j\in \B(p, (R+\alpha)n^{-1/d})^c\right\}.\ee

Observe that if $E_n$ verifies, then $\U_n\cap \B(p, Rn^{-1/d})\simeq\Pc$: in fact, since there is no other point in $\B(p, (R+\alpha)n^{-1/d})$ other than $\{p_j\}_j\in I_{\ell}$, then the complex $\U_n$ is the disjoint union of the two complexes $\U_n\cap \B(p, R n^{-1/d})$
 and $\U_n\cap\B(p, (R+\alpha)n^{-1/d})^c$; the complex $\U_n\cap \B(p, R n^{-1/d})\simeq \Pc$ by Proposition \ref{prop:stability}. 

It is therefore enough to estimate from below the probability of $E_n$. Note that for every measurable subset $B\subset B(0, R')$ there exists a constant $c_B>0$ such that $\Vol\left(\psi_n^{-1}(B)\right)\geq \frac{c_B}{n}$. In particular, using the independence of the points in $U_n$, we can estimate
\begin{align}\PP(E_n)&=\binom{n}{\ell}\PP\left\{ \forall j\leq \ell: \,p_j\in \psi_n^{-1}(B(p_j, \epsilon))\quad\mathrm{and}\quad \forall j\geq \ell+1:\, p_j\in \B(p, (R+\alpha)n^{-1/d})^c \right\}\\
&=\binom{n}{\ell}\left(\prod_{j=1}^\ell\mathrm{vol}\left(\psi_n^{-1}(B(y_j, \epsilon))\right)\right) \left(\mathrm{vol}(\B(p, (R+\alpha)n^{-1/d})^c)\right)^{n-\ell}\\
&\geq \binom{n}{\ell}\left(\frac{c_1}{n}\right)^{\ell}\left(1-\frac{c_2}{n}\right)^{n-\ell}\xrightarrow{n\to \infty}\frac{c_1^\ell}{\ell !}(1-c_2)^{-\ell}e^{-c_2}.
\end{align}
In particular there exists $c>0$ such that:
\be \PP\left\{\U_n\cap \B(p, Rn^{-1/d}))\simeq \Pc\right\} \geq \PP(E_n)> c,\ee
and this concludes the proof.
\end{proof}

\bibliographystyle{abbrv}
\bibliography{Cech}

\vspace{0.1in}

{\em

Antonio Auffinger

Northwestern University, Department of Mathematics

Evanston, IL 60208}

\vspace{0.1in}

{\em

Antonio Lerario

SISSA, Mathematics

Trieste, Italy}

\vspace{0.1in}

{\em

Erik Lundberg

Florida Atlantic University, Department of Mathematical Sciences

Boca Raton, FL 33431

email: elundber@fau.edu
}

\end{document}